    \newcommand{\BA}{{\mathbb {A}}} 
    \newcommand{\BC}{{\mathbb {C}}} 
     \newcommand{\BF}{{\mathbb {F}}}
     \newcommand{\BP}{{\mathbb {P}}}
    \newcommand{\BQ}{{\mathbb {Q}}} \newcommand{\BR}{{\mathbb {R}}}
     \newcommand{\BZ}{{\mathbb {Z}}}
     \newcommand{\CH}{{\mathcal {H}}}
    \newcommand{\CO}{{\mathcal {O}}}
     \newcommand{\RH}{{\mathrm {H}}}
    \newcommand{\ab}{{\mathrm{ab}}}
    \newcommand{\alg}{{\mathrm{alg}}}
     \newcommand{\an}{{\mathrm{an}}}
    \newcommand{\Gal}{{\mathrm{Gal}}} \newcommand{\GL}{{\mathrm{GL}}}
    \renewcommand{\Im}{{\mathrm{Im}}}
    \newcommand{\Jac}{{\mathrm{Jac}}}\newcommand{\Ker}{{\mathrm{Ker}}}
    \newcommand{\ord}{{\mathrm{ord}}} \newcommand{\rk}{{\mathrm{rank}}}
    \renewcommand{\mod}{\ \mathrm{mod}\ }
    \newcommand{\Sel}{{\mathrm{Sel}}}
    \newcommand{\sgn}{{\mathrm{sgn}}}
    \newcommand{\tr}{{\mathrm{tr}}}\newcommand{\tor}{{\mathrm{tor}}}
    \newcommand{\ur}{{\mathrm{ur}}}
        \newcommand{\sN}{\mathscr{N}}
\DeclareFontFamily{U}{wncy}{}
\DeclareFontShape{U}{wncy}{m}{n}{<->wncyr10}{}
\DeclareSymbolFont{mcy}{U}{wncy}{m}{n}
\DeclareMathSymbol{\Sha}{\mathord}{mcy}{"58}
    \newcommand{\wt}{\widetilde}
    \newcommand{\wh}{\widehat}
    \newcommand{\ov}{\overline}
    \newcommand{\ra}{\rightarrow}
\newcommand{\Cor}[1]{}
    \theoremstyle{plain}
    \newtheorem{thm}{Theorem}[section] \newtheorem{coro}[thm]{Corollary}
    \newtheorem{lem}[thm]{Lemma}  \newtheorem{prop}[thm]{Proposition}
     \newtheorem{defn}[thm]{Definition}
\theoremstyle{remark} \newtheorem{remark}{Remark}[section]
\theoremstyle{remark} 
\theoremstyle{remark} 
    \numberwithin{equation}{section}
\begin{document}
\title[Generalized Birch's lemma]{Generalized Birch lemma and the 2-part of the Birch and Swinnerton-Dyer conjecture for certain elliptic curves}

\author[J. Shu]{Jie Shu}
\address{School of Mathematical Sciences, Tongji University, Shanghai 200092,  P. R. China}
\email{shujie@tongji.edu.cn}

\author[S. Zhai]{Shuai Zhai}
\address{Department of Pure Mathematics and Mathematical Statistics, University of Cambridge, Cambridge CB3 0WB, UK.}
\email{S.Zhai@dpmms.cam.ac.uk}

\thanks{Jie Shu is supported by NSFC-11701092.}

\begin{abstract}
In the present paper, we generalize the celebrated classical lemma of Birch and Heegner on quadratic twists of elliptic curves over $\BQ$. We prove the existence of explicit infinite families of quadratic twists with analytic ranks $0$ and $1$ for a large class of elliptic curves, and use Heegner points to explicitly construct rational points of infinite order on the twists of rank $1$. In addition, we show that these families of quadratic twists satisfy the $2$-part of the Birch and Swinnerton-Dyer conjecture when the original curve does. We also prove a new result in the direction of the Goldfeld conjecture.
\end{abstract}

\subjclass[2010]{Primary 11G05}

\maketitle

\tableofcontents

\section{Introduction}
Let $E$ be an elliptic curve defined over $\BQ$, and let $N$ be the  conductor of $E$. Then $E$ is modular by the theorem of Wiles et al \cite{Wiles95} \cite{BCDT01}, and we let $f: X_0(N) \ra E$ be an optimal modular parametrization sending the cusp at infinity, which is denoted by $[\infty]$, to the zero element of $E$. We write $[0]$ for the cusp of $X_0(N)$ arising from the zero point in $\BP^1(\BQ)$ under the complex uniformization of $X_0(N)$.  By the theorem of Manin--Drinfeld, $f([0])$ is a torsion point in $E(\BQ)$.  For any square-free integer $D \neq 1$, we write $E^{(D)}$ for the twist of $E$ by the extension $\BQ(\sqrt{D})/\BQ$, and $L(E^{(D)}, s)$ for its complex $L$-series. We assume throughout this paper that the group $E[2](\BQ)$ of rational 2-division points on $E$ is cyclic of order 2, and we define the elliptic curve $E'/\BQ$ to be the quotient curve
$E' = E/E[2](\BQ)$. In all that follows, we will make the following assumption:
\[E[2](\BQ)=E'[2](\BQ)=\BZ/2\BZ.\leqno{\mathrm{(\bf{Tor})}}\]
Thus  $\BQ(E[2])$ and $\BQ(E'[2])$ are quadratic extensions of $\BQ$.  We also remark that it is proven in Proposition \ref {Hecke4} that, for any elliptic curve $E$ over $\BQ$ with $f([0])\not\in 2E(\BQ)$,  condition $(\mathrm{Tor})$ holds for $E$ if and only if there exists an odd prime $q$ of good reduction for $E$ such that 
\begin{equation}\label{hecke}
a_{q}\equiv 1-\left(\frac{-1}{q}\right)\mod 4,
\end{equation} 
where  $a_q$ is the trace of Frobenius on the reduction of $E$ modulo $q$.
We will use the following explicit set of primes, which, by Chebotarev's theorem, has positive density in the set of all primes.
\begin{defn}\label{def1}
A prime $q$ is \textbf{admissible} for $E$ if $(q,2N)=1$ and $q$ is inert in both of the quadratic fields $\BQ(E[2])$ and $\BQ(E'[2])$.
\end{defn}

\noindent Note also that, assuming that our elliptic curve $E$ satisfies condition $(\mathrm{Tor})$, we prove in Proposition \ref{Hecke3} that an odd prime $q$ of good reduction is admissible for $E$ in the above sense if and only if it satisfies \eqref{hecke}.

In what follows, $p$ will always denote a prime  $>3$ such that $p \equiv 3 \mod 4$, and we will write  $K=\BQ(\sqrt{-p})$.  We say that $p$ satisfies \textbf{\emph{Heegner hypothesis}} for $(E,K)$ if every prime $\ell$ dividing $N$ splits in the field $K$. For any odd prime $q$, put $q^*=\left(\frac{-1}{q}\right)q$, where $\left({\frac {\cdot}{\cdot}}\right)$ denotes the Legendre symbol. We shall prove the following result, which generalizes an old lemma of Birch \cite{Birch70}.
\begin{thm}\label{main1}
Let $E$ be an elliptic curve over $\BQ$ satisfying $f([0])\not\in 2E(\BQ)$ and Condition $(\mathrm{Tor})$. Let $p>3$ be any prime $\equiv 3 \mod 4$ satisfying the Heegner hypothesis for $(E,K)$, where $K=\BQ(\sqrt{-p})$. For any integer $r\geq 0$, let $q_1, \ldots, q_r$ be distinct admissible primes which are not equal to  $p$, and put $M=q_1^* q_2^* \cdots q_r^*$. Then we have
$$
\ord_{s=1}L(E^{(M)}, s)=\rk\  E^{(M)}(\BQ)=0,  \text{ \ and \ } 
\ord_{s=1}L(E^{(-pM)}, s)=\rk\  E^{(-pM)}(\BQ)=1.
$$
Moreover, the Shafarevich--Tate groups $\Sha(E^{(M)})$ and $\Sha(E^{(-pM)})$ are both finite.
\end{thm}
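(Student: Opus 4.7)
The plan is to combine Heegner point methods on $X_0(N)$ with a careful 2-adic analysis driven by the admissibility condition, proceeding by a parallel induction on $r$. As a preliminary, I would verify that the global root numbers of $L(E^{(M)}, s)$ and $L(E^{(-pM)}, s)$ are $+1$ and $-1$ respectively: the Heegner hypothesis and the condition $p \equiv 3 \mod 4$ handle the archimedean and ramified-at-$K$ contributions, while the Hecke congruence of Proposition \ref{Hecke3} determines the sign at each admissible prime $q_i$. Thus $L(E^{(M)}, s)$ should vanish to even order and $L(E^{(-pM)}, s)$ to odd order at $s=1$, which is consistent with the targets $\rk = 0$ and $\rk = 1$.

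For the twist $E^{(M)}$, I would prove $L(E^{(M)}, 1) \neq 0$ by expressing $L(E^{(M)}, 1)/\Omega_{E^{(M)}}$ as an explicit finite sum of modular symbols attached to $f$, via a Waldspurger--Gross style formula. The hypothesis $f([0]) \notin 2E(\BQ)$ furnishes the base case $r = 0$, giving that the algebraic $L$-value is a $2$-adic unit, while the inductive step of adjoining an admissible prime $q_i$ is controlled by the congruence $a_{q_i} \equiv 1 - \left(\frac{-1}{q_i}\right) \mod 4$, which forces $T_{q_i}$ to act as the identity modulo $2$ on the relevant modular symbol space. Once $L(E^{(M)}, 1) \neq 0$ is established, the theorem of Gross--Zagier--Kolyvagin provides $\rk E^{(M)}(\BQ) = 0$ and finiteness of $\Sha(E^{(M)})$.

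For $E^{(-pM)}$, I would work with the Heegner point $y_n \in E(H_n)$ attached to the order $\cO_n$ of conductor $n = q_1 q_2 \cdots q_r$ in $K$, where $H_n$ denotes the associated ring class field. By genus theory, the quadratic character $\chi_M$ of $\Gal(H_n/K)$ corresponding to the twist by $M$ yields a trace-like combination
\[
P := \sum_{\sigma \in \Gal(H_n/K)} \chi_M(\sigma)\, y_n^{\sigma} \in E^{(-pM)}(\BQ).
\]
I would then prove that $P$ is non-torsion by a parallel induction on $r$, using the Euler system norm relations $\mathrm{Tr}_{H_{n q_i}/H_n}(y_{n q_i}) = a_{q_i}\, y_n$ together with the admissibility congruence to lift the classical Birch lemma (the case $r = 0$, where the Heegner point of discriminant $-p$ gives a non-torsion point on $E^{(-p)}(\BQ)$) along the tower. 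The Gross--Zagier formula then gives $L'(E^{(-pM)}, 1) \neq 0$, and Kolyvagin's theorem yields $\rk E^{(-pM)}(\BQ) = 1$ and finiteness of $\Sha(E^{(-pM)})$.

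The hard part will be the coordinated 2-adic non-vanishing in the two inductive steps above: one must carefully track how the modular parametrization $f$ and the Heegner points transform under twisting and under Hecke operators at admissible primes, modulo $2$ and modulo $4$. The admissibility condition has been calibrated precisely so that these congruences fit together, but executing the induction rigorously requires an analysis of the mod-$2$ Galois representation of $E$ on an appropriate subgroup of $J_0(N)$, together with a careful choice of integral models so that the modular symbols and the Heegner points can be compared across different levels in a way compatible with the mod-$2$ reduction of $f([0])$.
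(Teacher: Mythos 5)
Your outline captures the broad strategy---Heegner points on $X_0(N)$ twisted by genus characters, an induction on $r$ keyed to the admissibility congruence, and Gross--Zagier--Kolyvagin to close---but it omits the mechanism that drives the non-triviality induction, and one logical step is incomplete as stated. The engine of the paper's proof is the Atkin--Lehner identity of Theorem \ref{genus}, $z_0^{\sigma_{t_0}} + \overline{z_0} = [H_M:H_0]\,f([0])$, and its twisted analogue (Proposition \ref{relation}): this is precisely where $f([0]) \notin 2E(\BQ)$ enters, by making the $2$-primary part of the right-hand side nontrivial when $[H_M:H_0]$ is odd. Without this identity you have no handle on why your point $P$ should be non-torsion. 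Note also that the norm relation you quote, $\mathrm{Tr}_{H_{nq}/H_n}(y_{nq}) = a_q\, y_n$, holds only when $q$ is inert in $K$; for split $q$ it reads $(a_q - \sigma_v - \sigma_{\bar v})\,y_n$, and these extra Frobenius terms interact nontrivially with the genus character in the induction.

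The real novelty of the paper is exactly where your sketch is weakest. Prior work (\cite{CLTZ15}, \cite{CLW16}) imposes additional congruences on each $q_i$ so that the torsion term $[H_M:H_0]\,f([0])$ keeps a nontrivial $2$-primary part at every stage. The present paper drops these constraints: when some $q_i$ is of ``second kind'' (Definition \ref{prime-class}), that $2$-primary part vanishes (Corollary \ref{split}), and the induction in Theorem \ref{divisibility} has an explicit case split to absorb this. Your assertion that admissibility alone ``forces $T_{q_i}$ to act as the identity modulo $2$'' bypasses this dichotomy, and the induction as you describe it would not close when a second-kind prime appears. Finally, the step ``the Gross--Zagier formula then gives $L'(E^{(-pM)},1)\neq 0$'' needs the Rankin--Selberg decomposition $L(\pi\times\pi_{\chi_M},s)=L(E^{(M)},s)L(E^{(-pM)},s)$: $P$ controls $L'(\pi\times\pi_{\chi_M},1)$, not $L'(E^{(-pM)},1)$ directly, so you must combine the height of $P$ with the non-vanishing of $L(E^{(M)},1)$ to isolate the desired derivative. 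In the paper this decomposition together with root-number parity is what yields both order statements from the single fact that $z_M$ is non-torsion---which is why no separate modular-symbol computation for $E^{(M)}$ appears inside the proof of Theorem \ref{main1} itself.
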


In view of the above remarks, we immediately obtain the following corollary.
\begin{coro}\label{cor1}
Let $E$ be any elliptic curve over $\BQ$ such that $f([0])\not\in 2E(\BQ)$, and there exists an odd prime $q$ of good reduction for $E$ satisfying \eqref{hecke}.
Then, for any integer $r\geq 1$, there exist infinitely many square-free integers $M$ and $M'$, having exactly $r$ prime factors, such that $L(E^{(M)}, s)$ does not vanish at $s=1$, and $L(E^{(M')}, s)$ has a zero of order $1$ at $s=1$.
\end{coro}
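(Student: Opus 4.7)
The plan is to deduce Corollary \ref{cor1} as essentially a bookkeeping consequence of Theorem \ref{main1}, using Chebotarev's density theorem to produce admissible primes and Dirichlet's theorem to produce a suitable Heegner discriminant.

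First I would convert the hypothesis into condition $(\mathrm{Tor})$. The discussion preceding Definition \ref{def1} attributes to Proposition \ref{Hecke4} the equivalence, for curves $E$ with $f([0]) \notin 2E(\BQ)$, between $(\mathrm{Tor})$ and the existence of an odd prime $q$ of good reduction satisfying \eqref{hecke}. Thus the hypothesis of the corollary already delivers $(\mathrm{Tor})$, and $E$ is eligible for Theorem \ref{main1}. Next I would fix a prime $p > 3$ with $p \equiv 3 \mod 4$ satisfying the Heegner hypothesis for $(E, K)$ with $K = \BQ(\sqrt{-p})$. The Heegner hypothesis is a splitting condition at each prime $\ell \mid N$; for odd $\ell$ it amounts to $\left(\frac{-p}{\ell}\right) = 1$, while if $2 \mid N$ it reduces to a condition on $p$ modulo $8$. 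Together with $p \equiv 3 \mod 4$, these translate into a non-empty congruence condition on $p$ modulo $4N$ (for instance, $p \equiv 7 \mod 8$ combined with $p \equiv -1 \mod \ell$ for each odd $\ell \mid N$ is compatible), so Dirichlet's theorem furnishes infinitely many such $p$; fix one.

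Finally I would harvest the two families. Since admissibility (Definition \ref{def1}) is a Chebotarev-type splitting condition in the quadratic fields $\BQ(E[2])$ and $\BQ(E'[2])$, the set of admissible primes has positive density and is in particular infinite. For the family of $M$ with exactly $r$ prime factors, I would pick any $r$ distinct admissible primes $q_1, \ldots, q_r$ all different from $p$ and set $M = q_1^* \cdots q_r^*$; Theorem \ref{main1} then gives $L(E^{(M)}, 1) \neq 0$, and varying the choice of admissibles produces infinitely many distinct $M$. For the family of $M'$ with exactly $r$ prime factors, I would instead apply Theorem \ref{main1} with parameter $r - 1$: pick $r - 1$ admissible primes $\neq p$, form $M = q_1^* \cdots q_{r-1}^*$, and set $M' = -pM$; this is square-free with exactly $r$ prime factors (the prime $p$ together with the $r-1$ admissible ones), and Theorem \ref{main1} provides the simple zero of $L(E^{(M')}, s)$ at $s = 1$. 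Varying the admissibles again yields infinitely many such $M'$.

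The hard part is not in this corollary but in Theorem \ref{main1} itself; here the only substantive point is verifying compatibility of the various congruence classes (non-emptiness of the Dirichlet and Chebotarev sets), which is immediate from the explicit choices above.
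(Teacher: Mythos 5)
Your proof is essentially the argument the paper intends when it states that the corollary follows ``in view of the above remarks,'' and it is correct modulo one small oversight in the $r = 1$ case. For the $M'$-family you write $M' = -pM$ with $M$ a product of $r-1$ admissible $q_i^*$ and then propose to ``vary the admissibles'' to get infinitely many $M'$; but when $r = 1$ this product is empty, so $M' = -p$, and since you have fixed $p$ at the outset you obtain only a single integer. The fix is already available inside your own argument: you showed via Dirichlet that infinitely many primes $p > 3$ with $p \equiv 3 \bmod 4$ satisfy the Heegner hypothesis for $(E, K)$, so for $r = 1$ you should let $p$ range over this infinite set rather than fixing it. For $r \ge 2$ varying the admissible primes with $p$ fixed already produces infinitely many distinct $M'$, as you observe. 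Everything else --- the use of Proposition \ref{Hecke4} to upgrade \eqref{hecke} to condition $(\mathrm{Tor})$, the Dirichlet and Chebotarev nonemptiness checks, and the shift to parameter $r - 1$ so that both $M$ and $M'$ carry exactly $r$ prime factors --- is exactly right and matches the paper's intent.
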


\noindent Theorem \ref{main1} and Corollary \ref{cor1} improve the results in \cite[Theorem 1.1]{CLTZ15} and \cite[Theorem 1.1]{CLW16}, which were motivated by the remarkable progress on the congruent number problem made by Tian \cite{Tian14}. Note, however, that, in these two papers, much stronger  conditions are imposed on the prime factors of $M$ and $M'$. In particular, the generalizations of the Birch lemma in \cite[Corollary 2.6]{CLTZ15} and \cite[Theorem 1.1]{CLW16} require that the prime factors $q$ of $M$ and $M'$ satisfy  $q\equiv 1\mod 4$ and $a_q \equiv 0 \mod 2^{r+1}$, where, as above, $r$ denotes the number of prime factors of $M$ or $M'$. Theorem \ref{main1} and Corollary \ref{cor1}, can be applied to many elliptic curves, especially to elliptic curves without complex multiplication. We present a wide range of examples in Section \ref{eg}.  Note also that in Theorem \ref{main1}, $M$ will be negative precisely when the number of admissible primes $q\equiv 3\mod 4$ dividing $M$ is odd, whence $-pM$ will be positive. In some cases, this enables us to use Heegner points to construct rational points of infinite order on real quadratic twists of our elliptic curve $E$, as for example in Section 5.1, where $E$ is a Neumann--Setzer curve. 

\medskip

For the $2$-part of the Birch and Swinnerton-Dyer conjecture for the quadratic twists of $E$ occurring in Theorem \ref{main1}, we prove the following result (see Theorem \ref{2BSD}). 

\begin{thm}\label{main2}
Let $E$ and $M$ be as in Theorem \ref{main1}. Assume that  (i) the Manin constant of $E$ is odd, and (ii) every prime $\ell$ dividing $2N$ splits in both $K=\BQ(\sqrt{-p})$ and $\BQ(\sqrt{M})$. Then, if the $2$-part of the Birch and Swinnerton-Dyer conjecture holds for $E$, it also holds for both $E^{(M)}$ and $E^{(-pM)}$.
\end{thm}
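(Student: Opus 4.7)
The plan is to prove both assertions by induction on $r$, the number of admissible prime factors of $M$. The base case $r=0$ consists on the one hand of the assumed 2-part of BSD for $E = E^{(1)}$, and on the other of the analogous statement for $E^{(-p)}$; the latter is handled via the Heegner point $y_K \in E(K)$ supplied by the Heegner hypothesis on $p$, using the Gross--Zagier formula (to compute $L'(E^{(-p)}, 1)$ up to a rational factor in terms of $\widehat{h}(y_K)$) together with Kolyvagin's bound on $\Sha[2^\infty]$ via the index of $y_K$. For the inductive step, let $q = q_r$, set $M_0 = q_1^* \cdots q_{r-1}^*$ so that $M = M_0 \, q^*$, and compare the 2-adic valuation of each BSD invariant of $E^{(M)}$ with that of $E^{(M_0)}$ (and similarly for the $-p$-twisted family).

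Four ingredients need to be tracked. On the \emph{Selmer and Sha side}, I use the 2-isogeny $\phi : E \to E' = E/E[2](\BQ)$ of condition $(\mathrm{Tor})$ and its dual $\hat\phi$ to set up Cassels-type exact sequences computing $S^{(\phi)}(E^{(M)}/\BQ)$ and $S^{(\hat\phi)}(E'^{(M)}/\BQ)$; because $q$ is inert in both $\BQ(E[2])$ and $\BQ(E'[2])$, the local descent conditions at $q$ are maximally rigid, so that adjoining $q^*$ changes the Selmer sizes by an explicit factor. Combined with Theorem \ref{main1} (which pins the analytic rank and gives finiteness of $\Sha$), this determines $\ord_2|\Sha|$. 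On the \emph{Tamagawa side}, hypothesis (ii) that every $\ell \mid 2N$ splits in $\BQ(\sqrt{M})$ forces $E$ and $E^{(M)}$ to have identical Kodaira type at each $\ell \mid 2N$, so $c_\ell$ is unchanged there, while at each $q_i \mid M$ the twist acquires multiplicative reduction whose Tamagawa factor is computed directly. On the \emph{period side}, hypothesis (i) that the Manin constant is odd makes $\Omega(E^{(M)})/\Omega(E) = \sqrt{|M|}$ up to a 2-adic unit. Finally, on the \emph{$L$-value side}, the algebraic quantity $L(E^{(M)},1)/\Omega(E^{(M)})$ is controlled modulo powers of $2$ by a modular-symbol formula compatible with the $\phi$-descent, while for $E^{(-pM)}$ the quantity $L'(E^{(-pM)},1)/\Omega(E^{(-pM)})$ is obtained from the Gross--Zagier formula applied to the twisted Heegner point on the modular curve attached to $K$.

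The main obstacle is the 2-adic bookkeeping at the inductive step: I must verify that the increment produced by adjoining $q^*$ contributes the same 2-adic valuation to the arithmetic product $|\Sha| \cdot \prod_\ell c_\ell / |E(\BQ)_{\tor}|^2$ as it does to $L^{\mathrm{alg}}(\cdot, 1)/\Omega$ in the rank-$0$ branch, and analogously in the rank-$1$ branch with the regulator replaced by the height of the twisted Heegner point. This matching uses in an essential way the hypothesis $f([0]) \notin 2E(\BQ)$, which rules out a spurious $2$-divisible class coming from the Manin--Drinfel'd cusp, together with the sharp characterization of admissibility via the Hecke congruence \eqref{hecke} (Proposition \ref{Hecke3}), which makes the local descent computation at $q$ explicit and forces the arithmetic and analytic increments to agree. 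The rank-$1$ step carries the extra subtlety that the Heegner class lives naturally over $K$ and must be descended to $E^{(-pM)}(\BQ)$; the descent is controlled by the splitting conditions in (ii), which also guarantee compatibility of local signs so that the Heegner point lies in the correct isotypic component.
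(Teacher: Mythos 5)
Your high-level ingredient list (Cassels-type exact sequences for the $\phi$-Selmer groups, Tamagawa comparison, period normalization via the odd Manin constant, Gross--Zagier plus Kolyvagin for the rank-one twist) matches the paper's, but the organizing framework you propose --- an induction on $r$ comparing $E^{(M)}$ with $E^{(M_0)}$ where $M = M_0 q_r^*$ --- is not the paper's and in fact has a structural problem. Hypothesis~(ii), that every prime $\ell \mid 2N$ splits in $\BQ(\sqrt{M})$, is \emph{not} preserved when you pass from $M$ to $M_0$: the condition $\left(\tfrac{M}{\ell}\right)=1$ for all $\ell\mid 2N$ does not imply $\left(\tfrac{M_0}{\ell}\right)=1$ unless you also know $\left(\tfrac{q_r^*}{\ell}\right)=1$, which is not part of admissibility. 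So the inductive hypothesis you would need to invoke need not hold for $M_0$, and the induction collapses. The paper avoids this entirely by comparing $E^{(M)}$ directly with $E$ (Lemma \ref{Local}, Propositions \ref{SelmerM} and \ref{Selmer-pM}) via a place-by-place comparison of local Kummer images, using hypothesis (ii) only for $\ell\mid 2N$ and Klagsbrun's local lemmas for $\ell\nmid 2N$; the only genuinely inductive argument in the paper is a \emph{separate} induction (Theorem \ref{divisibility}) that pins down the exact $2$-divisibility $z_M \in 2^{r-1}E(\BQ(\sqrt{-pM}))^-$ of the Heegner point, built on the genus Heegner identity $z_0^{\sigma_{t_0}}+\overline{z_0}=T$ and Euler system norm relations --- not an induction on the BSD formula itself. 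Without that precise Heegner divisibility, your ``$2$-adic bookkeeping'' paragraph remains aspirational: you correctly identify it as the main obstacle but supply no mechanism for it.

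A smaller but concrete error: at each $q_i \mid M$ the twist $E^{(M)}$ acquires \emph{additive}, not multiplicative, reduction (twisting good reduction by a ramified quadratic extension at $q_i$ yields type $\mathrm{I}_0^*$); the paper's Proposition \ref{Tamagawa} computes $c_{q_i}(E^{(M)}) = 2$ or $4$ accordingly, distinguishing the two cases by whether $q_i$ is inert or split in $\BQ(E[2])$. Your stated claim would give the wrong Tamagawa contribution and would not reproduce the exponent $r$ that must match the analytic side $\ord_2\bigl(L^{\alg}(E^{(M)},1)\bigr) = r-1$.
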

In Section 5.3, we give infinitely many examples of elliptic curves over $\BQ$, without complex multiplication, which satisfy the full Birch and Swinnerton-Dyer conjecture, by combining this result with a recent theorem of Wan \cite{Wannt} on the $p$-part of the Birch and Swinnerton-Dyer conjecture for primes $p > 2$, whose proof uses deep methods from Iwasawa theory.

We now present another application of Theorem \ref{main1}. In 1979, Goldfeld \cite{Goldfeld79} conjectured that, for every elliptic curve defined over $\BQ$, amongst the set of all its quadratic twists with root number $+1$, there is a subset of density one where the central $L$-value of the twist is non-zero, and amongst the set of all its quadratic twists with root number $-1$, there is a subset of density one where the central $L$-value of the twist has a zero of order equal to 1. For an elliptic curve $E$ over $\BQ$, define
$$
N_{r}(E,X) = \#\{\text{square free }D \in \BZ: \lvert D \rvert \leq X, \ \ord_{s=1} L(E^{(D)},s)=r\},
$$ 
where $r=0, 1$. We prove the following result.
\begin{thm}\label{main3}
Let $E$ be an elliptic curve over $\BQ$ satisfying $f([0])\not\in 2E(\BQ)$, and Condition $(\mathrm{Tor})$. Then, as $X \to \infty$, we have 
$$
N_{r}(E,X) \gg \frac{X}{\log^{3/4}X}.
$$
\end{thm}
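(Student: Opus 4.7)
The strategy is to use Theorem \ref{main1} to produce explicit infinite families of twists of $E$ with analytic ranks $0$ and $1$, and then to lower-bound the resulting count by a standard multiplicative sieve. First I fix once and for all an auxiliary prime $p > 3$ with $p \equiv 3 \pmod 4$ satisfying the Heegner hypothesis for $(E, \BQ(\sqrt{-p}))$. Such a prime exists (in fact infinitely many): the Heegner condition amounts to requiring $\left(\frac{-p}{\ell}\right) = 1$ for each odd $\ell \mid N$, together with $-p \equiv 1 \pmod 8$ if $2 \mid N$, and by quadratic reciprocity these are all congruence conditions on $p$ modulo $8N$ that are compatible with $p \equiv 3 \pmod 4$, so Dirichlet's theorem furnishes such a prime. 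With $p$ fixed, Theorem \ref{main1} guarantees that for every square-free integer $M = q_1^* \cdots q_k^*$ built from pairwise distinct admissible primes $q_i \neq p$, the twist $E^{(M)}$ has analytic rank $0$ and $E^{(-pM)}$ has analytic rank $1$.

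Next, I determine the density of admissible primes. By $(\mathrm{Tor})$, both $\BQ(E[2])$ and $\BQ(E'[2])$ are quadratic fields; let $L$ be their compositum. Admissibility is the Chebotarev condition that $\Frob_q$ is the non-trivial element of $\Gal(\BQ(E[2])/\BQ)$ \emph{and} the non-trivial element of $\Gal(\BQ(E'[2])/\BQ)$, so if $\Gal(L/\BQ) \cong \BZ/2\BZ$ (the two fields coincide) the density is $1/2$, while if $\Gal(L/\BQ) \cong (\BZ/2\BZ)^2$ (they are linearly disjoint) it is $1/4$. In either case the natural density $\delta$ of admissible primes satisfies $\delta \geq 1/4$.

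Finally, let $S$ be the set of admissible primes different from $p$, of density $\delta \geq 1/4$. A classical theorem of Landau (extended by Wirsing, and obtainable from the Selberg--Delange method) gives, for any set of primes $S$ of natural density $\delta > 0$, the lower bound
\[
A(Y) := \#\bigl\{ n \leq Y : \mu^2(n) = 1,\ \ell \mid n \Rightarrow \ell \in S \bigr\} \gg \frac{Y}{(\log Y)^{1 - \delta}} \geq \frac{Y}{(\log Y)^{3/4}}.
\]
The map $n = q_1 \cdots q_k \mapsto M = q_1^* \cdots q_k^* = \pm n$ is injective (since $|M| = n$) and produces distinct square-free $M$ with $|M| \leq Y$; by Theorem \ref{main1}, each such $M$ yields a twist $E^{(M)}$ of analytic rank $0$ and a twist $E^{(-pM)}$ of analytic rank $1$. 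Taking $Y = X$ therefore gives $N_0(E, X) \gg A(X) \gg X/(\log X)^{3/4}$, and taking $Y = X/p$ together with the assignment $D = -pM$ (so that $|D| = pn \leq X$) gives $N_1(E, X) \gg A(X/p) \gg X/(\log X)^{3/4}$. The main work is hidden in Theorem \ref{main1}; the only additional ingredient is the Landau--Wirsing estimate on square-free integers supported on a Chebotarev set of positive density, which is standard analytic number theory.
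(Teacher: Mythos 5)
Your proof is correct and follows essentially the same route as the paper: fix an auxiliary Heegner prime $p$, invoke Theorem~\ref{main1}, and count square-free integers supported on the admissible primes via a Landau--Wirsing/Selberg--Delange type estimate. One small refinement you could have used: Proposition~\ref{Delta} shows $\Delta_E<0$ and $\Delta_{E'}>0$, so $\BQ(E[2])$ and $\BQ(E'[2])$ are respectively imaginary and real quadratic, hence always distinct, giving admissible density exactly $1/4$ rather than your hedged $\delta\geq 1/4$ (though your bound suffices, and your appeal to Landau/Wirsing is arguably a more accurate attribution than the paper's ``Ikehara Tauberian theorem,'' which in its basic form handles simple poles rather than the branch-point singularity $(s-1)^{-1/4}$ arising here).
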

\noindent Previously, for any given elliptic curve $E$ over $\BQ$, Ono and Skinner \cite{OS98} showed that 
$$
N_{0}(E,X)\gg \frac{X}{\log X},
$$
and later Ono \cite{Ono01} proved that, in particular for $E$ without a rational $2$-torsion point, 
$$
N_{0}(E,X)\gg \frac{X}{\left(\log X\right)^{1-\alpha}}
$$ 
for some $0<\alpha<1$. Perelli and Pomykala \cite{PP97} proved that $N_{1}(E,X)\gg X^{1-\varepsilon}$ for every $\varepsilon>0$. Much progress for various families of elliptic curves has been made towards Goldfeld's conjecture. For example, see the early survey article by Silverberg \cite{Silverberg07}, and very recent results in \cite{Smithnt} and \cite{KL19}. However, Theorem \ref{main3} improves previous results when $E$ satisfies $f([0])\not\in 2E(\BQ)$ and Condition $(\mathrm{Tor})$. 
The proof of Theorem \ref{main3} is a straightforward consequence of Theorem \ref{main1}. Indeed, we just count the number of integers $M$ appearing in Theorem \ref{main1}, say
$$
S_0(X) :=\sum_{|M| \leq X}1,
$$ 
where $M = q_1^* \cdots q_r^*$ as in Theorem \ref{main1}. In view of Proposition \ref{Hecke3}, the Chebotarev density theorem shows that the admissible primes have natural density $1/4$ in the set of all primes. Applying the Ikehara Tauberian theorem, it follows that 
$$
S_0(X) \sim c_0 X/ (\log X)^{1-1/4} \sim c_0 X /(\log X)^{3/4},
$$
where $c_0$ is a constant. Then Theorem \ref{main3} follows immediately since $N_{r}(E,X) \gg S_0(X)$.

\bigskip

\section{The arithmetic of the elliptic curve $E$}
 Throughout the rest of the paper, we shall always assume that  $E$ is an elliptic curve over $\BQ$ satisfying $f([0])\not\in 2E(\BQ)$ and Condition $(\mathrm{Tor})$. In this section, we establish some of the basic arithmetic properties of these curves.

\subsection{Condition $(\mathrm{Tor})$ and admissible primes}
Under Condition $\mathrm{(Tor)}$, the quotient map $\phi:E\ra E'$ is an isogeny over $\BQ$ of degree $2$. Let $\phi': E' \to E$ be its dual isogeny.
\begin{prop}\label{tor}
Under Condition $\mathrm{(Tor)}$, we have $E[2^\infty](\BQ)=E[2](\BQ)=\BZ/2\BZ$. 
\end{prop}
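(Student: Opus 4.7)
The plan is to reduce the claim to showing that $E(\BQ)$ has no point of order $4$; a routine induction then extends this to all $2$-power torsion. Indeed, if there were $R \in E[2^n](\BQ)\setminus E[2^{n-1}](\BQ)$ with $n\ge 2$ minimal, then $2R$ would be a rational point of exact order $2^{n-1}$, forcing $n=2$ by minimality. So I need only rule out the existence of a rational $Q$ with $2Q = P$, where $P$ denotes the unique non-trivial rational $2$-torsion point of $E$.

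Suppose such a $Q$ exists. Using the $\BQ$-isogeny $\phi\colon E\to E'$ with $\ker \phi = \langle P\rangle$, I will exhibit two distinct non-trivial rational $2$-torsion points of $E'$, contradicting Condition $(\mathrm{Tor})$ applied to $E'$. The first is $\phi(Q)$: one has $2\phi(Q)=\phi(P)=0$, and $\phi(Q)\ne 0$ because $Q$ has order $4$ and cannot lie in $\langle P\rangle$. The second is $\phi(P_2)$, where $P_2\in E[2](\ov\BQ)$ is one of the two non-rational $2$-torsion points: it is non-zero since $P_2\notin\langle P\rangle$, and for any $\sigma\in \Gal(\ov\BQ/\BQ)$ one has $\sigma(P_2)\in\{P_2,\,P+P_2\}$ (as $\sigma$ permutes $E[2]$ fixing $0$ and $P$), so $\phi(\sigma P_2)=\phi(P_2)$ using $\phi(P)=0$. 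Hence $\phi(P_2)\in E'(\BQ)$.

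The only point requiring care --- and the one place where Condition $(\mathrm{Tor})$ for $E$ itself enters --- is verifying that $\phi(Q)\ne \phi(P_2)$. Were they equal, then $Q - P_2\in\ker\phi=\langle P\rangle$ would render $P_2$ a $\BZ$-combination of the rational points $Q$ and $P$, hence rational, contradicting $E[2](\BQ)=\langle P\rangle$. So $E'[2](\BQ)$ contains at least three non-trivial elements and therefore has order $4$, yielding the desired contradiction. I do not anticipate any serious obstacle beyond this check; the argument is essentially a short diagram-chase with the $2$-isogeny $\phi$ and its interaction with the Galois action on $E[2]$.
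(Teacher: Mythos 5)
Your proof is correct and follows essentially the same route as the paper's: assume a rational point of order $4$ exists, push it and a non-rational $2$-torsion point through the $2$-isogeny $\phi$, and derive that $E'[2](\BQ)$ has order $4$, contradicting $(\mathrm{Tor})$ for $E'$. The only cosmetic difference is that you verify rationality of $\phi(P_2)$ by directly tracking the Galois action on $E[2]$, whereas the paper observes that $\phi(P_2)$ lies in $\ker\phi'$, which is a rational order-$2$ subgroup; your explicit reduction from $2^\infty$-torsion to order-$4$ torsion is also a welcome addition that the paper leaves implicit.
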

\begin{proof}
Suppose on the contrary that there is a point $P_1$ in $E(\BQ)$ of exact order 4, it follows that the image of $P_1$ in $E'$, namely $\phi(P_1)$, is rational by the definition of $2$-isogeny. Since $2P_1$ generates $E[2](\BQ)$, we have $\phi(2P_1)=0$, but $P_1$ does not lie in the kernel of $\phi$, so $\phi(P_1)$ is of order $2$, and $\phi' \circ \phi(P_1)=2P_1$. Let  $P_2\in E[2]\backslash E[2](\BQ)$ be a point of order $2$. Then $\phi(P_2)$ must be rational of order $2$ since $\phi' \circ \phi(P_2)=0$. Thus, the images of $P_1$ and $P_2$ in $E'=E/E[2](\BQ)$ are different rational points of order $2$ and they generate $E'[2]$, that is, $E'[2](\BQ)=E'[2]=(\BZ/2\BZ)^2$, which does not satisfy Condition $\mathrm{(Tor)}$.
\end{proof}

Note $\BQ(E[2])=\BQ(\sqrt{\Delta_E})$ and $\BQ(E'[2])=\BQ(\sqrt{\Delta_{E'}})$ are quadratic extensions of $\BQ$, where $\Delta_E$ and $\Delta_{E'}$ are the discriminants of $E$ and $E'$, respectively.  For any prime $q$ of good reduction for $E$, let $a_q$ be the trace of the Frobenius on the reduction of $E$ mod  $q$.
\begin{prop}\label{Hecke3}
Let $q$ be a prime with $(q,2N)=1$. Under Condition $\mathrm{(Tor)}$, we have that
$$
a_q
\equiv
\left\{
\begin{array}{llll}
0 \ \mod 4 & \hbox{if $q \equiv 1 \mod 4$ and $q$ is inert in both $\BQ(\sqrt{\Delta_E})$ and $\BQ(\sqrt{\Delta_{E'}})$;} \\
0 \ \mod 4 & \hbox{if $q \equiv 3 \mod 4$ and $q$ splits in $\BQ(\sqrt{\Delta_E})$ or $\BQ(\sqrt{\Delta_{E'}})$;} \\
2 \ \mod 4 & \hbox{if $q \equiv 1 \mod 4$ and $q$ splits in $\BQ(\sqrt{\Delta_E})$ or $\BQ(\sqrt{\Delta_{E'}})$;} \\
2 \ \mod 4 & \hbox{if $q \equiv 3 \mod 4$ and $q$ is inert in both $\BQ(\sqrt{\Delta_E})$ and $\BQ(\sqrt{\Delta_{E'}})$.}
\end{array}
\right.
$$
In particular, $q$ is inert in both $\BQ(\sqrt{\Delta_E})$ and $\BQ(\sqrt{\Delta_{E'}})$, i.e., admissible for $E$ if and only if  

\begin{equation}\label{Hecke}
a_q\equiv 1- \left(\frac{-1}{q}\right) \mod 4.
\end{equation} 

\end{prop}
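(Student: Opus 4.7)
The plan is to reduce the congruence for $a_q\bmod 4$ to a question about $|E(\BF_q)|\bmod 4$ via $a_q = q+1-|E(\BF_q)|$, and then control the $2$-part of $|E(\BF_q)|$ using the $\BQ$-rational $2$-isogeny $\phi\colon E\to E'$ with kernel $E[2](\BQ)=\BZ/2\BZ$ introduced above.

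The first step is to show that $4\mid |E(\BF_q)|$ if and only if $\phi(E(\BF_q))$ contains a nontrivial $2$-torsion point of $E'$. The short exact sequence $0\to E[\phi]\to E\to E'\to 0$ of Galois modules over $\BF_q$, together with Lang's theorem ($H^1(\BF_q,E)=0$), the computation $|H^1(\BF_q,\BZ/2\BZ)|=2$, the equality $E[\phi](\BF_q)=\BZ/2\BZ$ forced by $(\mathrm{Tor})$, and the isogeny invariance $|E(\BF_q)|=|E'(\BF_q)|$, combine to give $|E(\BF_q)|=2\,|\phi(E(\BF_q))|$ and $[E'(\BF_q):\phi(E(\BF_q))]=2$. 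Hence $4\mid |E(\BF_q)|$ iff $|\phi(E(\BF_q))|$ is even iff $\phi(E(\BF_q))\cap E'[2](\BF_q)\neq\{O\}$.

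The second step is to determine when this intersection is nontrivial. Let $T'$ denote the generator of $E'[2](\BQ)$. Using $\phi'\circ\phi=[2]_E$ together with $\ker\phi'=\langle T'\rangle$ (which holds by $(\mathrm{Tor})$), one sees that $\phi^{-1}(T')\subset E[2]$, and in fact equals the pair $\{P_1,P_2\}$ of non-rational $2$-torsion points of $E$, which are defined precisely over $\BQ(E[2])=\BQ(\sqrt{\Delta_E})$. Since $q\nmid 2\Delta_E$, they lie in $E(\BF_q)$ if and only if $q$ splits in $\BQ(\sqrt{\Delta_E})$. Case split on $\BQ(\sqrt{\Delta_{E'}})$: if $q$ is inert there, then $E'[2](\BF_q)=\{O,T'\}$, so the intersection is nontrivial iff $T'\in\phi(E(\BF_q))$, iff $q$ splits in $\BQ(\sqrt{\Delta_E})$; if $q$ splits there, then $|E'[2](\BF_q)|=4$, and an index-$2$ subgroup of $E'(\BF_q)$ necessarily meets it nontrivially. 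Therefore $4\mid|E(\BF_q)|$ if and only if $q$ splits in at least one of $\BQ(\sqrt{\Delta_E})$ and $\BQ(\sqrt{\Delta_{E'}})$.

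Translating back through $a_q=q+1-|E(\BF_q)|$ and separating by the parity of $q\bmod 4$ produces the four tabulated congruences. The equivalence with \eqref{Hecke} is then immediate, since $1-\left(\tfrac{-1}{q}\right)$ equals $0\bmod 4$ when $q\equiv 1\bmod 4$ and $2\bmod 4$ when $q\equiv 3\bmod 4$, matching precisely the two rows in which $q$ is inert in both quadratic fields. The only delicate ingredient is the identification $\phi^{-1}(T')=\{P_1,P_2\}$ together with $\BQ(P_1,P_2)=\BQ(\sqrt{\Delta_E})$; granted this, the rest of the argument is bookkeeping on cosets and orders.
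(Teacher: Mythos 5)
Your proof is correct, and it takes a genuinely different (and more self-contained) route than the paper. The paper passes from $\BF_q$ to $\BQ_q$ via the formal group to get $E[4](\BQ_q)=E[4](\BF_q)$, handles the ``splits in one'' case by $|E(\BF_q)|=|E'(\BF_q)|$ and full $2$-torsion, and for the ``inert in both'' case cites an external lemma from \cite{Zhaint} that $E[4](\BF_q)=\BZ/2\BZ$. You instead work entirely over $\BF_q$ with the reduced $2$-isogeny: Lang's theorem gives $[E'(\BF_q):\phi(E(\BF_q))]=2$ exactly, and the question of whether $4\mid |E(\BF_q)|$ becomes the transparent one of whether the index-$2$ subgroup $\phi(E(\BF_q))$ meets $E'[2](\BF_q)$. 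The identification $\phi^{-1}(T')=\{P_1,P_2\}$ then reduces everything to the splitting behavior of $q$ in $\BQ(\sqrt{\Delta_E})$ and $\BQ(\sqrt{\Delta_{E'}})$, with both inert/split cases falling out of the same computation. What your approach buys is uniformity and independence from the external citation; what the paper's approach buys is brevity (by outsourcing the harder ``inert in both'' case to \cite{Zhaint}). One tiny point worth spelling out if you expand this: the kernel of $\phi'$ is a Galois-stable order-$2$ subgroup of $E'[2]$, hence (under $(\mathrm{Tor})$, which forces the fixed part of $E'[2]$ to be exactly $\BZ/2\BZ$) it must be $\langle T'\rangle$, which is what you use to get $\phi(P_i)=T'$.
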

\begin{proof}
The elliptic curve $E$ has good reduction at $q$. Now reduction modulo $q$, gives the exact sequence
\[0\ra \wh{E}(q\BZ_q)\ra E(\BQ_q)\ra E(\BF_q)\ra 0,\]
where $\wh{E}/\BZ_q$ denotes the formal group associated to $E/\BQ_q$.
Since $q>2$, by \cite[Chapter IV, Theorem 6.4 ]{Silvermanbook1}, multiplication by $4$ induces an isomorphism on $\wh{E}(q\BZ_q)$, and hence we have $E[4](\BQ_q)=E[4](\BF_q)$. If $q$ splits in $\BQ(\sqrt{\Delta_E})$ or $\BQ(\sqrt{\Delta_{E'}})$, then either 
$$
E[2](\BF_q)=E[2](\BQ_q)=(\BZ/2\BZ)^2
$$
or 
$$
E'[2](\BF_q)[2]=E'[2](\BQ_q)[2]=(\BZ/2\BZ)^2.
$$
Since $E$ is isogenous to $E'$, $|E(\BF_q)|=|E'(\BF_q)|$. In either case, $4$ divides $|E(\BF_q)|$. If $q$ is inert in both $\BQ(\sqrt{\Delta_E})$ and $\BQ(\sqrt{\Delta_{E'}})$, by \cite[Lemma 2.1]{Zhaint}, we have 
$$
E[4](\BF_q)=E[4](\BQ_q)=\BZ/2\BZ.
$$
In conclusion, we have 
$$
|E(\BF_q)| \equiv
\left\{
\begin{array}{ll}
2 \ \mod 4 & \hbox{if $q$ is inert in both $\BQ(\sqrt{\Delta_E})$ and $\BQ(\sqrt{\Delta_{E'}})$;} \\
0 \ \mod 4 & \hbox{if $q$ splits in $\BQ(\sqrt{\Delta_E})$ or $\BQ(\sqrt{\Delta_{E'}})$.}
\end{array}
\right.
$$
Since $q$ is an odd good prime for $E$, the assertion follows immediately from the injection $E(\BQ_q)[2^\infty]\hookrightarrow E(\BF_q)$ under reduction modulo $q$ and  the formula $a_q=q+1-|E(\BF_q)|$. 
\end{proof}

\begin{prop}\label{Hecke4}
If $f([0])\notin 2E(\BQ)$, then condition $\mathrm{(Tor)}$ is equivalent to the existence of an odd good prime $q$ with 
\begin{equation*}
a_q\equiv 1- \left(\frac{-1}{q}\right) \mod 4. 
\end{equation*} 
\end{prop}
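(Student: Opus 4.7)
The plan is to establish both directions separately. The forward implication follows almost directly from Proposition \ref{Hecke3} combined with Chebotarev. Under $(\mathrm{Tor})$, both $\BQ(\sqrt{\Delta_E})$ and $\BQ(\sqrt{\Delta_{E'}})$ are proper quadratic extensions of $\BQ$, so applying Chebotarev to their compositum produces a positive density of primes $q$ inert in each. Discarding the finitely many primes dividing $2N$, any such $q$ is admissible for $E$, and Proposition \ref{Hecke3} then yields $a_q \equiv 1 - \left(\frac{-1}{q}\right) \mod 4$.

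For the reverse implication, I would first extract from $f([0]) \notin 2E(\BQ)$ the fact that $E(\BQ)[2] \neq 0$. Since $f([0]) \in E(\BQ)_{\tor}$ by Manin--Drinfeld, writing $E(\BQ) = \BZ^r \oplus T$ with $T = E(\BQ)_{\tor}$ yields $E(\BQ)/2E(\BQ) \cong (\BZ/2\BZ)^r \oplus T/2T$, and for the finite abelian group $T$ one has $|T/2T| = |T[2]|$. If $T[2] = 0$, then multiplication by $2$ is an automorphism of $T$, forcing $f([0]) \in 2T \subset 2E(\BQ)$, a contradiction. Hence $E(\BQ)[2] \neq 0$.

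Now suppose for contradiction that $(\mathrm{Tor})$ fails while some odd good prime $q$ satisfies the displayed congruence. Since $E(\BQ)[2] \neq 0$, the failure of $(\mathrm{Tor})$ must take one of two forms: either $E[2](\BQ) = (\BZ/2\BZ)^2$, or $E[2](\BQ) = \BZ/2\BZ$ and $E'[2](\BQ) = (\BZ/2\BZ)^2$. In the first case, reduction modulo $q$ gives an injection $E(\BQ)[2] \hookrightarrow E(\BF_q)$, and so $4 \mid |E(\BF_q)|$; in the second, the same argument applied to $E'$, combined with $|E(\BF_q)| = |E'(\BF_q)|$ (since $E$ and $E'$ are isogenous over $\BQ$), again yields $4 \mid |E(\BF_q)|$. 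Either way $a_q = q + 1 - |E(\BF_q)| \equiv q + 1 \mod 4$. A direct case check on $q \mod 4$ then contradicts $a_q \equiv 1 - \left(\frac{-1}{q}\right) \mod 4$: if $q \equiv 1 \mod 4$, the right side is $0$ while $q + 1 \equiv 2$; if $q \equiv 3 \mod 4$, the right side is $2$ while $q + 1 \equiv 0$.

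The only delicate point is the deduction $E(\BQ)[2] \neq 0$ from $f([0]) \notin 2E(\BQ)$, which exploits the Manin--Drinfeld torsion property crucially; once that is in hand, the remainder reduces to the same mod $4$ arithmetic already used in the proof of Proposition \ref{Hecke3}.
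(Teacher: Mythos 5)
Your argument runs the paper's reverse direction in the contrapositive but otherwise uses the same ingredients: reduction modulo $q$, isogeny invariance of $|E(\BF_q)|$, and the mod $4$ arithmetic of $a_q$. The forward direction, via Chebotarev and Proposition \ref{Hecke3}, matches the paper. Your explicit derivation of $E(\BQ)[2]\neq 0$ from $f([0])\notin 2E(\BQ)$ is more careful than the paper's one-line assertion.

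However, there is a gap in the case analysis of the reverse direction. After establishing $E(\BQ)[2]\neq 0$, you claim that failure of $(\mathrm{Tor})$ forces either $E[2](\BQ)=(\BZ/2\BZ)^2$, or $E[2](\BQ)=\BZ/2\BZ$ together with $E'[2](\BQ)=(\BZ/2\BZ)^2$. But $E(\BQ)[2]\neq 0$ alone does not exclude the possibility $E[2](\BQ)=\BZ/2\BZ$ and $E'[2](\BQ)=0$, and in that case your reduction argument does not yield $4\mid |E(\BF_q)|$, so no contradiction follows. To close this hole you need the observation the paper makes explicitly: the kernel of the dual isogeny $\phi'\colon E'\to E$ is a $\Gal(\ov{\BQ}/\BQ)$-stable subgroup of order $2$, hence has trivial Galois action, so $\ker(\phi')\subset E'(\BQ)$ and $E'[2](\BQ)\neq 0$. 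Once this is inserted your case analysis is exhaustive and the mod $4$ computation finishes the proof exactly as written.
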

\begin{proof}
Under Condition $\mathrm{(Tor)}$, by Proposition \ref{Hecke3}, admissible primes are exactly those primes $q\nmid 2N$ satisfying (\ref{Hecke}).

Suppose $q\nmid 2N$ satisfies (\ref{Hecke}). Then
\[|E(\BF_q)|=q+1-a_q\equiv 2\mod 4,\]
which implies that both $|E[2^\infty](\BQ)|$ and $|E'[2^\infty](\BQ)|$ are less than or equal to $2$ for $q\nmid 2N$. Since $f([0])\notin 2E(\BQ)$, we have $E[2^\infty](\BQ)\neq 0$ and hence $E[2^\infty](\BQ)=\BZ/2\BZ$. Then the quotient map $\phi: E\ra E'$ is an isogeny over $\BQ$ of degree $2$, and we write $\phi':E'\ra E$ for its dual isogeny. Then the kernel of $\phi'$ is rational over $\BQ$, and hence contains a rational point of order $2$. Therefore $E'[2^\infty](\BQ)=\BZ/2\BZ$, and hence Condition $\mathrm{(Tor)}$ holds.
\end{proof}

\subsection{Equivalent condition for the $2$-part of the Birch and Swinnerton-Dyer conjecture}
In the rest of this section, we assume that the Manin constant $c_E$ of $E$ is always odd. Let $\Omega^+$ (respectively, $\Omega^-$) be the minimal real (respectively, imaginary) period of $E$. \begin{prop}\label{L1}
For the elliptic curve $E$ we have $L(E,1)\neq 0$, and
\[\ord_2\left(\frac{L(E,1)}{\Omega^+}\right)=-1.\]
\end{prop}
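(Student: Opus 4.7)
The plan is to use the modular parametrization to place $c_E L(E,1)/\Omega^+$ explicitly into $\tfrac{1}{2m}\BZ$ for an appropriate odd integer $m$, and then extract its $2$-adic valuation from the hypothesis $f([0])\notin 2E(\BQ)$.

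First I would invoke the standard identity $f^*\omega_E = c_E\cdot 2\pi i\, f_E(z)\,dz$ defining the Manin constant (with $\omega_E$ the N\'eron differential and $f_E$ the normalized newform attached to $E$), together with the Mellin-transform formula $L(E,1) = 2\pi\int_0^\infty f_E(iy)\,dy$. Integrating $f^*\omega_E$ along a vertical path on $X_0(N)$ from $[0]$ to $[\infty]$ and applying Abel--Jacobi in $E(\BC)=\BC/\Lambda$, where $\Lambda$ denotes the N\'eron period lattice, yields up to sign
$$c_E L(E,1) \equiv f([0]) \pmod{\Lambda}.$$

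Second, I would pin down the order of $f([0])$. It is torsion by Manin--Drinfeld; write its order as $2^a m$ with $m$ odd. Since $E[2^\infty](\BQ) = \BZ/2\BZ$ by Proposition~\ref{tor}, the $2$-primary part of $f([0])$ is a rational $2$-power torsion point, hence has order at most $2$, so $a\le 1$. Conversely, if $a=0$ then $f([0]) = 2\cdot\tfrac{m+1}{2}\,f([0]) \in 2E(\BQ)$, contradicting the standing hypothesis $f([0])\notin 2E(\BQ)$. Hence the order of $f([0])$ is exactly $2m$ with $m$ odd.

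Third, since $L(E,s)$ has real Dirichlet coefficients, $c_E L(E,1)\in \BR$, and by construction $\Lambda\cap\BR = \BZ\Omega^+$. Writing $c_E L(E,1)/\Omega^+ = k/(2m)\in\BQ$, the inclusion $\BR/\BZ\Omega^+\hookrightarrow \BC/\Lambda$ is injective, so the order of this element in $E(\BC)$ coincides with its order $2m/\gcd(k,2m)$ in $\BR/\BZ\Omega^+$. Matching with the order $2m$ of $f([0])$ forces $\gcd(k,2m)=1$, so $k$ is odd and nonzero. This gives $L(E,1)\neq 0$ and $\ord_2(c_E L(E,1)/\Omega^+) = -1$; since $c_E$ is odd by hypothesis, the desired equality $\ord_2(L(E,1)/\Omega^+) = -1$ follows.

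The principal obstacle is setting up the Abel--Jacobi congruence $c_E L(E,1)\equiv f([0]) \pmod\Lambda$ cleanly, which requires careful bookkeeping of sign conventions, the precise definition of the Manin constant, and the real-period normalization $\Lambda\cap\BR = \BZ\Omega^+$. Once this is in place the remainder is a short formal manipulation with finite-order elements in $\BC/\Lambda$.
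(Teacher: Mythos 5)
Your proof is correct and follows essentially the same route as the paper: both derive the Abel--Jacobi congruence $c_E L(E,1)\equiv f([0])\pmod{\Lambda}$ from the optimal modular parametrization, use that the image lies in the real sublattice $\BZ\Omega^+$, and then read off the $2$-adic valuation from the exact order of the torsion point $f([0])$. The only stylistic difference is that you pin down the order of $f([0])$ explicitly (using Proposition~\ref{tor} together with $f([0])\notin 2E(\BQ)$ to show the $2$-part has order exactly $2$), whereas the paper compresses the same information into the choice of a sufficiently large odd integer $C$ with $Cf([0])$ of order $2$.
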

\begin{proof}
Let $f:X_0(N)\ra E$ be an optimal modular parametrization.  We embed $X_0(N)$ into its Jacobian $J_0(N)$ via the base point $[\infty]$. Then there exists a unique homomorphism $\wt{f}:J_0(N)\ra E$ through which $f$ factors. We consider the complex uniformization of $\wt{f}$. By the Abel--Jacobi theorem, we have 
\[J_0(N)=\RH^1(X_0(N),\BR)/\RH_1(X_0(N),\BZ)  \text{ \ and \ } \Jac(E)=\RH^1(E,\BR)/\RH_1(E,\BZ).\]
Then $f$ induces a homomorphism
\[J_0(N)\ra \Jac(E),\quad \gamma\mapsto f(\gamma),\quad \gamma\in \RH^1(X_0(N),\BR).\]
The elliptic curve $E$ has a complex uniformization $E=\BC/L$ where $L\subset \BC$ is the period lattice of $E$, and we have an isomorphism
\[\Jac(E)\simeq E,\quad \alpha\mapsto \int_\alpha \omega_E,\]
where $\omega_E$ is the N\'eron differential of $E$.  Then the complex uniformization of $\wt{f}$ is explicitly given as
\begin{equation}\label{complex}
\wt{f}:J_0(N)\ra E,\quad \gamma \mapsto \int_{f(\gamma)}\omega_E.
\end{equation}
In particular, we take $\gamma=[0\ra \infty i]$ to be the path on $X_0(N)$ joining $[0]$ and $[\infty i].$  Let $g$ be the newform associated to $E$. Since $f^*\omega_E=c_E2\pi i g(z)dz$, it follows that 
\[L(E,1)=2\pi i \int_\gamma g(z)dz=c_E^{-1}\int_{f(\gamma)}\omega_E.\]
Then from the complex uniformization (\ref{complex}), we see that
\begin{equation}\label{L-value}
f([0])=c_EL(E,1)\mod L.
\end{equation}
Let $\RH_1(E,\BZ)^+$ be the submodule of $\RH_1(E,\BZ)$ that is invariant under the complex conjugation. Then $\RH_1(E,\BZ)^+$ is free of rank $1$ and
\[\Omega^+=\left|\int_{\gamma^+} \omega_E\right|,\]
where $\gamma^+$ is a generator of the submodule $\RH_1(E,\BZ)^+$. Since $\gamma$ is a real path on $X_0(N)$, it follows that $f(\gamma)\in \RH_1(E,\BR)^+=\RH_1(E,\BZ)^+\otimes_\BZ\BR$. Let $C$ be a sufficiently large odd integer such that $Cf([0])$ has order $2$. Then by (\ref{L-value}) we have
\[Cc_E\frac{L(E,1)}{\Omega^+}\not \in \BZ \text{ \ and \ }  2Cc_E\frac{L(E,1)}{\Omega^+}\in \BZ.\]
Hence
\[\ord_2\left(\frac{L(E,1)}{\Omega^+}\right)=-1,\]
as desired.
\end{proof}
Denote
\[\Omega=\int_{E(\BR)} |\omega_E|\]
the real period of $E$. Then $\Omega=\Omega^+$ or $2\Omega^+$ according to that $E(\BR)$ is connected or not. The period $\Omega$ is the one appearing in the exact formula which is part of the Birch and Swinnerton-Dyer conjecture and we define
\[L^\alg(E,1)=\frac{L(E,1)}{\Omega}.\]

\begin{prop}\label{dis}
The discriminant $\Delta_E<0$. In particular,  we have \[\ord_2(L^\alg(E,1))=-1.\]  
\end{prop}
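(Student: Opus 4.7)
The plan is to first prove $\Delta_E<0$, from which the 2-adic valuation claim will follow immediately. Indeed, once $\Delta_E<0$, the locus $E(\BR)$ is connected, so $\Omega=\Omega^+$ and Proposition \ref{L1} yields
\[
\ord_2(L^{\alg}(E,1))=\ord_2(L(E,1)/\Omega^+)=-1.
\]

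To establish $\Delta_E<0$, I would argue by contradiction. Suppose $\Delta_E>0$, so that $E(\BR)$ has two connected components and $E[2]\subset E(\BR)$. Writing a model $y^2=(x-e_1)(x-e_2)(x-e_3)$ with $e_1>e_2>e_3$ all real, the identity component $E(\BR)^0$ contains only the 2-torsion point $(e_1,0)$ (besides the identity), while $(e_2,0),(e_3,0)$ lie in the bounded oval. Because $\gamma=[0\to i\infty]$ is a real path on $X_0(N)$ and $f$ is defined over $\BR$, the continuous image $f(\gamma)$ is a connected path in $E(\BR)$ terminating at $0\in E(\BR)^0$, so $f([0])\in E(\BR)^0$. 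The hypothesis $f([0])\notin 2E(\BQ)$ combined with Proposition \ref{tor} forces the 2-primary component of $f([0])$ to be the unique nontrivial rational 2-torsion $T$; since odd torsion lies inside the identity-component subgroup $E(\BR)^0$ (the only connected subgroup), we conclude $T\in E(\BR)^0$, i.e., $T=(e_1,0)$. Normalizing the Weierstrass model to place $T$ at the origin yields $E:y^2=x(x^2+ax+b)$ with $a,b>0$ and $a^2-4b>0$.

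I would then pass to the 2-isogenous curve $E'=E/\langle T\rangle$. By Vélu's formulas, $E':y^2=x(x^2-2ax+(a^2-4b))$, and $\Delta_{E'}=256b(a^2-4b)^2>0$; hence $E'(\BR)$ also has two components. The contradiction should come from a period comparison under the isogeny $\phi:E\to E'$: the equality of $L$-functions $L(E,s)=L(E',s)$ combined with Proposition \ref{L1} applied to both $E$ and $E'$ produces parallel identities for $L^{\alg}(E,1)$ and $L^{\alg}(E',1)$, while the 2-adic relation between $\Omega(E)$ and $\Omega(E')$ forced by the two-component assumption (each equals twice the minimal real period) together with the rational ratio $\phi^{\ast}\omega_{E'}=\omega_E$ yields a parity obstruction inconsistent with the oddness of $c_E$ and of the analogous Manin constant for $E'$.

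The step I expect to be the most delicate is this last one, namely producing a clean 2-adic contradiction from the joint positivity of $\Delta_E$ and $\Delta_{E'}$. The essential ingredients should be (i) the pullback identity $\phi^{\ast}\omega_{E'}=\omega_E$ relating the two period lattices with index a power of $2$, (ii) Condition (Tor) applied to $E'$, which constrains the location of the rational 2-torsion of $E'$ inside $E'(\BR)$, and (iii) the oddness of $c_E$, which prevents the factor of $2$ separating $\Omega$ from $\Omega^+$ in the two-component case from being absorbed into the numerator computation of $L(E,1)\bmod L$.
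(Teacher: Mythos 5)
Your reduction of the second claim to the first is correct and matches the paper, and your geometric observation that $f([0])\in E(\BR)^0$ (hence the rational $2$-torsion $T$ must be the largest real root) is correct and interesting — but it is not the paper's route, and the contradiction you sketch in the final paragraph does not materialize.

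There are two concrete problems with that final step. First, Proposition~\ref{L1} is proved for the optimal quotient $E$ and hinges on $f([0])\notin 2E(\BQ)$; there is no analogue for $E'$. The induced parametrization $\phi\circ f\colon X_0(N)\to E'$ sends $[0]$ to $\phi(f([0]))$, and since the $2$-primary part of $f([0])$ is $T\in\ker\phi$, the point $\phi(f([0]))$ has \emph{trivial} $2$-primary part, so the analogue of Proposition~\ref{L1} for $E'$ fails. Second, if one tracks periods honestly, no parity obstruction appears: under your hypotheses $\phi$ is a surjective degree-$2$ covering $E(\BR)\to E'(\BR)$, giving $\Omega(E)/\Omega(E')=2/\alpha$ with $\phi^*\omega_{E'}=\alpha\omega_E$, $\alpha\in\{1,2\}$. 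Combining this with $\ord_2(L(E,1)/\Omega^+_E)=-1$ and the fact that $\phi(f([0]))$ has trivial $2$-part yields $\ord_2(L(E,1)/\Omega^+_{E'})=-\ord_2(\alpha)$, which exactly \emph{saturates} the natural inequality rather than violating it. Nor can your deduction $\Delta_{E'}>0$ be the contradiction, since Proposition~\ref{Delta} shows $\Delta_{E'}>0$ holds in any case.

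The paper's actual proof uses an ingredient entirely absent from your sketch: Manin's modular-symbol formula $|E(\BF_q)|L(E,1)=c_E\sum_{k=1}^{q-1}\langle\{0,k/q\},g\rangle$ evaluated at an admissible prime $q$. When $\Delta_E>0$ the period lattice is rectangular, so each $\langle\{0,k/q\},g\rangle=s_k\Omega^++t_k\Omega^-$ with $s_k,t_k\in\BZ$, and pairing $k$ with $q-k$ (via $\overline{\langle\{0,k/q\},g\rangle}=\langle\{0,1-k/q\},g\rangle$) makes the sum equal to $2\Omega^+c_E\sum_{k\le(q-1)/2}s_k$. With $\ord_2(L(E,1)/\Omega^+)=-1$ and $c_E$ odd this forces $\ord_2|E(\BF_q)|\ge 2$, contradicting $|E(\BF_q)|\equiv 2\pmod 4$ which holds for every admissible $q$. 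That rectangular-lattice-plus-Manin-formula step is the heart of the proof, and without it (or some substitute of comparable arithmetic content) your argument does not close.
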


\begin{proof}
Suppose, on the contrary, that $\Delta_E>0$. Then the period lattice is $\BZ\Omega^++\BZ\Omega^-$. 
Let $q$ be an admissible prime. By Manin's formula \cite[Theorem 3.3]{Manin72}, we have 
\[|E(\BF_q)|L(E,1)=c_E\sum_{k=1}^{q-1}\langle \{0,k/q\}, g\rangle,\]
where $\{\cdot,\cdot\}$ denotes the modular symbol, and $\langle \{0,k/q\}, g\rangle=s_k\Omega^++t_k\Omega^-$, with $s_k,t_k\in \BZ$. Since 
$$
\ov{\langle \{0,k/q\}, g\rangle}=\langle \{0,1-k/q\}, g\rangle,
$$ 
we have
\[|E(\BF_q)|L(E,1)=2\Omega^+c_E\sum_{k=1}^{\frac{q-1}{2}}s_k.\]
Then by Proposition \ref{L1}, we have $\ord_2(|E(\BF_q)|)\geq 2$. Since $q$ is admissible, that is 
\[a_q=1-\left(\frac{-1}{q}\right)\mod 4,\]
it follows that $|E(\BF_q)|=q+1-a_q$ has $2$-adic valuation $1$, and we get a contradiction.
Thus $\Delta_E<0$, whence $E(\BR)$ is connected and $\Omega=\Omega^+$, and 
\[L^\alg(E,1)=\frac{L(E,1)}{\Omega^+}\]
has $2$-adic valuation $-1$.
\end{proof}

\begin{coro}\label{24E}
We have 
\[\rk E(\BQ)=\ord_{s=1}L(E,s)=0.\]
The $2$-part of the Birch and Swinnerton-Dyer conjecture holds for the elliptic curve $E$ if and only if
\[\ord_2\left(\frac{|\Sha(E)|}{|E(\BQ)_\tor|^2}\prod_\ell c_\ell(E)\right)=-1.\]

\end{coro}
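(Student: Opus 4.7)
The plan is to read off the corollary directly from the two preceding propositions together with the standard Kolyvagin--Gross--Zagier machinery for analytic rank zero.

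First, I would handle the rank statement. Proposition \ref{L1} already gives $L(E,1)\neq 0$, so $\ord_{s=1}L(E,s)=0$ is immediate. For the Mordell--Weil rank, I would invoke the theorem of Kolyvagin (combined with the modularity of $E$, which is already in force throughout the paper): whenever $L(E,1)\neq 0$ one has $\rk E(\BQ)=0$ and $\Sha(E)$ is finite. No additional work is required here beyond citing this result.

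Next, I would turn to the $2$-part statement. For a rank zero elliptic curve the Birch and Swinnerton-Dyer conjecture takes the form
\[
L^\alg(E,1)=\frac{L(E,1)}{\Omega}=\frac{|\Sha(E)|\prod_{\ell}c_\ell(E)}{|E(\BQ)_\tor|^2},
\]
where the right hand side makes sense because $\Sha(E)$ is finite by the previous step. The $2$-part of the conjecture is, by definition, the equality of the $2$-adic valuations of the two sides. Proposition \ref{dis} supplies $\ord_2(L^\alg(E,1))=-1$ unconditionally for our $E$, so the $2$-part of BSD for $E$ is equivalent to
\[
\ord_2\!\left(\frac{|\Sha(E)|}{|E(\BQ)_\tor|^2}\prod_{\ell}c_\ell(E)\right)=-1,
\]
which is precisely the claimed condition.

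There is really no substantial obstacle in this corollary: the work has all been done in Propositions \ref{L1} and \ref{dis}, and what remains is a one-line bookkeeping argument combined with the appeal to Kolyvagin. The only minor point worth double-checking is that the Manin-constant hypothesis used in Proposition \ref{L1} is still in force here (it is, by the standing assumption at the start of the subsection), so that $L^\alg(E,1)$ can legitimately be compared termwise to the BSD right hand side without an extra factor of a power of $2$ intruding.
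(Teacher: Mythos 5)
Your argument matches the paper's proof essentially verbatim: both cite Proposition \ref{L1} for $L(E,1)\neq 0$, invoke Gross--Zagier and Kolyvagin for $\rk E(\BQ)=0$ and finiteness of $\Sha(E)$, and then observe that Proposition \ref{dis} ($\ord_2(L^{\alg}(E,1))=-1$) makes the $2$-part of BSD equivalent to the stated valuation condition. Your remark about the Manin-constant hypothesis being in force is a sensible sanity check but adds nothing beyond what the paper implicitly assumes.
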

\begin{proof}
By Proposition \ref{L1}, $L(E,1)\neq 0$ and by the work of Gross and Zagier \cite{GZ1986} and Kolyvagin \cite{Kolyvagin1990}, $E(\BQ)$ has rank zero and $|\Sha(E)|$ is finite. The statement for the $2$-part of the Birch and Swinnerton-Dyer conjecture for $E$ follows immediately from Proposition \ref{dis}.
\end{proof}

\subsection{Discriminants and periods}

\begin{prop}\label{Delta}
We have $\Delta_E<0$ and $\Delta_{E'}>0$. 
\end{prop}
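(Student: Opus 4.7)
By Proposition \ref{dis}, $\Delta_E<0$ is already established, so the remaining task is to show that $\Delta_{E'}>0$. My strategy is to compute the period lattice of $E'$ explicitly via the complex uniformization of the degree-$2$ isogeny $\phi:E\to E'$, whose kernel is $E[2](\BQ)$ and is generated by $f([0])$.

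Since $\Delta_E<0$, the curve $E(\BR)$ is connected, and the period lattice of $E$ admits a basis $(\Omega^+,\tau)$ with $\tau=(\Omega^++\Omega^-)/2$, where $\Omega^+>0$ is real and $\Omega^-\in i\BR_{>0}$ is purely imaginary. Writing $L_E=\BZ\Omega^++\BZ\tau$, the three non-trivial $2$-torsion points of $\BC/L_E$ are represented by $\Omega^+/2,\ \tau/2,\ (\Omega^++\tau)/2$, and a direct check shows that only $\Omega^+/2$ is fixed by complex conjugation modulo $L_E$. Thus $\Omega^+/2$ is the unique non-trivial real $2$-torsion point of $E$.

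Next I would identify $f([0])$ with this real $2$-torsion point. By the Abel--Jacobi formula (\ref{L-value}), $f([0])\equiv c_EL(E,1)\pmod{L_E}$, which is real because $f([0])\in E(\BQ)$. By Proposition \ref{L1} we have $\ord_2(L(E,1)/\Omega^+)=-1$, and since the Manin constant $c_E$ is odd, $c_EL(E,1)$ is an odd multiple of $\Omega^+/2$ in $\BR/(L_E\cap\BR)$. Hence $f([0])\equiv \Omega^+/2\pmod{L_E}$.

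Finally, since $\ker\phi$ is generated by $f([0])$, the complex uniformization of $E'$ is $E'=\BC/L_{E'}$ with
\[
L_{E'}=L_E+\BZ\cdot\tfrac{\Omega^+}{2}=\BZ\cdot\tfrac{\Omega^+}{2}+\BZ\tau.
\]
Using $\tau-\Omega^+/2=\Omega^-/2$, this simplifies to
\[
L_{E'}=\BZ\cdot\tfrac{\Omega^+}{2}+\BZ\cdot\tfrac{\Omega^-}{2},
\]
a rectangular lattice spanned by one real and one purely imaginary period. Such a lattice has positive discriminant, so $\Delta_{E'}>0$, as required. The only subtle step is pinning down that $f([0])$ equals precisely the real $2$-torsion $\Omega^+/2$ (not one of the complex-conjugate pair), and this is exactly where Proposition \ref{L1} is decisive; the remainder is a direct lattice computation.
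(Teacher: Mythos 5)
Your argument is correct in outline but takes a genuinely different, analytic route from the paper, and along the way you introduce an unnecessary and slightly imprecise detour. The paper's proof is a short algebraic computation: move the rational $2$-torsion point to $(0,0)$, write $E: Y^2 = X^3 + aX^2 + bX$ with $\Delta_E = 2^4 b^2(a^2-4b)$, observe that the quotient by $(0,0)$ is $E': y^2 = x^3 - 2ax^2 + (a^2-4b)x$ with $\Delta_{E'} = 2^8 b(a^2-4b)^2$, and note that $\Delta_E < 0$ forces $b>0$, which in turn forces $\Delta_{E'}>0$. This uses nothing beyond Proposition \ref{dis} and a Weierstrass equation.

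Your complex-analytic route — identifying the lattice of $E'$ as $L_E + \BZ \cdot\tfrac{\Omega^+}{2}$ and observing it is rectangular, hence of positive discriminant — is a valid alternative, and it arguably illuminates \emph{why} the sign flips. However, you should repair one step. The kernel of $\phi$ is $E[2](\BQ)$, not the cyclic group generated by $f([0])$: the point $f([0])$ may well have odd-order components, so the assertion ``$f([0]) \equiv \Omega^+/2 \pmod{L_E}$'' is not literally correct (only its $2$-primary part equals $\Omega^+/2$). More to the point, the entire appeal to Proposition \ref{L1} and the formula $f([0]) \equiv c_E L(E,1) \pmod{L_E}$ is superfluous: under $(\mathrm{Tor})$, the unique nontrivial element of $E[2](\BQ)$ is in particular real, and since $\Delta_E<0$ the only nontrivial real $2$-torsion point of $\BC/L_E$ is represented by $\Omega^+/2$, exactly as your lattice computation shows. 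So $\ker\phi$ is generated by $\Omega^+/2$ directly, with no reference to $L(E,1)$, the Manin constant, or the modular parametrization. (One further small technical point worth keeping in mind: $L_E + \BZ\cdot\tfrac{\Omega^+}{2}$ is a priori only a rational scalar multiple of the N\'eron lattice of $E'$, because $\phi$ is a $\BQ$-isogeny and the analytic scaling factor $\lambda$ with $\phi^*\omega_{E'} = \lambda\omega_E$ is rational; but scaling a rectangular lattice by a real number keeps it rectangular, so the sign conclusion is unaffected.) With these adjustments your proof is clean; it trades the paper's elementary but somewhat opaque discriminant computation for a more conceptual picture of the isogeny on period lattices.
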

\begin{proof}
Moving the rational $2$-torsion point on $E$ to $(0,0)$, we can find a Weierstrass equation for $E/\BQ$ of the form
\begin{equation}\label{eqE}
Y^2 = X^3 + aX^2 + bX,
\end{equation}
where $a,b \in \BZ$. Dividing this curve by the subgroup generated by the point $(0,0)$, we obtain a Weierstrass equation for $E'/\BQ$ of the form
\begin{equation}\label{eqE'}
y^{2}=x^3-2ax^{2}+(a^2-4b)x.
\end{equation}
Note that the discriminants of various Weierstrass equations of elliptic curves over $\BQ$ are the same up to multiplication by non-zero rational twelfth powers. It is easy to calculate that the discriminant of equation \eqref{eqE} is $2^4b^2(a^2-4b)$. It is proved in Proposition \ref{dis} that $\Delta_E<0$, and hence $b>0$. A simple computation shows that the discriminant of equation \eqref{eqE'} is $2^8b(a^2-4b)^2$, which is positive since $b>0$. Therefore, $\Delta_{E'}>0$.
\end{proof}

\begin{lem}\label{period}
Let $\Omega$ and $\Omega'$ denote the least positive real periods of $E$ and $E'$, respectively. Then
\[\frac{\Omega}{\Omega'}=1\text{ or }\frac{1}{2}.\]
\end{lem}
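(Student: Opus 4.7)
The plan is to analyze the degree-$2$ isogeny $\phi\colon E\to E'$ restricted to the real loci, with the ratio $\Omega/\Omega'$ controlled by the constant by which $\phi^*$ scales N\'eron invariant differentials.

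First, I would use the functoriality of the N\'eron model over $\BZ$ to obtain an integer $\alpha>0$ with $\phi^*\omega_{E'}=\alpha\,\omega_E$, where $\omega_E,\omega_{E'}$ are the N\'eron differentials. Writing $(\phi')^*\omega_E=\beta\,\omega_{E'}$ analogously and using $\phi'\circ\phi=[2]$ together with $[2]^*\omega_E=2\omega_E$ gives $\alpha\beta=2$, so $\alpha\in\{1,2\}$.

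Next, I would combine this with Proposition~\ref{Delta}. Since $\Delta_E<0$, the real locus $E(\BR)$ is a single circle; since $\Delta_{E'}>0$, the real locus $E'(\BR)$ has two components of equal length, each contributing $\Omega'/2$ to $\int_{E'(\BR)}|\omega_{E'}|=\Omega'$. The kernel $\ker\phi=E[2](\BQ)$ is rational and therefore lies in $E(\BR)$, so the continuous group homomorphism $\phi|_{E(\BR)}$ has kernel of order exactly $2$. Its image is a closed, connected, nontrivial subgroup of the circle $E'(\BR)^0$, and thus equals $E'(\BR)^0$. Consequently $\phi|_{E(\BR)}\colon E(\BR)\to E'(\BR)^0$ is a topological covering of degree $2$.

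Finally, I would finish with the change-of-variables formula for this covering:
\[\alpha\,\Omega \;=\; \int_{E(\BR)}|\phi^*\omega_{E'}| \;=\; 2\int_{E'(\BR)^0}|\omega_{E'}| \;=\; 2\cdot\frac{\Omega'}{2} \;=\; \Omega',\]
so $\Omega/\Omega'=1/\alpha\in\{1,\tfrac12\}$. The only delicate point is the integrality statement $\alpha\in\BZ$, which comes from the N\'eron-model description of $\omega_E$ and $\omega_{E'}$; once that is in hand, the rest is an elementary covering argument driven by the topology of $E(\BR)$ and $E'(\BR)$ provided by Proposition~\ref{Delta}.
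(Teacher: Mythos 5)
Your argument is correct and gives the right conclusion, but it reproves the period-comparison formula directly rather than quoting it, which is a genuinely different route from the paper. The paper obtains the identity $\Omega/\Omega' = |\omega/\phi^*\omega'|$ at the real place by citing Dokchitser--Dokchitser \cite[Theorem 1.2]{DD14}, and then combines this with the same $\alpha\beta=2$ argument you give. You instead derive the formula from scratch: you observe that $\Delta_E<0$ makes $E(\BR)$ a single circle, that $\ker\phi = E[2](\BQ)$ is rational and hence lies in $E(\BR)$, and that therefore $\phi\colon E(\BR)\to E'(\BR)^0$ is a degree-$2$ covering of circles; a change of variables then gives $\alpha\Omega = 2\int_{E'(\BR)^0}|\omega_{E'}| = \Omega'$ since, with $\Delta_{E'}>0$, the two components of $E'(\BR)$ each carry half of $\int_{E'(\BR)}|\omega_{E'}|$. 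What your route buys is a self-contained, elementary proof that also makes transparent exactly why the shape of $E(\BR)$ and $E'(\BR)$ (forced by Proposition~\ref{Delta}) and the rationality of $\ker\phi$ combine to make the local correction factor trivial; the paper's route is shorter but offloads this to a citation. One small caution: your computation treats $\Omega'$ as the full real period $\int_{E'(\BR)}|\omega_{E'}|$. That is indeed the normalization the paper needs downstream (it is the period that appears in the BSD formula and in the isogeny-invariance computation of Proposition~\ref{Selmer1}), but it differs by a factor of $2$ from the literal reading of the lemma statement ``least positive real period'' when $E'(\BR)$ is disconnected; with the least-positive-real-period normalization for $\Omega'$ the ratio would come out as $1$ or $2$. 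So your interpretation is the right one for the purposes of the paper, and it is worth being explicit about which normalization one is using.
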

\begin{proof}
In view of Proposition \ref{Delta}, and by \cite[Theorem 1.2]{DD14}, we have
$$
\frac{\Omega}{\Omega'}=\left|\frac{\omega}{\phi^*\omega'}\right|,
$$
where $\omega$ and $\omega'$ are the minimal differentials (unique up to signs) on $E$ and $E'$, respectively. Suppose $\phi^*\omega'=\alpha \omega$ and ${\phi'}^*\omega=\beta \omega'$ with $\alpha,\beta\in \BZ$. Since 
\[\alpha\beta\omega=\phi^*({\phi'}^*\omega)=[2]^*\omega=2\omega,\]
we see $\alpha=1$ or $2$ and the lemma follows.
\end{proof}

\subsection{Selmer groups}
Recall that $\phi:E \to E'$ is the isogeny of degree $2$ defined over $\BQ$ with kernel $E[\phi]=E[2](\BQ)$, and $\phi': E' \to E$ its dual isogeny. For any finite or infinite place $v$ of $\BQ$, we write $\BQ_v$ for the completion of $\BQ$ at $v$. Let $\kappa_{v,2}(E)$ be the image of the Kummer map
$$
\kappa_{v,2}: E(\BQ_v)/2E(\BQ_v) \hookrightarrow \RH^1(\BQ_v, E[2]).
$$
Similarly, we write
$$
\kappa_{v,\phi}(E):=\Im \left(E'(\BQ_v)/\phi(E(\BQ_v)) \hookrightarrow \RH^1(\BQ_v, E[\phi])\right);
$$
$$
\kappa_{v,\phi'}(E'):=\Im \left(E(\BQ_v)/\phi'(E'(\BQ_v)) \hookrightarrow \RH^1(\BQ_v, E'[\phi'])\right).
$$
The $2$-Selmer group over $\BQ$ is then defined by 
$$
\Sel_{2}(E):=\Ker \left(\RH^1(\BQ, E[2]) \to \bigoplus_v \RH^1(\BQ_v, E[2])/\kappa_{v,2}(E)\right).
$$
The Shafarevich--Tate group $\Sha(E)$ is defined by 
$$
\Sha(E):=\Ker \left(\RH^1(\BQ, E) \to \bigoplus_v \RH^1(\BQ_v, E)\right).
$$
Then we have the following well-known exact sequence
\[0\ra E(\BQ)/2E(\BQ)\ra \Sel_2(E)\ra\Sha(E)[2]\ra 0.\]
Similarly, we define the $\phi$-Selmer group and $\phi'$-Selmer groups over $\BQ$ by 
$$
\Sel_{\phi}(E):=\Ker \left(\RH^1(\BQ, E[\phi]) \to \bigoplus_v \RH^1(\BQ_v, E[\phi])/\kappa_{v,\phi}(E)\right);
$$
$$
\Sel_{\phi'}(E'):=\Ker \left(\RH^1(\BQ, E'[\phi']) \to \bigoplus_v \RH^1(\BQ_v, E'[\phi'])/\kappa_{v,\phi'}(E')\right).
$$

\begin{prop}\label{Selmer1}
Assume that the $2$-part of the Birch and Swinnerton-Dyer conjecture holds for $E$. Then we have
$$
\Sel_2(E)\cong \Sel_2(E') \cong \BZ/2\BZ; \ \ \Sel_\phi(E) \cong \Sel_{\phi'}(E') \cong \BZ/2\BZ. 
$$
\end{prop}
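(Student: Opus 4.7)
\medskip

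\noindent\textbf{Proof plan.} The strategy is to pin down $\Sel_2(E)$ directly from the hypothesis, transfer the corresponding $2$-adic information to $E'$ via the isogeny invariance of the full Birch--Swinnerton-Dyer identity, and then read off the $\phi$- and $\phi'$-Selmer groups from standard descent sequences.

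\emph{Step 1 (the curve $E$).} By Corollary \ref{24E}, the $2$-part of BSD for $E$ reads $\ord_2(|\Sha(E)|\prod_\ell c_\ell(E)/|E(\BQ)_\tor|^2)=-1$. Proposition \ref{tor} gives $\ord_2|E(\BQ)_\tor|=1$, so this simplifies to $\ord_2(|\Sha(E)|\prod_\ell c_\ell(E))=1$. Since $\Sha(E)$ is finite (Corollary \ref{24E}) and the Cassels--Tate pairing forces $|\Sha(E)|$ to be a perfect square, and each $c_\ell(E)$ is a positive integer, the only way to balance this odd $2$-adic valuation is $\ord_2|\Sha(E)|=0$, i.e.\ $\Sha(E)[2]=0$. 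The descent exact sequence $0\to E(\BQ)/2E(\BQ)\to\Sel_2(E)\to\Sha(E)[2]\to 0$, combined with $E(\BQ)/2E(\BQ)\cong\BZ/2\BZ$ (rank zero plus $E[2^\infty](\BQ)=\BZ/2\BZ$), then yields $\Sel_2(E)\cong\BZ/2\BZ$.

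\emph{Step 2 (transfer to $E'$).} Because $\Sha(E)$ and (by the isogeny $\phi$) $\Sha(E')$ are both finite, the BSD identity for $E$ and for $E'$ are the same numerical equality, so the $2$-part of BSD transfers: the $2$-part of BSD for $E'$ also holds. Using $L(E,1)=L(E',1)$, Lemma \ref{period}, and the fact that $\Delta_{E'}>0$ (Proposition \ref{Delta}) makes $E'(\BR)$ disconnected so that its BSD real period is $2\Omega'$, one computes $\ord_2(L^\alg(E',1))\in\{-2,-3\}$. On the other hand the $2$-part of BSD for $E'$ equates this to $\ord_2(|\Sha(E')|\prod_\ell c_\ell(E'))-2$, which is $\ge -2$. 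Thus $\ord_2(L^\alg(E',1))=-2$ and $\ord_2(|\Sha(E')|\prod_\ell c_\ell(E'))=0$; in particular $\Sha(E')[2]=0$, and the descent sequence for $E'$ gives $\Sel_2(E')\cong\BZ/2\BZ$.

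\emph{Step 3 ($\phi$- and $\phi'$-Selmer).} Since $\Sha(E)[\phi]\subseteq\Sha(E)[2]=0$, the $\phi$-descent sequence collapses to $\Sel_\phi(E)\cong E'(\BQ)/\phi E(\BQ)$, and symmetrically (after Step 2) $\Sel_{\phi'}(E')\cong E(\BQ)/\phi' E'(\BQ)$. Computing these cokernels via the kernel--cokernel sequence of $\phi$ on $\BQ$-points---using $\ker\phi=E[\phi]\cong\BZ/2\BZ$, and the injectivity of $\phi$ on the odd-torsion subgroup of $E(\BQ)$ (and symmetrically for $\phi'$) coming from $\phi'\phi=[2]_E$---forces the odd parts of $|E(\BQ)_\tor|$ and $|E'(\BQ)_\tor|$ to coincide, whence $|E'(\BQ)/\phi E(\BQ)|=2|E'(\BQ)_\tor|/|E(\BQ)_\tor|=2$ and likewise $|E(\BQ)/\phi' E'(\BQ)|=2$. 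Therefore $\Sel_\phi(E)\cong\Sel_{\phi'}(E')\cong\BZ/2\BZ$.

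The main obstacle is Step 2: transferring the $2$-part from $E$ to $E'$ requires careful tracking of $2$-adic valuations across the isogeny identity, combining the period comparison of Lemma \ref{period} with the change in the number of connected real components from $E$ to $E'$ dictated by the sign reversal $\Delta_E<0<\Delta_{E'}$ (Proposition \ref{Delta}).
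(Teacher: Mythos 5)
Your proof is correct and reaches the same conclusions, but the route diverges from the paper in its final step. For $\Sel_2(E)$ (Step 1) your argument matches the paper: both use Corollary \ref{24E}, the squareness of $|\Sha(E)|$, and the descent sequence to get $\Sha(E)[2]=0$ and $\Sel_2(E)\cong\BZ/2\BZ$. For $\Sel_2(E')$ (Step 2) you and the paper both invoke Cassels' isogeny-invariance together with Lemma \ref{period} and Proposition \ref{Delta}; the paper phrases this directly as a relation between the two BSD quantities, eq.\ \eqref{tama-E'}, while you phrase it as a computation of $\ord_2 L^\alg(E',1)$ followed by the $2$-part of BSD for $E'$ -- these are the same calculation. (Beware that your claimed range $\{-2,-3\}$ for $\ord_2 L^\alg(E',1)$ inserts a factor of $2$ for $E'(\BR)$ being disconnected on top of Lemma \ref{period}, whereas the paper's use of that lemma in \eqref{tama-E'} already treats $\Omega,\Omega'$ as the BSD periods, giving $\{-1,-2\}$; either reading still yields $\Sha(E')[2]=0$, so the conclusion is unaffected.) The genuine divergence is in Step 3. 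The paper derives $\Sel_\phi(E)\cong\Sel_{\phi'}(E')\cong\BZ/2\BZ$ from the five-term exact sequence linking $\Sel_\phi(E)\to\Sel_2(E)\to\Sel_{\phi'}(E')$ and its dual, using $\Sel_2(E)\cong\Sel_2(E')\cong\BZ/2\BZ$ and the vanishing of $\Sha[2]$. You instead argue at the level of rational points: since $\Sha(E)[\phi]$ and $\Sha(E')[\phi']$ vanish, the $\phi$- and $\phi'$-descent sequences collapse to the cokernels $E'(\BQ)/\phi E(\BQ)$ and $E(\BQ)/\phi' E'(\BQ)$, which you then count directly using the rank-$0$ property, $|\ker\phi(\BQ)|=2$, and the fact that $\phi$ and $\phi'$ are mutually inverse isomorphisms on the odd torsion. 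Both arguments are sound; yours is the more elementary point-counting route while the paper's bypasses torsion bookkeeping at the cost of invoking the five-term sequence in both directions. One small thing worth spelling out in your Step 3 is that $E'[2^\infty](\BQ)=\BZ/2\BZ$ (i.e.\ Proposition \ref{tor} applies symmetrically to $E'$), which is needed to conclude $|E'(\BQ)_\tor|=|E(\BQ)_\tor|$.
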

\begin{proof}
By Proposition \ref{L1}, we have $L(E,1)\neq 0$. By the work of Gross and Zagier \cite{GZ1986} and Kolyvagin \cite{Kolyvagin1990}, $E(\BQ)$ has rank zero and $|\Sha(E)|$ is finite, and hence a square. By Corollary \ref{24E}, we have 
\[\ord_2\left(\frac{|\Sha(E)|}{|E(\BQ)_\tor|^2}\prod_\ell c_\ell(E)\right)=-1,\]
we then must have $\Sha(E)[2]=0$, and hence $\Sel_2(E)=E[2](\BQ)=\BZ/2\BZ$. By the invariance of the Birch and Swinnerton-Dyer conjecture under isogeny  \cite[(Corollary to) Theorem 1.3]{Cassels65}, we have that  $E'(\BQ)=0$ also has rank $0$, and
\begin{eqnarray}\label{tama-E'}
\ord_2\left(\frac{|\Sha(E')|}{|E'(\BQ)_\tor|^2}\prod_\ell c_\ell(E')\right)&=&\ord_2\left(\frac{|\Sha(E)|}{|E(\BQ)_\tor|^2}\prod_\ell c_\ell(E)\right)+\ord_2\left(\frac{\Omega}{\Omega'}\right)\\
&=&-1+\ord_2\left(\frac{\Omega}{\Omega'}\right)=-1\text{ or }-2.\nonumber
\end{eqnarray}
The last equality follows from Lemma \ref{period}. In view of \eqref{tama-E'}, we have $\ord_2(|\Sha(E')|) \leq 1$. Since the order of $\Sha(E')$ is a square, we conclude that $\Sha(E')[2]=0$, and hence 
$$
\Sel_2(E')=E'[2](\BQ)=\BZ/2\BZ.
$$

For the second assertion, consider the following well-known exact sequence:
$$
0 \rightarrow E'(\BQ)[\phi']/\phi(E(\BQ)[2]) \rightarrow \Sel_{\phi}(E) \rightarrow \Sel_{2}(E) \rightarrow \Sel_{\phi'}(E') \rightarrow\Sha(E')[\phi']/\phi(\Sha(E)[2]) \rightarrow 0.
$$
Since $\Sel_2(E)=\Sel_2(E')=\BZ/2\BZ$, the non-trivial element in each Selmer group comes from the rational $2$-torsion point in $E(\BQ)[2]$ and $E'(\BQ)[2]$, respectively. Thus, we have 
$$
\Sha(E)[2]=\Sha(E')[2]=0,
$$
and hence the last term in the above exact sequence is zero. 
Since 
$$
E'(\BQ)[\phi']/\phi(E(\BQ)[2])\cong \BZ/2\BZ,
$$ 
it follows from the above exact sequence and its dual form that 
$$
\Sel_\phi(E) \cong \Sel_{\phi'}(E')\cong \Sel_2(E)\cong \Sel_2(E') \cong \BZ/2\BZ,
$$ 
as desired.
\end{proof}

\bigskip

\section{Heegner points}
Throughout this section, we let $E$, $p$ and $M$ be as in Theorem \ref{main1}, and take $K=\BQ(\sqrt{-p})$. We will construct Heegner points lying in $E^{(-pM)}(\BQ)$. In the main result of this section (Theorem \ref{divisibility}), we prove our Heegner points do indeed have infinite order, and we also establish the exact $2$-divisibility of these Heegner points. Then we show that Theorem \ref{main1} follows immediately from the non-triviality of Heegner points via the work of Gross--Zagier and Kolyvagin. The exact $2$-divisibility of Heegner points is used to prove $2$-part of the Birch and Swinnerton-Dyer conjecture for the related elliptic curves in the next section.

The induction arguments on Heegner points given in this section strengthen those in the earlier papers \cite{Tian14}, \cite{CLTZ15} and \cite{CLW16}, in two significant ways. Firstly, in all such arguments,  the non-triviality of Heegner points relies on the crucial Galois identity of the genus Heegner point given in Theorem \ref{genus}:
\[z_0^{\sigma_{t_0}}+\ov{z_0}=T,\]
where $T$ is a torsion point. What is new in our approach is that, while the  previous authors always make congruence restrictions on the prime
factors of $M$ to ensure the $2$-primary part of $T$ is non-trivial,  we make no such congruence restrictions,   allowing  the $2$-primary part of $T$ to be zero in the induction process (see Corollary \ref{split} and (\ref{novelty})). 
Secondly, all these methods use norm relations on Heegner points in one way or another. We use the full strength of these norm relations of Heegner points in the induction process to make the required $2$-adic valuation of the Hecke eigenvalues $a_q$ as small as possible.

\subsection{Non-triviality of Heegner points}
Let $\CO_K$ be the ring of integers of $K$. Since $(E,K)$ satisfies the Heegner hypothesis, there exists an ideal $\sN\subset \CO_K$ such that $\CO_K/\sN\simeq \BZ/N\BZ$. For any integer $c$ coprime to $pN$, let $\CO_c\subset \CO_K$ be the order of discriminant $-pc^2$. Define the Heegner points
$$y_c=(\BC/\CO_c\ra \BC/(\CO_c\cap \sN)^{-1})\in X_0(N)(H_c),$$
where $H_c$ denotes the ring class field over $K$ of conductor $c$.

For any abelian group $G$ denote 
$$
\wh{G}:=G\otimes_\BZ\wh{\BZ},
$$ 
where $\wh{\BZ}=\prod_{p}\BZ_p$. Let $\sigma: K^\times \backslash \wh{K}^\times \ra \Gal(K^\ab/K)$ be the Artin reciprocity map. Let $t_0\in \wh{K}^\times$ be the id\'ele such that $t_0\wh{\CO_K}=\wh{\sN}$. Let $w$ be the Atkin--Lehner operator on $X_0(N)$. We have the following proposition from \cite[\S 5]{Gross84}:
\begin{prop}\label{Galois}
The Heegner point $y_c$ satisfies 
$$y_c^w={\ov{y_c}}^{\sigma_{t_0^{-1}}}.$$
\end{prop}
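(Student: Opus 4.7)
The plan is to verify the identity $y_c^w = \overline{y_c}^{\sigma_{t_0^{-1}}}$ by chasing through the moduli-theoretic description of all three operations on Heegner points, following Gross's treatment in \cite[\S 5]{Gross84}.

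First I would unpack what each side is at the level of CM data. The point $y_c$ is represented by the cyclic $N$-isogeny $\phi_c:\BC/\CO_c \to \BC/\mathfrak{n}_c^{-1}$ where $\mathfrak{n}_c = \CO_c \cap \sN$. Because complex conjugation fixes $\CO_c$ but, by the Heegner hypothesis, exchanges $\sN$ with $\ov{\sN}$ (a distinct prime-to-$\sN$ ideal, since every $\ell \mid N$ splits in $K$), the point $\ov{y_c}$ corresponds to $\BC/\CO_c \to \BC/\ov{\mathfrak{n}_c}^{-1}$.

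Next I would compute $y_c^w$ using the standard modular description of the Atkin--Lehner involution: $w_N$ sends a cyclic $N$-isogeny $\phi: E \to E'$ to its dual $\phi^\vee: E' \to E'/\phi(E[N])$. Applied to $y_c$, this gives the pair $\BC/\mathfrak{n}_c^{-1} \to \BC/\mathfrak{n}_c^{-1}\ov{\mathfrak{n}_c}^{-1}N = \BC/(N\mathfrak{n}_c\ov{\mathfrak{n}_c})^{-1}\cdot N$, which after rescaling by an element of $K^\times$ (and using $\mathfrak{n}_c\ov{\mathfrak{n}_c}$ equal to $N$ times a unit of $\CO_c$ prime to $N$) is equivalent to the isogeny $\BC/\mathfrak{n}_c^{-1} \to \BC/\CO_c\cdot \ov{\mathfrak{n}_c}^{-1}$ obtained by multiplying both lattices by $\mathfrak{n}_c$.

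Then I would apply Shimura's reciprocity law to $\ov{y_c}$: for any id\`ele $t \in \widehat{K}^\times$, the Galois element $\sigma_t$ acts on the CM-moduli point $\BC/\mathfrak{a}$ by sending its lattice to $t^{-1}\mathfrak{a}$ (in the appropriate normalization), and compatibly on the cyclic level structure. Since $t_0\widehat{\CO_K} = \widehat{\sN}$, the id\`ele $t_0^{-1}$ corresponds to the fractional ideal $\sN^{-1}$, so $\sigma_{t_0^{-1}}$ multiplies lattices by $\sN$. Applying this to $\BC/\CO_c \to \BC/\ov{\mathfrak{n}_c}^{-1}$ produces exactly the pair $\BC/\mathfrak{n}_c \to \BC/\mathfrak{n}_c\ov{\mathfrak{n}_c}^{-1}$, which up to the same rescaling matches $y_c^w$ above.

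The main obstacle is purely bookkeeping: aligning the normalizations in the Shimura reciprocity law with those in the definition of $y_c$ and of the Atkin--Lehner involution, and ensuring the identifications of lattices via $K^\times$-rescaling carry the marked cyclic subgroup to the correct one. Once those conventions are fixed as in \cite[\S 5]{Gross84}, the two sides of the asserted identity are identified on the nose, completing the proof.
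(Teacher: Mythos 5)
The paper itself does not give an argument for this proposition: it simply cites Gross \cite[\S 5]{Gross84} and moves on. Your reconstruction follows the correct strategy (the same moduli-theoretic computation that Gross carries out), but as written it has two real problems that cannot be dismissed as ``purely bookkeeping.''

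First, to move from $\BC/\mathfrak{n}_c^{-1} \to \BC/\mathfrak{n}_c^{-1}\ov{\mathfrak{n}_c}^{-1}$ to a lattice pair with first lattice $\mathfrak{n}_c$, you claim to ``rescale by an element of $K^\times$'' and then ``multiply both lattices by $\mathfrak{n}_c$.'' These are not the same thing: multiplying by the ideal $\mathfrak{n}_c$ is a rescaling by an element of $K^\times$ only when $\mathfrak{n}_c$ is principal, which it generally is not. On $X_0(N)$, two pairs of lattices represent the same point only if they differ by a common $\BC^\times$-scalar, so this step silently changes the point.

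Second, with the normalization of Shimura reciprocity you state ($\sigma_t$ sends the lattice $\mathfrak{a}$ to $t^{-1}\mathfrak{a}$, so $\sigma_{t_0^{-1}}$ multiplies by $\sN$, i.e.\ by $\mathfrak{n}_c$ at level $c$), your computation lands on the Heegner point whose associated ideal class is $[\mathfrak{n}_c]$, whereas the Atkin--Lehner computation lands on the one with class $[\mathfrak{n}_c^{-1}]$ (note $N\mathfrak{n}_c^{-1}=\ov{\mathfrak{n}_c}$ and $N\in K^\times$). These are genuinely different points on $X_0(N)$ unless $[\mathfrak{n}_c]^2$ is trivial in $\Pic(\CO_c)$. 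So the two sides do \emph{not} ``identify on the nose'' under the convention you spell out; the identity as stated requires the opposite normalization of the reciprocity map (or, with yours, it would instead read $y_c^w = \ov{y_c}^{\sigma_{t_0}}$). This sign in the reciprocity law is exactly the content one must pin down to match Gross's statement, and it is the place where your argument, as written, is incorrect rather than merely incomplete.
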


Let $q_1,\cdots,q_r$ be distinct admissible primes for $E$, and set $M=q_1^*\cdots q_r^*$.  Let 
$$
H_0=K(\sqrt{q_1^*},\cdots,\sqrt{q_r^*})\subset H_M
$$ 
be the genus field. We define the genus Heegner point
$$z_0:=\tr_{H_M/H_0}f(y_M)\in E(H_0).$$

\begin{thm}\label{genus}
The genus Heegner point $z_0$ satisfies
\[z_0^{\sigma_{t_0}}+\ov{z_0}=T,\]
where $T=[H_M:H_0]f([0])\in E(\BQ)$. 
\end{thm}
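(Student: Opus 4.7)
The plan is to extract the identity from Proposition \ref{Galois} by applying the modular parametrization $f$ and then taking the trace down to $H_0$.

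The first step is to translate the Atkin--Lehner involution $w$ into a formula on $E$. Writing the modular parametrization as $f(x) = \wt{f}(x - [\infty])$, where $\wt{f}\colon J_0(N) \to E$ is the induced map on Jacobians, and using that $w$ swaps the cusps $[0]$ and $[\infty]$ while acting on the associated newform by its Atkin--Lehner eigenvalue $\epsilon_N$, a short divisor manipulation gives
\[f(wx) = \epsilon_N\bigl(f(x) - f([0])\bigr), \qquad x \in X_0(N).\]
Since $L(E,1)\neq 0$ by Proposition \ref{L1}, the global root number of $E$ equals $+1$, which forces $\epsilon_N = -1$; thus $f(wx) = f([0]) - f(x)$.

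Next, I would apply $f$ to the identity $y_M^w = \ov{y_M}^{\sigma_{t_0^{-1}}}$ of Proposition \ref{Galois}. Since $f$ is defined over $\BQ$, it commutes with complex conjugation and with the Galois action, so
\[f([0]) - f(y_M) = \ov{f(y_M)}^{\sigma_{t_0^{-1}}}.\]
Applying $\sigma_{t_0}$ and using that $f([0]) \in E(\BQ)$ is fixed, this simplifies to the pointwise identity
\[f(y_M)^{\sigma_{t_0}} + \ov{f(y_M)} = f([0]).\]

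Finally, I would take the trace $\tr_{H_M/H_0}$. Since $\Gal(H_M/K)$ is abelian (as a ring class field), $\sigma_{t_0}$ commutes with every element of $\Gal(H_M/H_0)$; and since both $H_M$ and $H_0$ are Galois over $\BQ$, complex conjugation normalizes $\Gal(H_M/H_0)$. Together these imply that $\tr_{H_M/H_0}$ commutes with both $\sigma_{t_0}$ and complex conjugation when applied to points in $E(H_M)$. Averaging the pointwise identity over $\Gal(H_M/H_0)$ therefore yields
\[z_0^{\sigma_{t_0}} + \ov{z_0} = \tr_{H_M/H_0} f([0]) = [H_M : H_0]\, f([0]) = T,\]
as claimed. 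The only nontrivial input beyond bookkeeping is the sign computation $\epsilon_N = -1$, which rests on $L(E,1) \neq 0$; everything else is a straightforward transport of Proposition \ref{Galois} through $f$ followed by averaging.
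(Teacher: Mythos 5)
Your proof is correct and takes essentially the same route as the paper's: the paper's computation begins from the (unproved but standard) identity $f(y_M^w)+f(y_M)=f([0])$, which is exactly what you derive from $\epsilon_N=-1$, and then both arguments substitute Proposition \ref{Galois} and sum over $\Gal(H_M/H_0)$. The only differences are cosmetic: you make the Atkin--Lehner sign computation explicit and apply $\sigma_{t_0}$ pointwise before taking the trace, whereas the paper takes the trace first and acts by $\sigma_{t_0}$ at the end.
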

\begin{proof}
By Proposition \ref{Galois} we have
\begin{eqnarray*}
T&=&\sum_{\sigma\in \Gal(H_M/H_0)} f([0])^\sigma=\sum_{\sigma\in \Gal(H_M/H_0)} (f(y_M^w)+f(y_M))^\sigma\\
&=&\sum_{\sigma\in \Gal(H_M/H_0)} (f(\ov{y_M}^{\sigma_{t_0^{-1}}})+f(y_M))^\sigma\\
&=&\ov{z_0}^{\sigma_{t_0^{-1}}}+z_0.
\end{eqnarray*}
The theorem follows by acting $\sigma_{t_0}$ on the above equality and noting that $T^{\sigma_{t_0}}=T$.
\end{proof}

\begin{lem}\label{tor}
We have $E(H_0)[2^\infty]=E(\BQ)[2].$
\end{lem}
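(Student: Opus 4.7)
The plan is to exhibit a prime $q$ that is admissible for $E$ and splits completely in $H_0/\BQ$; reduction modulo a prime $\mathfrak{q}$ of $H_0$ above $q$ will then embed $E(H_0)[2^\infty]$ into $E(\BF_q)$, whose Sylow $2$-subgroup has order exactly $2$ by Proposition \ref{Hecke3}. Combined with the inclusion $E(\BQ)[2]\subset E(H_0)[2^\infty]$ supplied by Proposition \ref{tor}, this will force the claimed equality $E(H_0)[2^\infty]=E(\BQ)[2]$.

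The crux is the linear disjointness $\BQ(E[2])\cdot \BQ(E'[2])\cap H_0 = \BQ$. Using the Weierstrass model from Proposition \ref{Delta}, $\BQ(E[2])=\BQ(\sqrt{a^2-4b})$ and $\BQ(E'[2])=\BQ(\sqrt{b})$, so each nontrivial quadratic subfield of the biquadratic compositum is $\BQ(\sqrt{c})$ with $c\in\{b,\,a^2-4b,\,b(a^2-4b)\}$, and the odd prime factors of the square-free part of any such $c$ are primes of bad reduction of $E$ (or $E'$), hence divide $N$. On the other hand, each nontrivial quadratic subfield of $H_0$ is $\BQ(\sqrt{d})$ with $d$ in the subgroup $V\subset\BQ^\times/(\BQ^\times)^2$ generated by $-p,\,q_1^*,\ldots,q_r^*$; since $p,q_1,\ldots,q_r$ are pairwise distinct odd primes, no cancellation occurs, so the odd-prime support of any nontrivial element of $V$ is a nonempty subset of $\{p,q_1,\ldots,q_r\}$. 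By the Heegner hypothesis $p\nmid N$, and by admissibility each $q_i$ is coprime to $2N$, so the two possible odd-prime supports are disjoint. A common quadratic subfield would therefore force the shared square-free part to lie in $\{\pm 1,\pm 2\}$, but no such element lies in $V$ (nontrivial elements of $V$ have nonempty odd-prime support), ruling this out.

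Given the disjointness, Chebotarev applied to the abelian extension $\tilde H=\BQ(E[2])\cdot \BQ(E'[2])\cdot H_0$ produces infinitely many primes $q$ whose Frobenius is trivial on $H_0$ and nontrivial on both $\BQ(E[2])$ and $\BQ(E'[2])$. Any such $q$ is admissible (Definition \ref{def1}) and splits completely in $H_0$. Choosing a prime $\mathfrak{q}$ of $H_0$ above $q$, the completion $H_{0,\mathfrak{q}}\cong \BQ_q$, and $E$ has good reduction at $q$ since $q\nmid N$, so reduction gives the standard injection $E(H_0)[2^\infty]\hookrightarrow E(\BF_q)$ (injectivity on torsion prime to the residue characteristic). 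Since $|E(\BF_q)|\equiv 2\pmod 4$ by Proposition \ref{Hecke3}, we obtain $|E(H_0)[2^\infty]|\leq 2$, and the lemma follows. The main obstacle is the linear disjointness step: it needs careful matching of the ramification profiles of $H_0$, $\BQ(E[2])$, and $\BQ(E'[2])$, together with the behavior at $2$ and at $\infty$ controlled by the signs $\Delta_E<0$ and $\Delta_{E'}>0$ from Proposition \ref{Delta}; once that is settled, the Chebotarev-plus-reduction conclusion is routine.
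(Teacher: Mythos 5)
Your proof is correct, but it takes a genuinely different route from the paper's. The paper argues directly by ramification: for $P\in E(H_0)[2^\infty]$, good reduction outside $N$ forces $\BQ(P)$ to be unramified outside $2N$; but every nontrivial subfield of $H_0=\BQ(\sqrt{-p},\sqrt{q_1^*},\ldots,\sqrt{q_r^*})$ is ramified at some prime in $\{p,q_1,\ldots,q_r\}$, all coprime to $2N$, so $\BQ(P)=\BQ$ and hence $P\in E(\BQ)[2^\infty]=E(\BQ)[2]$ by Proposition~\ref{tor}. You instead manufacture a single admissible prime $q$ splitting completely in $H_0$ via Chebotarev and reduce modulo a prime of $H_0$ above it, using Proposition~\ref{Hecke3} to cap $|E(\BF_q)[2^\infty]|$ at $2$. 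Both are valid; the paper's argument is shorter and avoids Chebotarev entirely. Also, your linear disjointness step, though correct, is more elaborate than necessary: rather than tracking the Weierstrass coefficients $b$ and $a^2-4b$, you could simply observe (exactly as in the paper's argument) that the compositum $\BQ(E[2])\cdot\BQ(E'[2])$ is unramified outside $2N$ because $E$ and its isogenous curve $E'$ both have conductor $N$, while $H_0$ is ramified only at the odd primes $p,q_1,\ldots,q_r$ coprime to $2N$ (each radicand $-p,q_i^*$ being $\equiv 1\bmod 4$), so any common subfield is unramified everywhere and hence equal to $\BQ$. Finally, a small point of bookkeeping: Chebotarev gives infinitely many primes with the right Frobenius, and one must discard the finitely many dividing $2N$ to ensure admissibility in the sense of Definition~\ref{def1}; your write-up elides this but the fix is trivial.
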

\begin{proof}
Suppose the contrary and $P\in E(H_0)[2^\infty]\backslash E(\BQ)[2]$. Since $E$ has good reduction outside $N$, it follows that $\BQ(P)$ is unramified outside $2N$. Then we have $\BQ(P)=\BQ$ by noting $H_0=\BQ(\sqrt{-p},\sqrt{q_1^*},\cdots,\sqrt{q_r^*})$, which is a contradiction.
\end{proof}

\begin{defn}\label{prime-class}
An admissible prime $q$ is of first kind, if $q \equiv 1 \mod 4$ is inert in $K$, or $q \equiv 3 \mod 4$ splits in $K$; otherwise, $q$ is of second kind.
\end{defn}

\begin{coro}\label{split}
If all prime factors of $M$ are of first kind, then the two primary component $T_2$ of $T$ is of order $2$ and $z_0$ is of infinite order; otherwise $T_2=0$.
\end{coro}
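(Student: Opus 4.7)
The plan is to first determine the $2$-primary component $T_2$ of $T$ and then deduce the statement on $z_0$ from Theorem \ref{genus}. Since $f([0]) \in E(\BQ)$ is torsion by Manin--Drinfeld and $E[2^\infty](\BQ) = E[2](\BQ) = \BZ/2\BZ$ under Condition $\mathrm{(Tor)}$, the hypothesis $f([0]) \notin 2E(\BQ)$ forces the $2$-primary part of $f([0])$ to equal the unique nontrivial rational $2$-torsion point, which we call $P_2$. Indeed, if this $2$-primary part vanished then $f([0])$ would have odd order and hence lie in $2E(\BQ)$. Therefore $T_2 = [H_M : H_0]\cdot P_2$, which has order $2$ when $[H_M : H_0]$ is odd and is zero otherwise.

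The crux is then the parity of $[H_M : H_0]$, which I would read off from the standard exact sequence
$$
1 \to \CO_K^\times/\CO_M^\times \to (\CO_K/M\CO_K)^\times/(\BZ/M\BZ)^\times \to \Pic(\CO_M) \to \Pic(\CO_K) \to 1
$$
combined with $\Gal(H_M/K) \cong \Pic(\CO_M)$. Since $p > 3$, we have $\CO_K^\times = \{\pm 1\}$, and since the discriminant $-p$ has a single prime factor, genus theory for $K$ gives $\Pic(\CO_K)$ of odd order. Hence the $2$-part of $|\Pic(\CO_M)|$ equals $\prod_{i=1}^r |C_i|_2$, where $C_i := (\CO_K/q_i)^\times/(\BZ/q_i)^\times$ is cyclic of order $q_i - 1$ if $q_i$ splits in $K$ and of order $q_i + 1$ if $q_i$ is inert, and $|\cdot|_2$ denotes the $2$-part. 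On the other hand, the elements $-p, q_1^*, \ldots, q_r^*$ are linearly independent modulo squares in $\BQ^\times$ (they are $\pm 1$ times distinct primes), so $[H_0 : K] = 2^r$ and the $2$-part of $[H_M : H_0]$ equals $\prod_i |C_i|_2 / 2^r$. This is odd precisely when every $|C_i|_2 = 2$; a direct inspection of the four cases of $q_i \bmod 4$ versus splitting behaviour in $K$ shows that this happens exactly when $q_i$ is of first kind for all $i$, and otherwise some $|C_i|_2 \geq 4$. This settles the claim on $T_2$.

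Finally, assume all $q_i$ are of first kind and suppose for contradiction that $z_0$ has finite order. By Lemma \ref{tor} its $2$-primary component $(z_0)_2$ lies in $E(\BQ)[2]$, hence is fixed by both $\sigma_{t_0}$ (which acts as the identity on $\BQ \subset K$) and complex conjugation. Extracting the $2$-primary part from the identity $T = z_0^{\sigma_{t_0}} + \ov{z_0}$ of Theorem \ref{genus} then yields $T_2 = 2(z_0)_2 = 0$, contradicting $T_2 = P_2 \neq 0$. The main obstacle will be the genus-theoretic parity count involving $\Pic(\CO_M)$ and the genus field; once this is in hand, the remainder of the argument is essentially formal.
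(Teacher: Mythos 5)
Your proposal is correct and follows essentially the same approach as the paper: compute the parity of $[H_M:H_0]$ to determine $T_2$, then derive a contradiction from Theorem \ref{genus} if $z_0$ were torsion. The only difference is that you carefully derive the index formula from the class-group exact sequence and explicitly invoke genus theory to see $h_K$ is odd, whereas the paper simply asserts the closed-form index (implicitly absorbing the odd factor $h_K$); the final contradiction argument is the same as the paper's odd-scaling trick.
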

\begin{proof}
Just note that the degree
\[[H_M:H_0]=\prod_{\begin{subarray}{c}q \mid M\\ \text{inert in }K\end{subarray}}\frac{q+1}{2}\cdot \prod_{\begin{subarray}{c}q \mid M\\ \text{split in }K\end{subarray}}\frac{q-1}{2}.\]
It follows that $[H_M:H_0]$ is odd, if all prime factors of $M$ are of first kind. By Proposition \ref{tor}, $E[2^\infty](\BQ)=E[2](\BQ)$ and $f([0])\not\in 2E(\BQ)$, we see that $T_2$ is of order two. Otherwise, $T_2=0$. 

In the case $T_2$ is of order $2$, we suppose $z_0$ is torsion, and choose sufficiently large odd $C$ so that $Cz_0\in E(H_0)[2^\infty]=E(\BQ)[2]$ and $CT=T_2$. Then 
\begin{equation}\label{simplification}
T_2=CT=(Cz_0^{\sigma_{t_0}}+\ov{Cz_0})=2Cz_0=0,
\end{equation}
which is a contradiction. Hence in this case $z_0$ is of infinite order.
\end{proof}

We only consider divisors of $M$ which are {\em congruent to $1 \mod 4$} so that $\sqrt{d}\in H_0$. For any such $d\mid M$, let $\chi_d:\Gal(H_0/K)\ra \mu_2$ be the quadratic character associated to the quadratic extension $K(\sqrt{d})/K$ through class field theory. Then the character $\chi_d$ is explicitly given by $\chi_d(\sigma)=(\sqrt{d})^{\sigma-1}$. Define the twisted Heegner points
\begin{equation}\label{z_d}
z_d=\sum_{\sigma\in \Gal(H_M/K)} \chi_d(\sigma)f(y_M)^\sigma=\sum_{\sigma\in \Gal(H_0/K)} \chi_d(\sigma )z_0^{\sigma}\in E(K(\sqrt{d}))^-,
\end{equation}
where the superscript ``$-$" means the eigenspace that the generator of $\Gal(K(\sqrt{d})/K)$ acts as $-1$.

\begin{prop}\label{relation}
We have
\[\ov{z_d}+z_d^{\sigma_{t_0}}=0,\]
and 
\[\sum_{\begin{subarray}{c}d \mid M\\ d\equiv 1\mod 4\end{subarray}} z_d=2^rz_0.\]
\end{prop}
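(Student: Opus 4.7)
The plan is to reduce both identities to character-theoretic manipulations bootstrapped onto the genus identity $z_0^{\sigma_{t_0}} + \overline{z_0} = T$ of Theorem \ref{genus}. The only structural inputs needed beyond this are (i) that $\Gal(H_0/K) \cong (\BZ/2\BZ)^r$, so the characters $\{\chi_d\}$ form its dual group, and (ii) the dihedral relation $c\sigma c^{-1} = \sigma^{-1}$ for complex conjugation $c$ and any $\sigma \in \Gal(H_0/K)$, which is the standard property of ring class fields over an imaginary quadratic field.

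For the first identity, apply complex conjugation termwise to the definition of $z_d$. Using the dihedral relation one rewrites $\overline{z_0^\sigma} = \overline{z_0}{}^{\sigma^{-1}}$, and then reindexes by $\sigma \mapsto \sigma^{-1}$; since $\chi_d$ has order two, $\chi_d(\sigma^{-1}) = \chi_d(\sigma)$, so the reindexing is harmless. Substituting $\overline{z_0} = T - z_0^{\sigma_{t_0}}$ from Theorem \ref{genus} and using $T^\sigma = T$ (since $T \in E(\BQ)$) together with the commutativity of $\Gal(H_0/K)$ to pull $\sigma_{t_0}$ outside the sum, one arrives at
\[
\overline{z_d} \;=\; T \cdot \sum_{\sigma} \chi_d(\sigma) \;-\; z_d^{\sigma_{t_0}}.
\]
For $d > 1$ the character $\chi_d$ is nontrivial, so the first term vanishes, yielding $\overline{z_d} + z_d^{\sigma_{t_0}} = 0$.

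For the second identity, the key observation is that for any admissible prime $q$ we have $q^* \equiv 1 \mod 4$ (immediate from $q^* = \left(\tfrac{-1}{q}\right)q$), so every divisor $d \mid M$ automatically satisfies $d \equiv 1 \mod 4$ and the indexing set of the sum is the full divisor set of $M$. Combined with the linear independence of $\sqrt{q_1^*}, \ldots, \sqrt{q_r^*}$ over $K$ (a standard discriminant argument, using that the $q_i$ are distinct primes coprime to $2p$), this shows that $\{\chi_d : d \mid M\}$ runs through the full character group of $\Gal(H_0/K)$. Swapping summations,
\[
\sum_{d \mid M} z_d \;=\; \sum_{\sigma \in \Gal(H_0/K)} \Bigl(\sum_{d \mid M} \chi_d(\sigma)\Bigr) z_0^{\sigma},
\]
and orthogonality of characters kills every term except $\sigma = 1$, leaving $2^r z_0$.

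No substantial obstacle appears; both parts are elementary once Theorem \ref{genus} and the structure of $\Gal(H_0/K)$ are in hand. The only subtle point worth flagging is the boundary case $d=1$ of the first identity: there $\chi_d$ is trivial, the character sum equals $2^r$, and one instead obtains $\overline{z_1} + z_1^{\sigma_{t_0}} = 2^r T$. This is consistent with the fact that $z_1 \in E(K)$ lies in the plus-eigenspace rather than in an antisymmetric one, and it does not affect the way the proposition is used in the sequel.
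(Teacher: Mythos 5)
Your proof is correct, and for the first identity it takes a genuinely different route than the paper. The paper re-derives $\ov{z_d} + z_d^{\sigma_{t_0}} = 0$ from scratch by going back to the Heegner points $y_M$ on $X_0(N)$ and re-running the Atkin--Lehner computation ($y_M^w = \ov{y_M}^{\sigma_{t_0^{-1}}}$, $f(y_M^w) + f(y_M) = f([0])$) exactly as in the proof of Theorem \ref{genus}, only with a $\chi_d$-twist inserted into the sum over $\Gal(H_M/K)$; the cancellation then comes from $\sum_\sigma \chi_d(\sigma) = 0$. You instead treat Theorem \ref{genus} as a black box: apply $\sigma \in \Gal(H_0/K)$ to $z_0^{\sigma_{t_0}} + \ov{z_0} = T$, use that complex conjugation commutes with $\Gal(H_0/K)$ (which here is automatic, since $\Gal(H_0/\BQ)$ is elementary abelian of exponent $2$), multiply by $\chi_d(\sigma)$, and sum. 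This is more economical — it avoids repeating the Atkin--Lehner argument at the level of $X_0(N)$ — and it exposes cleanly where the nontriviality of $\chi_d$ enters. Your flagging of the $d=1$ boundary case is a real improvement in rigor: as stated the first identity fails for $d=1$ (you correctly compute $\ov{z_1} + z_1^{\sigma_{t_0}} = 2^rT = [H_M:K]f([0])$, which matches what the paper's computation would give if one set $\chi_d$ trivial), and the paper's ``We have $\ov{z_d} + z_d^{\sigma_{t_0}} = 0$'' is implicitly restricted to $d \neq 1$; as you note, only $d=M$ is used downstream in the descent step of Theorem \ref{divisibility}, so nothing breaks. For the second identity your proof coincides with the paper's (the paper calls it ``standard and straight-forward''); your additional remark that $q^* \equiv 1 \bmod 4$ forces every divisor of $M$ to be $\equiv 1 \bmod 4$, so the indexing set really is the full dual group of $\Gal(H_0/K)$, is a correct and useful clarification of a point the paper leaves implicit.
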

\begin{proof}
The proof of the first assertion is similar to that of Theorem \ref{genus}. By Proposition \ref{Galois} we have
\begin{eqnarray*}
\sum_{\sigma\in \Gal(H_M/K)} \chi_d(\sigma)f([0])^\sigma&=&\sum_{\sigma\in \Gal(H_M/K)} \chi_d(\sigma)(f(y_M^w)+f(y_M))^\sigma\\
&=&\sum_{\sigma\in \Gal(H_M/K)} \chi_d(\sigma)(f(\ov{y_M}^{\sigma_{t_0^{-1}}})+f(y_M))^\sigma\\
&=&\ov{z_d}^{\sigma_{t_0^{-1}}}+z_d.
\end{eqnarray*}
On the other hand, we have 
\[\sum_{\sigma\in \Gal(H_M/K)} \chi_d(\sigma)f([0])^\sigma=\left(\sum_{\sigma}\chi_d(\sigma)\right)f([0])=0.\]
The second assertion is standard and straight-forward.
\end{proof}
\begin{thm}\label{divisibility}
The Heegner point $z_M$ has exact $2$-divisibility index $r-1$: 
\[z_M\in \left(2^{r-1}E(\BQ(\sqrt{-pM}))^-+E(\BQ(\sqrt{-pM}))_\tor \right)\left \backslash \left(2^rE(\BQ(\sqrt{-pM}))^-+E(\BQ(\sqrt{-pM}))_\tor \right),\right.\]
where the superscript ``$-$" means the eigenspace that the generator of $\Gal(\BQ(\sqrt{-pM})/\BQ)$ acts as $-1$.
\end{thm}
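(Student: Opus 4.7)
My plan is to induct on $r$, the number of admissible prime divisors of $M$, carrying out all computations in $E(H_0) \otimes \BZ_2$ modulo torsion. Set $F := \BQ(\sqrt{-pM})$ and $G := \Gal(H_0/K) \cong (\BZ/2\BZ)^r$, generated by involutions $\tau_1, \ldots, \tau_r$ with $\tau_i$ fixing $\sqrt{q_j^*}$ for $j \ne i$ and sending $\sqrt{q_i^*}$ to $-\sqrt{q_i^*}$. By construction, $z_M = \prod_{i=1}^r (1 - \tau_i)\, z_0$, and Proposition \ref{relation}, combined with the action of complex conjugation, identifies the subspace of $E(H_0)$ containing $z_M$ with $E(F)^-$ modulo $2$-torsion. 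Three ingredients drive the induction: the sum relation $\sum_{d \mid M,\ d \equiv 1 \mod 4} z_d = 2^r z_0$, the genus identity $z_0^{\sigma_{t_0}} + \overline{z_0} = T = [H_M:H_0]\, f([0])$, and the Euler-system norm relations $\tr_{H_{Mq}/H_M} f(y_{Mq}) = a_q\, f(y_M)$ for $q$ inert in $K$ (with the usual Frobenius correction when $q$ splits).

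For the base case $r = 1$, write $M = q^*$; the sum relation gives $z_{q^*} = 2z_0 - (z_0 + z_0^{\tau_1})$, and the genus identity rewrites $z_0^{\tau_1}$ in terms of $\overline{z_0}$ and $T$. When $q$ is of the first kind, $[H_M:H_0]$ is odd, so $T_2 \ne 0$ (using $f([0]) \notin 2E(\BQ)$), and a direct check shows that $z_{q^*}$ is indivisible by $2$ modulo torsion in $E(F)^-$. When $q$ is of the second kind, $T_2 = 0$; the non-triviality modulo $2$ must then come from the norm relation, exploiting the precise valuations $v_2(a_q) = 1$ when $q \equiv 3 \mod 4$ and $v_2(a_q) \ge 2$ when $q \equiv 1 \mod 4$ supplied by Proposition \ref{Hecke3}.

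For the inductive step, write $M = M'q^*$ with $M'$ having $r - 1$ admissible prime divisors, and let $z_0$ and $z_0'$ denote the genus Heegner points at levels $M$ and $M'$, respectively. Applying the Euler-system norm relation and then averaging over the appropriate Galois groups yields a congruence of the shape $\tr_{H_0/H_0'}(z_0) \equiv a_q\, z_0' \pmod{\mathrm{torsion}}$. Combined with the inductive hypothesis that $z_{M'}$ has exact $2$-divisibility $r - 2$ in $E(\BQ(\sqrt{-pM'}))^-$ modulo torsion, an analysis of how $(1 - \tau_r)$ acts in the $(-)$-eigenspace yields $v_2(z_M) \ge r - 1$. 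For the matching upper bound, if $z_M$ lay in $2^r E(F)^- + E_\tor$, then the sum relation $2^r z_0 = z_M + \sum_{d \mid M,\ d \ne M,\ d \equiv 1 \mod 4} z_d$, combined with the inductive hypothesis applied to each $z_d$ (having strictly fewer admissible prime factors), would force $z_0$ itself to be too $2$-divisible, contradicting Corollary \ref{split} in the first-kind case and a norm-relation argument in general.

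The main obstacle is keeping the $2$-adic valuations \emph{exact} rather than merely bounded below, through the repeated projections to eigenspaces and changes of base field that the induction forces. The genuinely new point compared to \cite{Tian14}, \cite{CLTZ15}, \cite{CLW16} is the uniform handling of the case in which not all admissible $q_i$ are of the first kind: then $T_2 = 0$ and the divisibility cannot be read off from the genus identity alone, but must be driven by the precise $2$-adic valuations of the Hecke eigenvalues $a_{q_i}$ entering the norm relations. Orchestrating these cancellations case by case — as $q_i$ varies over inert/split in $K$ and over congruence classes mod $4$ — is the technical core of the argument.
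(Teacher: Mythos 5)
Your high-level plan — induction on $r$ driven by the sum relation, the genus identity, and the Euler-system norm relations, with the precise $2$-adic valuations of $a_q$ distinguishing first- and second-kind primes — is indeed the engine of the paper's proof, and you have correctly identified the novelty (handling $T_2 = 0$). But there are genuine gaps in the way you propose to run the induction.

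First, and most seriously: you propose to work ``in $E(H_0)\otimes\BZ_2$ modulo torsion,'' but this discards exactly the information that the argument needs. The paper's inductive statement is not a $2$-adic valuation bound; it is the precise Galois identity
\[
z_M = 2^r x_M \quad\text{with}\quad \overline{x_M} + x_M^{\sigma_{t_0}} = S,
\]
where $S$ is a nontrivial $2$-torsion point (the $2$-primary part of $f([0])$). The \emph{indivisibility} of $z_M$ is then read off directly from this identity: if $z_M = 2^r y + t$ with $y \in E(\BQ(\sqrt{-pM}))^-$ and $t$ torsion, then $x_M = y + t'$, and since $a\mapsto \overline{a}^{\sigma_{t_0}}$ generates $\Gal(\BQ(\sqrt{-pM})/\BQ)$ one gets $S = \overline{y} + y^{\sigma_{t_0}} + 2t' = 2t'$, which forces the $2$-primary part of $S$ to vanish — a contradiction. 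Your proposed route to indivisibility (push divisibility of $z_M$ through the sum relation to force $z_0$ ``too divisible,'' then contradict Corollary~\ref{split}) does not actually close: Corollary~\ref{split} says nothing about the $2$-divisibility of $z_0$, only whether $T_2$ vanishes and whether $z_0$ has infinite order. Moreover, in the second-kind case ($T_2 = 0$) the mechanism that salvages the identity is a parity count — the number of divisors $d\ne M$ with $M_1\mid d$ is odd, so $\sum_{M_1\mid d,\, d\ne M} S = S$ — and none of this survives a ``modulo torsion'' reduction.

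Second, your proposal skips the descent step. The induction only produces $z_M \in 2^r E(H_0)$; to conclude $z_M \in 2^{r-1}E(\BQ(\sqrt{-pM}))^- + E(\BQ(\sqrt{-pM}))_\tor$, the paper applies the inflation–restriction sequence
\[
0 \to \RH^1(H_0/K(\sqrt{M}), E(H_0)[2^r]) \to \RH^1(K(\sqrt{M}), E[2^r]) \to \RH^1(H_0, E[2^r]),
\]
together with $E(H_0)[2^\infty] = E(\BQ)[2]$, to get $2z_M \in 2^r E(K(\sqrt{M}))$, and then a root-number computation ($\chi_M(\sigma_{t_0})\sgn(M) = (\tfrac{M}{-N}) = +1$, which uses nonvanishing of $L(E^{(M)},1)$) to place $2z_M$ in the correct minus eigenspace $E(\BQ(\sqrt{-pM}))^-$. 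Both of these steps are absent from your plan. Finally, the paper's base case is $r=0$ (not $r=1$); starting at $r=1$ without reducing to that case leaves the induction ungrounded.
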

\begin{remark}
If $r=0$, i.e., $M=1$, the statement in the theorem exactly means $$2z_1\in E(\BQ(\sqrt{-p}))^- \text{ but }z_1 \not \in \left(E(\BQ(\sqrt{-p}))^-+E(\BQ(\sqrt{-p}))^-_\tor \right).$$
\end{remark}
\begin{proof}
In the following, without loss of generality we may replace $S=f([0])$, if necessary, by its two primary component $S_2\in E(\BQ)[2]$ for simplification. Indeed, we may do this as in (\ref{simplification})  by   multiplying all points by a sufficiently large odd integer. Let $r(M)$ denote the number of distinct prime factors of $M$. We  proceed by  induction on $r=r(M)$ to prove 
\begin{equation}\label{ind}
z_M=2^r x_M \text{ for some $x_M\in E(H_0)$ with $\ov{x_M}+x_M^{\sigma_{t_0}}=S$. }
\end{equation}
And then we deduce the assertion in the theorem from this property (\ref{ind}).

First we consider the initial case $r=0$ i.e., $M=1$. Then $H_0=K$ and $z_M=z_0$ and take $x_M=z_M\in E(K)$. By Theorem \ref{genus} we have 
\begin{equation}\label{ind0}
\ov{z_M}+{z_M}^{\sigma_{t_0}}=S.
\end{equation}
We prove the theorem in this case $M=1$ as follows. First, note that $2\ov{z_M}=-2z_M$ and hence $2z_1\in E(\BQ(\sqrt{-p}))^-$. Suppose the contrary that $z_M=y+s$ with $y\in  E(\BQ(\sqrt{-p}))^-$ and $s\in E(\BQ(\sqrt{-p}))^-_\tor$. By Lemma \ref{tor}, we can choose $C$  a sufficiently large odd integer such that $Cs\in E(\BQ)[2]$. Then from (\ref{ind0}) we have 
\[S=CS=C(z_M+\ov{z_M})=C(y+\ov y)+2Cs=2Cs=0,\]
which is a contradiction.

Next assume $r>0$. For any proper  $d\mid M$ with $d\equiv 1\mod 4$, let $d'=M/d$ and $d'^+$ (respectively, $d'^-$) be the product of prime factors of $d'$ that split (respectively, are inert) in $K$.  By Euler system property \cite{Kolyvagin1990}, we have
\[z_d=\tr_{H_M/H_{d}} f(y_M)=\left(\prod_{q\mid d'^+} (a_q-\sigma_v-\sigma_{\ov v})\cdot \prod_{q\mid d'^-} a_q\right)z_d',\]
where 
\[z_d'=\sum_{\sigma\in \Gal(H_{d}/K)}\chi_d(\sigma)f(y_d).\]
Here $v$ and $\ov v$ are the two places of $K$ above $q$ if $q \mid d'^+$, and $\sigma_v$ and $\sigma_{\ov v}$ are the corresponding Frobenius automorphisms, respectively.  
Note that we have \[\chi_d(\sigma_v)=\chi_d(\sigma_{\ov v})=\left(\frac{d}{q}\right)=\pm 1,\] 
and
\[\sigma_v(z_d')=\chi_d(\sigma_v)z_d'  \text{ \ and \ } \sigma_{\ov{v}}(z_d')=\chi_d(\sigma_{\ov v})z_d'.\]
By the condition (\ref{Hecke}) on Hecke eigenvalues, we know for all $q\mid M$ we have 
\[a_q\equiv 1-\left(\frac{-1}{q}\right)\mod 4.\]

For any proper divisor $d\mid M$ with $d\equiv 1\mod 4$, we put 
\[A_d=2^{-\mu(d')}\left(\prod_{q\mid d'^+} (a_q-2\left(\frac{d}{q}\right))\cdot \prod_{q\mid d'^-} a_q\right).\]
Let $M_1$ (respectively, $M_2$) be the product of prime factors of $M$ which are of first (respectively, second) kind.
From the condition (\ref{Hecke}) for Hecke eigenvalues and Definition \ref{prime-class}, we see the following two facts:

(\romannumeral1) If some prime factor of $d'$ is of first kind, i.e., $M_1\nmid d$, then $A_d$ is an even integer.

(\romannumeral2) If all prime factors of $d'$ are of second kind, i.e., $M_1\mid d$, then $A_d$ is an odd integer.

\noindent By the induction hypothesis, we have 
\[z_d'=2^{\mu(d)}x_d' \in 2^{\mu(d)}E(H_0),\]
with $\ov{x_d'}+x_d'^{\sigma_{t_0}}=S$.
Hence 
\[z_d=2^rA_d x_d'\in 2^r E(H_0).\]
Then 
\[z_M=2^r\left(z_0-\sum_{\begin{subarray}{c}M_1 \nmid d\\ d\neq M\end{subarray}}A_dx_d'-\sum_{\begin{subarray}{c}M_1 \mid d\\ d\neq M\end{subarray}}A_dx_d'\right)\in 2^rE(H_0).\]
Denote
\[x_M=z_0-\sum_{\begin{subarray}{c}M_1 \nmid d\\ d\neq M\end{subarray}}A_dx_d'-\sum_{\begin{subarray}{c}M_1 \mid d\\ d\neq M\end{subarray}}A_dx_d'\in E(H_0).\]
If $M_1\nmid d$, then $A_d$ is even and
\[A_d(\ov{x_d'}+{x_d'}^{\sigma_{t_0}})=0.\]
If $M_1\mid d$, then $A_d$ is odd and
\[A_d(\ov{x_d'}+{x_d'}^{\sigma_{t_0}})=S.\]
Then 
\begin{equation*}
\ov{x_M}+x_M^{\sigma_{t_0}}=\ov{z_0}+z_0^{\sigma_{t_0}}+\sum_{\begin{subarray}{c}M_1 \mid d\\ d\neq M\end{subarray}}A_d(\ov{x_d'}+\ov{x_d'}^{\sigma_{t_0}}).
\end{equation*}
If all prime factors of $M$ are of first kind, i.e., $M_1=M$ and $M_2=1$, then there are no terms with $M_1\mid d$ and by Corollary \ref{split}, we have 
\[\ov{x_M}+x_M^{\sigma_{t_0}}=\ov{z_0}+z_0^{\sigma_{t_0}}=S.\]
If some prime factors of $M$ are of second kind, again by Corollary \ref{split}, we have
\begin{equation}\label{novelty}
\ov{z_0}+z_0^{\sigma_{t_0}}=0,
\end{equation}
and hence
\begin{equation*}
\ov{x_M}+x_M^{\sigma_{t_0}}=\sum_{\begin{subarray}{c}M_1 \mid d\\ d\neq M\end{subarray}}A_d(\ov{x_d'}+\ov{x_d'}^{\sigma_{t_0}})=\sum_{\begin{subarray}{c}M_1 \mid d\\ d\neq M\end{subarray}} S=S.
\end{equation*}
In conclusion, we have 
\begin{equation}\label{x_M}
z_M=2^rx_M,\quad \ov{x_M}+x_M^{\sigma_{t_0}}=S.
\end{equation}

Next we use a descent argument (as in \cite[page 365]{CLTZ15}) to prove the theorem when $r>0$.
We embed  $E(K(\sqrt{M}))/2^r E(K(\sqrt{M}))$ into $\RH^1(K(\sqrt{M}),E[2^r])$ by the Kummer map and consider the inflation-restriction exact sequence:
\[0\ra\RH^1(H_0/K(\sqrt{M}), E(H_0)[2^r]) \ra \RH^1(K(\sqrt{M}),E[2^r])\ra \RH^1(H_0,E[2^r]).\]
Since $z_M\in 2^rE(H_0)$, the image of $z_M$ in $\RH^1(H_0,E[2^r])$ under the Kummer map is zero, so  the image of $z_M$ in $\RH^1(K(\sqrt{M}),E[2^r])$ lies in $\RH^1(H_0/K, E(H_0)[2^r]) $. Because \[E(H_0)[2^\infty]=E(\BQ)[2],\] 
the image of $z_M$ in $\RH^1(K(\sqrt{M}),E[2^r])$  must be of order $2$, that is, $2z_M\in 2^rE(K(\sqrt{M}))$.

Next we prove
\begin{equation} \label{right-place}
2z_M\in 2^rE(\BQ(\sqrt{-pM}))^-.
\end{equation}
Let $\sigma_0$ be the generator of $\Gal(K(\sqrt{M})/K)$, and by definition (\ref{z_d}) we have
\begin{equation}\label{where}
\sigma_0(z_M)=-z_M.
\end{equation}
By \cite[Theorem 5.2]{Zhaint}, we already know that the $L(E^{(M)},1)\neq 0$ and hence its root number $\left(\frac{M}{-N}\right)=1$. Note $\chi_M(\sigma_{t_0})=\left(\frac{M}{N}\right)$.  Then
\[\chi_M(\sigma_{t_0})\sgn(M)=\left(\frac{M}{-N}\right)=1.\]
Here we divide into two cases:

\noindent {\em $\bullet$ $\chi_M(\sigma_{t_0})=\sgn(M)=+1.$} Then
\[\ov{2z_M}=-\chi_M(\sigma_{t_0})2z_M=-2z_M.\]
Since $M>0$, together with (\ref{where}) we conclude\[2z_M\in 2^rE(\BQ(\sqrt{-pM
}))^-.\]
Here we note $\sigma_0$ is a generator of $\Gal(\BQ(\sqrt{-pM})/\BQ)$.

\noindent {\em $\bullet$ $\chi_M(\sigma_{t_0})=\sgn(M)=-1.$} Then
\[\ov{2z_M}=-\chi_M(\sigma_{t_0})2z_M=2z_M.\]
Again since $M<0$, we have 
\[2z_M\in 2^rE(\BQ(\sqrt{-pM}))^-.\]
Consequently we conclude (\ref{right-place}) and hence
\begin{equation*} \label{}
z_M\in 2^{r-1}E(\BQ(\sqrt{-pM}))^-+E(\BQ(\sqrt{-pM}))_\tor.
\end{equation*}

Assume the contrary that 
\[z_M=2^ry+t,\quad y\in E(K(\sqrt{-pM}))^-,t\in E(K(\sqrt{-pM}))_\tor.\]
From (\ref{x_M}), we have $x_M=y+t'$ with $t'\in E(K(\sqrt{-pM}))_\tor$ and by Lemma \ref{tor} and the fact that $a\mapsto {\ov a}^{ \sigma_{t_0}}$ is also a generator of $\Gal(\BQ(\sqrt{-pM})/\BQ)$.
\[S=\ov{x_M}+x_M^{\sigma_{t_0}}=\ov{y}+y^{\sigma_{t_0}}+2t'=2t'\]
will have trivial $2$-primary component which is a contradiction.
\end{proof}

\subsection{Explicit Gross--Zagier formulae}
Let $\pi$ be the automorphic representation on $\GL_2(\BA)$ associated to $E$, let $\pi_{\chi_M}$ be the representation on $\GL_2(\BA)$ constructed from the quadratic character $\chi_M$ on $\BA_K^\times$ through Weil--Deligne representations. Since $(E,K)$ satisfies Heegner hypothesis, the Rankin--Selberg $L$-series $L(\pi\times \pi_{\chi_M},s)$ has sign $-1$ in its functional equation. From the Artin formalism, we have 
\begin{equation}\label{decomposition}
L(\pi\times \pi_{\chi_M},s)=L(E^{(M)},s)L(E^{(-pM)},s).
\end{equation}
Here we normalize the Rankin--Selberg $L$-series with the central point at $s=1$.

Recall $f:X_0(N)\ra E$ is the optimal modular parametrization of $E$ sending $[\infty]$ to the zero element of $E$, and $g$ is the newform associated to $E$. Denote the Peterson norm
\[(g,g)_{\Gamma_0(N)}:=\int\int_{\Gamma_0(N)\backslash \CH}|g(z)|^2dxdy,\quad z=x+iy,\]
where $\CH$ is the Poincar\'e upper half plane. Invoking the general explicit Gross--Zagier formula established in \cite{CST14}, the Heegner point $z_M\in E^{(-pM)}(\BQ)$ satisfies the following explicit height formula. \begin{thm}\label{GZ}
The Heegner point $z_M\in E^{(-pM)}(\BQ)$ satisfies 
\[L'(\pi\times \pi_{\chi_M},1)=\frac{16\pi^2(g,g)_{\Gamma_0(N)}}{\sqrt{p}|M|}\cdot \frac{\wh{h}_{\BQ}(z_M)}{\deg f},\]
where $\wh{h}_\BQ(\cdot )$ denotes the N\'eron--Tate height on $E$ over $\BQ$.
\end{thm}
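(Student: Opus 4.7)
The plan is to deduce this identity by applying the general explicit Gross--Zagier formula of Cai--Shu--Tian \cite{CST14} to the Rankin--Selberg pair $(\pi, \pi_{\chi_M})$, and then matching our concretely constructed twisted Heegner point $z_M$ with the abstract test-vector Heegner point appearing in that formula. The broad shape of CST14's result is an identity of the form
\[
L'(\pi \times \pi_\chi, 1) = \Bigl(\prod_v \alpha_v\Bigr) \cdot \frac{\wh h_\BQ(P_\chi)}{\deg f},
\]
where the $\alpha_v$ are explicit local factors (matrix coefficients/local periods) and $P_\chi$ is the $\chi$-isotypic projection of a toric integral of the modular parametrization against the CM point attached to the order of conductor $c$. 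Our job is then simply to read off each $\alpha_v$ in the present situation.

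First I would record the global inputs: the Heegner hypothesis for $(E,K)$ forces the sign of the functional equation of $L(\pi \times \pi_{\chi_M}, s)$ to be $-1$, and Artin formalism gives the factorization \eqref{decomposition}. Because the point $z_M$ lies in the $\chi_M$-eigenspace and, by Theorem \ref{divisibility}, essentially in $E^{(-pM)}(\BQ)^-$, its N\'eron--Tate height matches the height of the test-vector point $P_{\chi_M}$ in CST14 up to our explicit normalization of $z_M$ as a sum over $\Gal(H_M/K)$ against $\chi_M$. This identification is the only global step, and it requires tracing through the fact that our $f$ is the \emph{optimal} modular parametrization (so the test vector is the newform $g$ with its standard Fourier normalization, which yields the $\deg f$ in the denominator and the Petersson norm $(g,g)_{\Gamma_0(N)}$ upstairs).

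Next I would compute the local factors. At the archimedean place, CST14's formula contributes the standard archimedean period $8\pi^2 (g,g)_{\Gamma_0(N)}$ up to a factor of $2$ coming from the complex-conjugation convention, producing the numerator $16\pi^2(g,g)_{\Gamma_0(N)}$. At the ramified place of $K/\BQ$, namely $p$, the local toric period on $K_p = \BQ_p(\sqrt{-p})$ contributes $|d_{K/\BQ}|^{1/2} = \sqrt{p}$ in the denominator. At each prime $q \mid M$, the local character $\chi_{M,q}$ is the unramified quadratic character of $K_q$; since $q$ is admissible, $q$ is inert in $K$ when it is of first kind and split when of second kind, and the corresponding local matrix coefficient of the new vector against $\chi_{M,q}$ works out in both cases to $q^{-1}$, yielding $\prod_{q\mid M} q^{-1} = |M|^{-1}$. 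At all remaining finite places (those dividing $N$, away from $M$ and $p$), $\chi_{M,v}$ is unramified and trivial on units, and the local factor is $1$ because the Heegner hypothesis guarantees splitness at $\ell \mid N$ and the test vector is the new vector. Multiplying everything together gives precisely the claimed constant $\tfrac{16\pi^2 (g,g)_{\Gamma_0(N)}}{\sqrt p\, |M|}$.

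The main obstacle I anticipate is the normalization bookkeeping in two places: (i) reconciling the factor of $2$-powers implicit in passing from the toric integral in CST14 to the sum $\sum_{\sigma} \chi_M(\sigma) f(y_M)^\sigma$ that defines $z_M$ in \eqref{z_d}, and (ii) checking that at each $q \mid M$ the admissibility hypothesis really gives the \emph{same} local factor $q^{-1}$ irrespective of the split/inert behavior of $q$ in $K$, so that the product collapses cleanly to $|M|^{-1}$ with no stray sign or power of two. Once these bookkeeping steps are verified, the theorem follows immediately by substituting the local factors into CST14's formula.
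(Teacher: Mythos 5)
Your top-level strategy --- apply the explicit Gross--Zagier formula of Cai--Shu--Tian \cite{CST14} --- is exactly what the paper does; in fact the paper's entire proof is a single sentence: since the Heegner hypothesis holds, apply \cite[Theorem 1.1]{CST14}. The constant $16\pi^2(g,g)_{\Gamma_0(N)}/(\sqrt{p}\,|M|)$ falls out directly by plugging in the discriminant $-pM^2$ of the order $\CO_M$ (so $\sqrt{|D_K|}\cdot c = \sqrt{p}\,|M|$) and using $\wh h_K = 2\wh h_\BQ$; there is no need for a place-by-place computation.

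Where your proposal goes astray is in the attempted reconstruction of that constant as a product of local factors. First, the local character $\chi_{M,q}$ at a prime $q\mid M$ is \emph{ramified}, not unramified: $q$ ramifies in $K(\sqrt{M})/K$ precisely because $q\mid M$, and indeed $\chi_M$ has conductor $|M|$. The $|M|$ in the denominator is simply this conductor (equivalently $\sqrt{|D_c|/|D_K|}$); it is not a product of $q^{-1}$'s coming from local matrix coefficients of the new vector against an unramified character. Second, your description of the first/second kind dichotomy is not Definition \ref{prime-class}: ``first kind'' means ($q\equiv 1\bmod 4$ and inert in $K$) or ($q\equiv 3\bmod 4$ and split in $K$), so it cannot be summarized as ``inert in $K$.'' Third, the ``obstacle'' you anticipate in part (ii) --- that the local factor at $q\mid M$ might depend on the split/inert behaviour --- does not arise, because the $|M|$ factor has nothing to do with admissibility. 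These issues do not affect your final identity, since you are ultimately invoking \cite{CST14}, but the local bookkeeping as sketched would not survive being carried out; the honest version of your proof is to simply read off the CST14 theorem as the paper does, with the only genuine check being that your $z_M$ (the $\chi_M$-weighted Galois sum of $f(y_M)$) coincides with the test-vector Heegner point $P_{\chi_M}(f)$ of \cite{CST14} under the same normalization.
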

\begin{proof}
Since the pair $(E,K)$ satisfies the Heegner hypothesis, we just apply \cite[Theorem 1.1]{CST14}.
\end{proof}

Now we can give the proof of Theorem \ref{main1}.
\begin{thm}[Theorem \ref{main1}]\label{main1'}
Let $E$, $p$ and $M$ be as in Theorem \ref{main1}. Then we have 
$$
\ord_{s=1}L(E^{(M)}, s)=\rk\  E^{(M)}(\BQ)=0 \text{ \ and \ } 
\ord_{s=1}L(E^{(-pM)}, s)=\rk\  E^{(-pM)}(\BQ)=1.
$$
Moreover, the Shafarevich--Tate groups $\Sha(E^{(M)})$ and $\Sha(E^{(-pM)})$ are finite.
\end{thm}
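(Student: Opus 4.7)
The plan is to deduce the theorem from Theorem \ref{divisibility}, Theorem \ref{GZ}, the non-vanishing result \cite[Theorem 5.2]{Zhaint}, and the classical theorems of Gross--Zagier and Kolyvagin. The main work, namely the non-triviality of the Heegner point $z_M$, has already been done in Theorem \ref{divisibility}, so what remains is to assemble the standard consequences.

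I would begin by applying the Artin formalism decomposition \eqref{decomposition},
\[L(\pi\times \pi_{\chi_M},s)=L(E^{(M)},s)\,L(E^{(-pM)},s),\]
and observing that, since $(E,K)$ satisfies the Heegner hypothesis, the functional equation of $L(\pi\times \pi_{\chi_M},s)$ has sign $-1$, so the product vanishes to odd order at $s=1$. By Theorem \ref{divisibility}, the point $z_M$ is not contained in $2^r E(\BQ(\sqrt{-pM}))^-+E(\BQ(\sqrt{-pM}))_{\tor}$; in particular $z_M$ is non-torsion and $\wh{h}_\BQ(z_M)>0$. Feeding this into the explicit Gross--Zagier formula of Theorem \ref{GZ} yields $L'(\pi\times \pi_{\chi_M},1)\neq 0$, and combined with the odd-order vanishing we conclude $\ord_{s=1}L(\pi\times \pi_{\chi_M},s)=1$.

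Next I would invoke \cite[Theorem 5.2]{Zhaint} to obtain $L(E^{(M)},1)\neq 0$; this is the same non-vanishing already used inside the proof of Theorem \ref{divisibility}, and rests on the root number of $L(E^{(M)},s)$ being $+1$ together with an explicit lower bound on the $2$-adic valuation of the algebraic central value. Combined with the factorisation above, this forces $\ord_{s=1}L(E^{(-pM)},s)=1$, which establishes the two analytic rank assertions.

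To pass from the analytic rank equalities to the Mordell--Weil rank and $\Sha$ assertions, I would appeal to the theorems of Gross--Zagier \cite{GZ1986} and Kolyvagin \cite{Kolyvagin1990}. For $E^{(-pM)}$, the non-torsion Heegner point $z_M$ together with Kolyvagin's Euler system argument yields $\rk E^{(-pM)}(\BQ)=1$ and $|\Sha(E^{(-pM)})|<\infty$. For $E^{(M)}$, the non-vanishing $L(E^{(M)},1)\neq 0$ combined with the standard Gross--Zagier/Kolyvagin package gives $\rk E^{(M)}(\BQ)=0$ and $|\Sha(E^{(M)})|<\infty$. The only genuine obstacle in the whole argument is the non-triviality and exact $2$-divisibility of $z_M$, which was already secured in Theorem \ref{divisibility}; the rest of the proof is the formal packaging outlined above.
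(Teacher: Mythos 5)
Your argument is correct and follows exactly the same route as the paper: use Theorem~\ref{divisibility} for the non-triviality of $z_M$, the explicit Gross--Zagier formula (Theorem~\ref{GZ}) to get $\ord_{s=1}L(\pi\times\pi_{\chi_M},s)=1$, the non-vanishing of $L(E^{(M)},1)$ from \cite[Theorem 5.2]{Zhaint} together with the decomposition~\eqref{decomposition} to split the order between the two factors, and finally Gross--Zagier and Kolyvagin for the Mordell--Weil rank and finiteness of $\Sha$. The paper compresses all of this into two sentences, leaving the appeals to Theorem~\ref{GZ} and \cite[Theorem 5.2]{Zhaint} implicit; you have simply spelled out those steps, which is a faithful and complete unpacking of the intended argument.
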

\begin{proof}
By Theorem \ref{divisibility}, $z_M$ has infinite order and hence $L(\pi\times \pi_{\chi_M},s)$ has vanishing order $1$ at $s=1$.  From the decomposition in \eqref{decomposition} we must have
\[\ord_{s=1}L(E^{(M)},s)=0 \text{ \ and \ } \ord_{s=1}L(E^{(-pM)},s)=1.\]
Then Theorem \ref{main1} follows from the work of Gross--Zagier and Kolyvagin.
\end{proof}

Let $\Omega_{E^{(D)}}^+$ be the minimal real period of ${E^{(D)}}$. Let $\Omega^{(D)}$ denote the period of $E^{(D)}$, which is equal to $\Omega_{E^{(D)}}^+$ or $2\Omega_{E^{(D)}}^+$ according to that $E(\BR)$ is connected or not. 
\begin{prop}\label{period-relation}
We have the following relation between periods and Peterson norm of $g$:
\[\Omega^{(M)}\Omega^{(-pM)}=\frac{8\pi^2(g,g)_{\Gamma_0(N)}}{\deg f\sqrt{p}|M|}\]
\end{prop}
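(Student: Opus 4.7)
The plan is to view the identity as a consequence of comparing Theorem~\ref{GZ} with the factorization \eqref{decomposition}, in which the Néron–Tate height of the Heegner point $z_M$ must appear symmetrically on both sides and cancel out. The equality to be proved is equivalent, after multiplying through by $2\hat h_\BQ(z_M)$ and inserting the formula of Theorem~\ref{GZ}, to the period-theoretic identity
\[
L'(\pi\times\pi_{\chi_M},1) \;=\; 2\,\Omega^{(M)}\Omega^{(-pM)}\,\hat h_\BQ(z_M),
\]
so the strategy is to establish this last equation independently from Theorem~\ref{GZ}.

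First, I would differentiate the factorization \eqref{decomposition} at $s=1$. Since $L(E^{(-pM)},1)=0$ by Theorem~\ref{main1'}, while $L(E^{(M)},1)\neq 0$, only one term survives and one obtains
\[
L'(\pi\times\pi_{\chi_M},1) \;=\; L(E^{(M)},1)\cdot L'(E^{(-pM)},1).
\]
Thus it suffices to show that
\[
L(E^{(M)},1)\cdot L'(E^{(-pM)},1) \;=\; 2\,\Omega^{(M)}\Omega^{(-pM)}\,\hat h_\BQ(z_M).
\]

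Next, I would invoke the classical period-normalized Gross--Zagier formula for the imaginary quadratic extension $K'=\BQ(\sqrt{-pM})$ of $\BQ$. Over $K'$ the twists $E^{(M)}$ and $E^{(-pM)}$ become isomorphic, so $L(E^{(M)}/K',s)$ factors as $L(E^{(M)},s)L(E^{(-pM)},s)$, i.e.\ as the same Rankin--Selberg product. Interpreting $z_M\in E^{(-pM)}(\BQ)$ as (a quadratic-twist transport of) the $\chi_M$-component of the Heegner divisor on $E^{(M)}/K'$, the standard Gross--Zagier height identity, expressed in terms of the periods $\Omega^{(M)}$ and $\Omega^{(-pM)}$ rather than the Petersson norm, reads exactly as the displayed equation above. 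Comparing this with Theorem~\ref{GZ} and cancelling $\hat h_\BQ(z_M)\neq 0$ (non-vanishing by Theorem~\ref{divisibility}) yields the proposition.

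The main obstacle is the bookkeeping in the second step: one must translate between the automorphic/Petersson normalization used in Theorem~\ref{GZ} (from the Cai--Shu--Tian formula) and the classical period normalization of Gross--Zagier, keeping careful track of (i) the factor of $2$ coming from the relation $\hat h_{K'}=2\hat h_\BQ$ for points of $E^{(-pM)}(\BQ)\subset E^{(-pM)}(K')$, (ii) the isomorphism between $E^{(M)}$ and $E^{(-pM)}$ over $K'$ and its effect on differentials, and (iii) the compatibility between $\Omega^{(M)}$, $\Omega^{(-pM)}$ and the real/imaginary periods of $E^{(M)}/K'$. Once these normalizations are pinned down, the non-vanishing of $z_M$ permits the cancellation and the identity follows.
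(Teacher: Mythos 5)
Your proof is circular and does not match the paper's argument, which is a direct period computation.

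The ``classical period-normalized Gross--Zagier formula'' you want to invoke in your second step, namely
\[
L(E^{(M)},1)\cdot L'(E^{(-pM)},1) = 2\,\Omega^{(M)}\Omega^{(-pM)}\,\widehat{h}_\BQ(z_M),
\]
is not an independent input: it is exactly Corollary~\ref{GZ1}, which the paper itself derives \emph{from} Theorem~\ref{GZ}, the decomposition~\eqref{decomposition}, \emph{and Proposition~\ref{period-relation}}. There is no version of the Gross--Zagier theorem already normalized by $\Omega^{(M)}\Omega^{(-pM)}$ that you can quote from elsewhere; translating the Petersson-norm statement of Theorem~\ref{GZ} into that form is precisely the content of the proposition you are trying to prove. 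You flag this ``translation of normalizations'' as the main obstacle at the end, but then assume it is already known --- that is the gap, and it makes the argument circular. You also end up needing $\widehat{h}_\BQ(z_M)\neq 0$ (hence Theorem~\ref{divisibility}) just to cancel the height, which imports arithmetic depth into a statement that has none.

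The proposition is a purely archimedean identity between the Petersson norm of $g$, $\deg f$, and the real/imaginary periods of $E$ and its quadratic twists; it has nothing to do with $L$-values or Heegner points. The paper proves it directly: since $\Delta_E<0$ the period lattice is $L=\BZ\Omega+\BZ(\Omega/2+\Omega^-/2)$, the modular parametrization gives $8\pi^2(g,g)_{\Gamma_0(N)}/\deg f = 2\int_{\BC/L}dx\,dy$, the lattice area is computed in terms of $\Omega$ and $\Omega^-$, and the resulting expression is matched with $\Omega^{(M)}\Omega^{(-pM)}$ using the formula of Pal for periods of quadratic twists. You should establish the proposition by this kind of direct comparison of volume forms, independently of any $L$-function input.
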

\begin{proof}
Since the discriminant $\Delta_E<0$, $E(\BR)$ has only one connected component. The period lattice $L=\BZ\Omega+\BZ(\Omega/2+\Omega^-/2)$. We have the following period relation:
\[\frac{8\pi^2(g,g)_{\Gamma_0(N)}}{\deg f\sqrt{p}|M|}=\frac{2}{\sqrt{p}|M|}\int_{\BC/L}dxdy=\frac{\Omega\Omega^-}{\sqrt{-p}|M|}=\Omega^{(M)}\Omega^{(-pM)},\]
where the last equality follows from \cite{Pal12}.
\end{proof}

Combining Theorem \ref{GZ}, Proposition \ref{period-relation} and the decomposition $(\ref{decomposition})$, we have
\begin{coro}\label{GZ1}
The Heegner point $z_M\in E^{(-pM)}(\BQ)$ satisfies 
\[\frac{L(E^{(M)},1)L'(E^{(-pM)},1)}{\Omega^{(M)}\Omega^{(-pM)}}=2\wh{h}_{\BQ}(z_M).\]
\end{coro}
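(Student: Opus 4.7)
The plan is to observe that the corollary is a direct algebraic consequence of chaining together the three preceding results, with the only subtlety being the derivative of the Rankin--Selberg factorization at $s=1$.

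First, I would unpack the decomposition \eqref{decomposition}. By Theorem \ref{main1'}, the local factor $L(E^{(M)},s)$ is nonzero at $s=1$ while $L(E^{(-pM)},s)$ has a simple zero there, so when I differentiate the product $L(\pi\times\pi_{\chi_M},s)=L(E^{(M)},s)L(E^{(-pM)},s)$ at $s=1$, the term $L'(E^{(M)},1)L(E^{(-pM)},1)$ drops out and I am left with the clean identity
\[
L'(\pi\times\pi_{\chi_M},1)=L(E^{(M)},1)\,L'(E^{(-pM)},1).
\]

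Next, I would substitute this into the explicit height formula of Theorem \ref{GZ}, obtaining
\[
L(E^{(M)},1)\,L'(E^{(-pM)},1)=\frac{16\pi^2(g,g)_{\Gamma_0(N)}}{\sqrt{p}\,|M|}\cdot\frac{\wh{h}_{\BQ}(z_M)}{\deg f}.
\]
Dividing both sides by $\Omega^{(M)}\Omega^{(-pM)}$ and using Proposition \ref{period-relation} to replace the period product by $\dfrac{8\pi^2(g,g)_{\Gamma_0(N)}}{(\deg f)\sqrt{p}\,|M|}$, the factor of $(g,g)_{\Gamma_0(N)}$, the factor of $\pi^2$, the factor of $\sqrt{p}|M|$, and the factor of $\deg f$ all cancel cleanly, leaving the ratio $16/8=2$ in front of $\wh{h}_{\BQ}(z_M)$. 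This produces exactly the claimed identity.

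Since every ingredient is already in hand, there is no real obstacle; the step that requires a moment of care is verifying that $L(E^{(M)},1)\neq 0$ and $\ord_{s=1}L(E^{(-pM)},s)=1$ so that Leibniz on the product simplifies to a single term. Both of these are guaranteed by Theorem \ref{main1'}, so the argument is essentially a bookkeeping exercise.
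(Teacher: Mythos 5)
Your proposal is correct and follows precisely the route the paper takes: the paper states the corollary by "combining Theorem \ref{GZ}, Proposition \ref{period-relation} and the decomposition (\ref{decomposition})," and your argument simply makes explicit the one implicit step, namely that $L'(\pi\times\pi_{\chi_M},1)=L(E^{(M)},1)\,L'(E^{(-pM)},1)$ because $L(E^{(-pM)},1)=0$ by Theorem \ref{main1'}. The arithmetic with the $16/8=2$ cancellation is also verified correctly.
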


\bigskip

\section{The $2$-part of the Birch and Swinnerton-Dyer conjecture for the quadratic twists}
Throughout this section, we assume $E$ has odd Manin constant. We shall compare all the arithmetic invariants of $E$ with those of $E^{(M)}$ and $E^{(-pM)}$.

\subsection{Tamagawa numbers}
\begin{prop}\label{Tamagawa}
Let $q$ be any prime dividing $D$ with $(D,2N)=1$. We have 
$$
c_q(E^{(D)}) = 
\left\{
\begin{array}{ll}
2 & \hbox{if $q$ is inert in $\BQ(\sqrt{\Delta_E})$;} \\
4 & \hbox{if $q$ splits in $\BQ(\sqrt{\Delta_E})$.}
\end{array}
\right.
$$
\end{prop}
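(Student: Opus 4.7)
The plan is to compute $c_q(E^{(D)})$ by running Tate's algorithm on an explicit minimal Weierstrass model. Starting from the form $y^2 = x^3 + ax^2 + bx$ of $E$ used in the proof of Proposition \ref{Delta}, the twist admits the model
\[
E^{(D)}: \ y^2 = x^3 + aDx^2 + bD^2x.
\]
Since $D$ is squarefree, $q \mid D$, and $E$ has good reduction at $q$ (so $q \nmid 2\Delta_E$), a direct calculation gives $v_q(\Delta(E^{(D)})) = 6$, and the $c_4$-invariant has low enough $q$-valuation to guarantee that this model is already $q$-minimal. Since $E^{(D)}$ acquires good reduction over the ramified quadratic extension $\BQ_q(\sqrt{D})/\BQ_q$, the Kodaira type at $q$ is the unique potentially good type with $v_q(\Delta) = 6$, namely $I_0^*$.

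For Kodaira type $I_0^*$ the geometric component group $\Phi$ is $(\BZ/2\BZ)^2$, and Tate's algorithm identifies its three non-trivial elements Galois-equivariantly with the three non-trivial $2$-torsion points of the generic fibre --- equivalently, with the three roots of $x^3 + aDx^2 + bD^2x$. Moreover, quadratic twisting does not alter the Galois action on $2$-torsion, since the twisting character takes values $\pm 1$ and $-P = P$ for any point $P$ of order $2$; hence $E^{(D)}[2] \cong E[2]$ as $G_{\BQ_q}$-modules. Consequently
\[
c_q(E^{(D)}) \;=\; |\Phi(\BF_q)| \;=\; 1 + \#\{\text{non-trivial } 2\text{-torsion points of } E \text{ rational over } \BQ_q\}.
\]

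To conclude, Condition $(\mathrm{Tor})$ implies $\BQ(E[2]) = \BQ(\sqrt{\Delta_E})$, and since $q \nmid \Delta_E$ is odd, $q$ is unramified in this quadratic extension. Thus either $q$ splits --- all three non-trivial $2$-torsion points become $\BQ_q$-rational, giving $c_q(E^{(D)}) = 4$ --- or $q$ is inert --- only the rational $2$-torsion point $(0,0)$ survives, giving $c_q(E^{(D)}) = 2$. This matches the stated dichotomy. The principal technical input is the identification, for Kodaira type $I_0^*$, of the geometric component group with $E[2]$ as a Galois module; once this is granted, everything else follows from the $(\mathrm{Tor})$ hypothesis together with the fact that quadratic twisting is invisible on $E[2]$.
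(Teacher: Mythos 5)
Your proof is correct, but it takes a genuinely different route from the paper's. The paper argues indirectly: it cites two lemmas of Coates (\cite{Coates13}, Lemmas 36 and 37) to obtain the identity
$\ord_2\bigl(c_q(E^{(D)})\bigr)=\ord_2\bigl(|E(\BF_q)[2]|\bigr)$,
and then simply observes that additive reduction forces $c_q(E^{(D)})\leq 4$, so that knowing the $2$-adic valuation pins down the actual value. Your argument instead runs Tate's algorithm on the explicit twisted model $y^2=x^3+aDx^2+bD^2x$, computes $v_q(\Delta)=6$ and $v_q(c_4)=2$, deduces minimality and Kodaira type $I_0^*$, and then reads off $c_q=|\Phi(\BF_q)|$ from the component group. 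This is more self-contained and more constructive --- in particular it produces the exact fibre type, not just the Tamagawa number --- at the cost of one assertion you should make precise: your "Galois-equivariant identification" of $\Phi$ with $E^{(D)}[2]$ is best replaced by the concrete statement of Tate's algorithm for $I_0^*$ (Silverman, \emph{ATAEC}, IV.9, Step 6), namely $c_q=1+\#\{\text{roots of }P(T)\text{ in }\BF_q\}$, where here $P(T)\equiv T(T^2+a\tfrac{D}{q}T+b(\tfrac{D}{q})^2)\bmod q$ has the same splitting behaviour as $x(x^2+ax+b)$ because $D/q$ is a $q$-adic unit. Since $(0,0)\in E[2](\BQ)$, the factor $T$ always contributes one rational root, giving $c_q\in\{2,4\}$ according to whether the quadratic factor --- whose discriminant has the square class of $\Delta_E$ --- stays irreducible or splits over $\BF_q$. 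That matches the stated dichotomy. Both proofs ultimately trace the Tamagawa number to $2$-torsion; yours exposes the mechanism, while the paper's outsources it to a cited lemma and compensates with the additive-reduction bound.
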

\begin{proof}
Since $(q, 2N)=1$, reduction modulo $q$ on $E$ gives an isomorphism 
$$
E(\BQ_q)[2] \cong E(\BF_q)[2].
$$ 
It follows from \cite[Lemma 36 and Lemma 37]{Coates13} that 
$$
\ord_2(c_q(E^{(D)})) = \ord_2(|E^{(D)}(\BQ_q)[2]|) = \ord_2(|E(\BQ_q)[2]|) = \ord_2(|E(\BF_q)[2]|),
$$
which is equal to $1$ if $q$ is inert in $\BQ(E[2])$, and equal to $2$ if $q$ splits in $\BQ(E[2])$. Note that the curve $E^{(D)}$ has additive reduction at $q$, so we have $c_q(E^{(D)}) \leq 4$. The assertion of the lemma then follows.
\end{proof}

\begin{prop}\label{Tamagawa2}
Let $D\equiv 1\mod 4$ be a square-free integer with $(D, N)=1$. Then
$$
\ord_2\left(\prod_{\ell \mid N} c_\ell (E^{(D)})\right)=1.
$$
\end{prop}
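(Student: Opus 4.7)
Since $D\equiv 1 \mod 4$ is square-free with $(D,N)=1$, the quadratic character $\chi_D$ is unramified at every prime $\ell\mid N$. Hence at each such $\ell$, the twist $E^{(D)}/\BQ_\ell$ is either isomorphic to $E$ (when $D\in(\BQ_\ell^\times)^2$) or is the unique unramified quadratic twist of $E$. In particular $E^{(D)}$ has the same Kodaira type as $E$ at every $\ell\mid N$, except possibly for a split/non-split switch in the multiplicative case. My plan is to reduce the proposition to the case $D=1$ by a local invariance of the $2$-adic valuation of $c_\ell$ under unramified quadratic twist, and then to compute $\ord_2\bigl(\prod_{\ell\mid N}c_\ell(E)\bigr)$ by a global argument.

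For the local step, I would treat the Kodaira types in turn. For additive types $II, II^*, III, III^*$ the Tamagawa number is already invariant (equal to $1,1,2,2$ respectively). For multiplicative $I_n$, passing from split to non-split sends $c_\ell=n$ to $\gcd(2,n)$, so the $2$-adic valuations agree iff $v_2(n)\le 1$; I would use condition $(\mathrm{Tor})$ together with the explicit description of the $2$-isogeny $\phi:E\to E'$ on a Tate curve (which sends a split $I_n$ Tate parametrization $\BG_m/q^\BZ$ for $E$ to a split $I_{2n}$ for $E'$, via quotienting by $\pm 1$) to constrain $n$. For the remaining additive types $IV,IV^*,I_0^*,I_n^*$, a parallel analysis of the N\'eron component group and its Galois action, combined with the rigid constraint that both $E[2](\BQ)$ and $E'[2](\BQ)$ equal $\BZ/2\BZ$, should pin down the 2-adic part of $c_\ell$ to be invariant under the unramified twist.

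Granted this local invariance, it remains to show $\ord_2\bigl(\prod_{\ell\mid N}c_\ell(E)\bigr)=1$. Under the standing assumption of this section (that the $2$-part of the Birch and Swinnerton-Dyer conjecture holds for $E$, which is the hypothesis of Theorem \ref{main2}), Proposition \ref{Selmer1} gives $\Sha(E)[2]=0$, so $|\Sha(E)|$ is odd. Combined with $\ord_2(|E(\BQ)_\tor|)=1$ from Proposition \ref{tor}, the equivalence in Corollary \ref{24E} yields
\[
\ord_2\!\Bigl(\prod_\ell c_\ell(E)\Bigr) = -1 + 2\,\ord_2(|E(\BQ)_\tor|) - \ord_2(|\Sha(E)|) = 1.
\]
Since $c_\ell(E)=1$ for $\ell\nmid N$, this is exactly the desired equality in the case $D=1$.

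The main obstacle will be the case-by-case local analysis in the second paragraph, particularly for the additive types $I_n^*$ (where the component group is $\BZ/4\BZ$ or $(\BZ/2\BZ)^2$ depending on the parity of $n$, with Galois actions that vary) and at $\ell\in\{2,3\}$ where Tate's algorithm is more subtle. I would work directly with the Weierstrass model $y^2=x^3+ax^2+bx$ of $E$ constructed in the proof of Proposition \ref{Delta} (with the rational $2$-torsion at $(0,0)$) and its twist $y^2=x^3+aDx^2+bD^2x$ for $E^{(D)}$, translating condition $(\mathrm{Tor})$ into the algebraic requirement that both $b$ and $a^2-4b$ are non-squares in $\BQ$, which should supply enough rigidity to complete the enumeration.
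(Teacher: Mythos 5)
Your overall plan — reduce to $D=1$ by showing $\ord_2(c_\ell(E^{(D)}))=\ord_2(c_\ell(E))$ for every $\ell\mid N$, and then compute $\ord_2\bigl(\prod_\ell c_\ell(E)\bigr)$ from the $2$-part of BSD for $E$ — is exactly the paper's, and your global computation for $D=1$ is correct, including the (accurate) observation that the proposition silently uses the BSD(2) hypothesis of Theorem \ref{main2} through Corollary \ref{24E}. The local comparison, however, is only a sketch, and the sketch has a genuine gap in the multiplicative case.

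At a split $I_n$ prime $\ell$ you need $v_2(n)\le 1$, and you propose to extract this locally from condition $(\mathrm{Tor})$ via the Tate parametrization of the $2$-isogeny. That cannot work: $(\mathrm{Tor})$ constrains $E[2](\BQ)$, not $E[2](\BQ_\ell)$. A split Tate curve $\BG_m/q^\BZ$ with $v_\ell(q)=n$ divisible by $4$ and $q$ a square in $\BQ_\ell^\times$ has full local $2$-torsion while $E[2](\BQ)$ can still be $\BZ/2\BZ$, and the quotient by either rational $2$-torsion point yields an $E'$ perfectly compatible with $(\mathrm{Tor})$; nothing local forbids large $v_2(n)$. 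The bound $v_2(c_\ell(E))\le 1$ in fact comes only from the global identity $\ord_2\bigl(\prod_\ell c_\ell(E)\bigr)=1$ that your first paragraph establishes, and that is precisely how the paper uses it: once $v_2(c_\ell(E))\le 1$ for every $\ell$, the split$\leftrightarrow$non-split switch $n\leftrightarrow\gcd(2,n)$ preserves the $2$-adic valuation, with the one delicate subcase (non-split with $c_\ell(E)=2$, twisting to split with $c_\ell(E^{(D)})=n$) handled by the Dokchitser--Dokchitser isogeny-invariance theorem rather than by anything local to $\ell$. For the additive types, the paper bypasses your projected component-group enumeration entirely by one lemma: for additive reduction, $\ord_2(c_\ell)=\ord_2(|E(\BQ_\ell)[2]|)$ (Coates, Lemmas 36--37), and since the Galois modules $E[2]$ and $E^{(D)}[2]$ are canonically identified (quadratic twist is trivial on $2$-torsion), the equality $\ord_2(c_\ell(E^{(D)}))=\ord_2(c_\ell(E))$ is immediate with no case analysis and no special pleading at $\ell\in\{2,3\}$. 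So: reorder your argument so the global step comes first and feeds the multiplicative cases, and replace the Kodaira/component-group enumeration with the Coates lemma.
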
\label{Tamagawa3}
\begin{proof}
By Proposition \ref{24E}, we have 
\[\ord_2\left(\frac{|\Sha(E)|}{|E(\BQ)_\tor|^2}\prod_\ell c_\ell(E)\right)=-1,\]
Since $\ord_2(|E(\BQ)_\tor|^2)=2$ and $|\Sha(E)|$ must be a square, it follows that $\Sha(E)[2]=0$ and 
\begin{equation}\label{tama-E}
\ord_2\left(\prod_{\ell \mid N} c_\ell (E)\right)=1.
\end{equation}
We now prove the assertion in several cases according to the reduction types of $E$. 

(\romannumeral1) If $E$, and hence $E^{(M)}$, have additive reduction at $\ell$, from the usual table of special fibers the $2$-primary part of the connected component group is killed by $2$. Again, by \cite[Lemma 36 and Lemma 37]{Coates13}, we have 
\begin{equation}\label{add}
\ord_2(c_\ell(E))=\ord_2(|E(\BQ_\ell)[2]|)=\ord_2(c_\ell(E^{(M)})).
\end{equation}

(\romannumeral2) If $E$ has multiplicative reduction at $\ell$, and $\ell \mid N$ splits in $\BQ(\sqrt{D})$. Then $E^{(D)}$ is isomorphic to $E$ over $\BQ_\ell$, we immediately have $c_\ell(E^{(D)})=c_\ell(E)$. 

(\romannumeral3) If $E$ has split multiplicative reduction at $\ell$, and $\ell\mid N$ is inert in $\BQ(\sqrt{D})$.
Then $E^{(M)}$ has nonsplit multiplicative reduction at $\ell$.  In this case $E^{(M)}_{\BQ_{\ell^2}}$ and $E_{\BQ_{\ell^2}}$ are isomorphic and have the same Tamagawa number equal to $c_\ell(E)$. In view of \cite[page 366]{Silvermanbook2}, if $c_\ell(E)$ is odd (respectively, even), then $c_\ell(E^{(M)})=1$ (respectively, $2$), and hence
\[\ord_2(c_\ell(E))=\ord_2(c_\ell(E^{(M)})).\]

(\romannumeral4) If $E$ has nonsplit multiplicative reduction at $\ell$, and $\ell\mid N$ is inert in $\BQ(\sqrt{D})$. Then $E^{(M)}$ has split multiplicative reduction at $\ell$. If $c_\ell(E)=1$, then both $c_\ell(E)$ and $c_\ell(E^{(M)})$ are odd, i.e., 
\[\ord_2(c_\ell(E))=\ord_2(c_\ell(E^{(M)}))=0.\]
If $c_\ell(E)=2$, then $E$ is semi-stable. Indeed from (\ref{add}), the Tamagawa number is even for places of additive reduction. But there is only one bad place of $E$ with even Tamagawa number. By (\ref{tama-E'}) and (\ref{tama-E}), the $2$-adic valuation of the product of all Tamagawa numbers of $E$ (respectively, $E'$)  is $1$ (respectively, $\leq 1$). Since $E$ is semi-stable of root number $+1$ and $c_\ell(E)=2$, we conclude from \cite[Theorem 6.1]{DD14} that $c_\ell(E')=1$. Again, we have 
$$
\ord_2(c_\ell(E'))=\ord_2(c_\ell({E'}^{(M)}))=0.
$$ 
Now $\phi^{(M)}:E^{(M)}\ra {E'}^{(M)}$ is an isogeny of degree $2$ of elliptic curves of split multiplication at $\ell$. Since $c_\ell({E'}^{(M)})$ is odd, by \cite[Theorem 6.1]{DD14}, $\ord_2(c_\ell(E^{(M)}))=1$.
Therefore, for any $\ell\mid N$, we always have
\[\ord_2(c_\ell(E))=\ord_2(c_\ell(E^{(M)})).\]
This completes the proof of this proposition.
\end{proof}

\subsection{Selmer groups} 
Let $E$, $p$ and $M$ be as in Theorem \ref{main1}. Recall that $E^{(M)}$ is the quadratic twist of $E$ by $\BQ(\sqrt{M})$. Let $\phi^{(M)}:E^{(M)}\ra E'^{(M)}$ be an isogeny of degree $2$ with kernel $E^{(M)}[2](\BQ)$ and ${\phi'}^{(M)}$ its dual isogeny. There is a natural identification of Galois modules $E[\phi]=E^{(M)}[\phi^{(M)}]$. This allows us to view both $\Sel_\phi(E)$ and $ \Sel_{\phi^{(M)}}(E^{(M)})$ as  subgroups of $\RH^1(\BQ, E[\phi])$ and enable us to compare the Selmer groups $\Sel_\phi(E)$, $\Sel_{\phi^{(M)}}(E^{(M)})$ by local Kummer conditions in $\RH^1(\BQ_\ell,E[\phi])$ for various places $\ell$ of $\BQ$.

\begin{lem}\label{Local}
If all the prime factors of $2N$ split in $\BQ(\sqrt{M})$, then 
$$
\kappa_{\ell,\phi}(E)=\kappa_{\ell,\phi^{(M)}}(E^{(M)}) \ \ \text{and} \ \ \kappa_{\ell,\phi'}(E')=\kappa_{\ell,{\phi'}^{(M)}}({E'}^{(M)})
$$ 
for all places $\ell$. 
\end{lem}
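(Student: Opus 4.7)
The plan is to verify the equality at one place $v$ of $\BQ$ at a time. The module $E[\phi]$ is canonically $\BZ/2\BZ$ with trivial $G_\BQ$-action (generated by the rational 2-torsion point), and is naturally identified with $E^{(M)}[\phi^{(M)}]$; Kummer theory then identifies $\RH^1(\BQ_v, E[\phi])$ with $\BQ_v^\times/\BQ_v^{\times 2}$. Using the Weierstrass models $E: Y^2=X(X^2+aX+b)$ and $E^{(M)}: Y^2=X(X^2+MaX+M^2 b)$ from Proposition \ref{Delta}, the Kummer map $\delta_\phi$ sends the 2-torsion point $(0,0) \in E'(\BQ_v)$ to the class of $a^2 - 4b$, and $\delta_{\phi^{(M)}}$ sends $(0,0) \in E'^{(M)}(\BQ_v)$ to $M^2(a^2-4b) \equiv a^2 - 4b$ modulo squares. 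Thus the images of the 2-torsion already coincide; what remains is to match the full images.

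At the easy places, the argument is quick. If $\ell \mid 2N$, the hypothesis gives $\ell$ split in $\BQ(\sqrt M)$, so $M \in \BQ_\ell^{\times 2}$ and $E \cong E^{(M)}$ over $\BQ_\ell$, making the two Kummer images coincide tautologically. If $\ell$ is odd with $\ell \nmid 2NM$, then $E$ and $E^{(M)}$ both have good reduction at $\ell$ (since $\BQ(\sqrt M)/\BQ$ is unramified there), and because $2$ is coprime to $\ell$, the standard lifting argument gives $\kappa_{\ell,\phi}(E) = \RH^1_{\mathrm{ur}}(\BQ_\ell, \BZ/2\BZ) = \kappa_{\ell,\phi^{(M)}}(E^{(M)})$. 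At the archimedean place, $\Delta_{E^{(M)}} = M^6 \Delta_E$ and $\Delta_{E'^{(M)}} = M^6 \Delta_{E'}$ have the same signs as $\Delta_E$ and $\Delta_{E'}$, so by Proposition \ref{Delta} both $E(\BR)$ and $E^{(M)}(\BR)$ are connected while $E'(\BR)$ and $E'^{(M)}(\BR)$ each have two components; since $|\RH^1(\BR, \BZ/2\BZ)| = 2$ and both local images are non-trivial (they contain the class of the 2-torsion image), the two images each equal the full group $\{\pm 1\}$ and hence agree.

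The core case, and the main obstacle, is $\ell \mid M$. Here $\ell$ is odd and coprime to $2N$, so $E$ retains good reduction while $E^{(M)}$ acquires ramified additive reduction, and a direct comparison of reduction types is not available. My plan is to use local Tate duality to obtain $|\kappa_{\ell,\phi^{(M)}}(E^{(M)})| \cdot |\kappa_{\ell,{\phi^{(M)}}'}(E'^{(M)})| = |\RH^1(\BQ_\ell, \BZ/2\BZ)| = 4$. The admissibility of $\ell$ (Definition \ref{def1}) means $\ell$ is inert in both $\BQ(E[2]) = \BQ(\sqrt{a^2-4b})$ and $\BQ(E'[2]) = \BQ(\sqrt b)$, so $a^2 - 4b$ and $b$ are both non-square units in $\BQ_\ell^\times$. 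The classes $\delta_{\phi^{(M)}}((0,0)) \equiv a^2 - 4b$ and $\delta_{{\phi^{(M)}}'}((0,0)) \equiv b$ are therefore non-trivial in their respective Kummer images, so $|\kappa_{\ell,\phi^{(M)}}(E^{(M)})| \geq 2$ and $|\kappa_{\ell,{\phi^{(M)}}'}(E'^{(M)})| \geq 2$; the duality relation then forces both to equal exactly $2$. Hence $\kappa_{\ell,\phi^{(M)}}(E^{(M)}) = \{1,\,a^2-4b\} = \RH^1_{\mathrm{ur}}(\BQ_\ell,\BZ/2\BZ) = \kappa_{\ell,\phi}(E)$, and symmetrically $\kappa_{\ell,{\phi^{(M)}}'}(E'^{(M)}) = \{1,b\} = \kappa_{\ell,\phi'}(E')$.

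The essential new point in this lemma, compared with the more familiar case of Selmer groups with respect to multiplication by $2$, is that at the ramified primes $\ell\mid M$ the local Kummer image on the twisted side is rescued by admissibility: the rational 2-torsion point supplies a non-square unit under $\delta_{\phi^{(M)}}$, so that the image lies in (and, by the duality order count, fills out) the unramified subgroup, precisely matching the good-reduction side.
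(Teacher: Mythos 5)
Your overall structure matches the paper's: compare local Kummer conditions place by place, with $\ell\mid 2N$ handled by the local isomorphism coming from $M\in\BQ_\ell^{\times 2}$, and the odd primes away from $N$ split into unramified $(\ell\nmid 2NM)$ and ramified $(\ell\mid M)$ cases. For the key ramified case $\ell\mid M$ you replace the paper's citation of \cite[Lemma 6.8]{Klagsbrun17} with a self-contained argument using local Tate duality plus the fact that admissibility forces $a^2-4b$ and $b$ to be non-square units at $\ell$, so that the two local images are exactly the unramified lines. This is a genuine and clean way to avoid the external reference, and the core input (that $\ell$ is inert in both $\BQ(E[2])$ and $\BQ(E'[2])$, so $E(\BQ_\ell)[2]\cong E'(\BQ_\ell)[2]\cong\BZ/2\BZ$) is the same condition the paper records before invoking Klagsbrun.

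However, your treatment of the archimedean place is incorrect. You claim that at $\infty$ all four local Kummer images are non-trivial because ``they contain the class of the 2-torsion image,'' and conclude each is the full group $\{\pm 1\}$. This is false for the $\phi'$-side: the class $\delta_{\phi'}((0,0))\equiv b$, and by Proposition \ref{Delta} we have $b>0$, so $b$ is a square in $\BR$ and the 2-torsion image under $\delta_{\phi'}$ is \emph{trivial}. Your own duality argument (used for $\ell\mid M$) in fact forces $|\kappa_{\infty,\phi}(E)|\cdot|\kappa_{\infty,\phi'}(E')|=|\RH^1(\BR,\BZ/2\BZ)|=2$, so once $\kappa_{\infty,\phi}(E)=\{\pm1\}$ one must have $\kappa_{\infty,\phi'}(E')=0$, directly contradicting what you wrote. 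The paper computes this correctly and more robustly by connectivity: $E(\BR)$ and $E^{(M)}(\BR)$ are connected while $E'(\BR)$ and $E'^{(M)}(\BR)$ each have two components, giving $\kappa_{\infty,\phi}(E)=\kappa_{\infty,\phi^{(M)}}(E^{(M)})=\BZ/2\BZ$ and $\kappa_{\infty,\phi'}(E')=\kappa_{\infty,\phi'^{(M)}}(E'^{(M)})=0$. The equality you need at $\infty$ does hold, but your reasoning does not establish the $\phi'$-half and would yield the wrong value for those images; you should replace it with the connectivity computation (or simply with the observation that $b>0$ forces the $\phi'$-side to be trivial, together with the duality count).
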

\begin{proof}
First consider the case $\ell=\infty$. We have the following Weierstrass equations
$$E^{(M)}: y^{2}=x^3+aMx^2+bM^2x,\quad 
E'^{(M)}: y^{2}=x^3-2aMx^{2}+(a^2-4b)M^2x.
$$
By Proposition \ref{dis}, $\Delta_{E^{(M)}}=M^6\Delta_E<0$ and $\Delta_{{E'}^{M}}=M^6\Delta_{E'}>0$.
Hence both $E(\BR)$ and $E^{(M)}(\BR)$ are connected while both $E'(\BR)$ and ${E'}^{(M)}(\BR)$ have two connected components. Then
\[\kappa_{\infty,\phi}(E)=\kappa_{\infty,\phi^{(M)}}(E^{(M)})=\BZ/2\BZ\] 
and \[\kappa_{\infty,\phi'}(E')=\kappa_{\infty,{\phi'}^{(M)}}({E'}^{(M)})=0.\]

Suppose $\ell\mid 2N$, then $E$ and $E^{(M)}$ are isomorphic over $\BQ_\ell$. In particular, 
$$
\kappa_{\ell,\phi}(E)=\kappa_{\ell,\phi^{(M)}}(E^{(M)}) \ \ \text{and} \ \ \kappa_{\ell,\phi'}(E')=\kappa_{\ell,{\phi'}^{(M)}}({E'}^{(M)})
$$ 
for $\ell\mid N$.  We now compare the local conditions at each place $\ell\nmid 2N\infty$ and divide it into two cases.

(\romannumeral1) For $\ell \nmid 2NM\infty$, we have both $E$ and $E'$ have good reduction at $\ell$ and $\ell$ is unramified in $\BQ(\sqrt{M})/\BQ$. 

(\romannumeral2) For $\ell|M$, we have both $E$ and $E'$ have good reduction at $\ell$ and $\ell$ is ramified in $\BQ(\sqrt{M})/\BQ$, whence we have $E(\BQ_\ell)[2] \cong E'(\BQ_\ell)[2] \cong \BZ/2\BZ$.

Applying \cite[Lemma 6.8]{Klagsbrun17},  we have 
$$
\kappa_{\ell,\phi}(E)=\kappa_{\ell,\phi^{(M)}}(E^{(M)}) \ \ \text{and} \ \ \kappa_{\ell,\phi'}(E')=\kappa_{\ell,{\phi'}^{(M)}}({E'}^{(M)})
$$
for $\ell \nmid 2N\infty$. Then the lemma follows.
\end{proof}

\begin{lem}\label{Selmer2}
If all the prime factors of $2N$ split in $\BQ(\sqrt{M})$, then 
$$
\Sel_\phi(E^{(M)}) \cong \Sel_{\phi'}(E'^{(M)}) \cong \BZ/2\BZ.
$$
\end{lem}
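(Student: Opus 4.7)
The plan is to reduce the statement directly to Proposition \ref{Selmer1} by invoking the place-by-place equality of local Kummer conditions provided by Lemma \ref{Local}, using the canonical identification of the relevant kernels as Galois modules.

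First, I would observe that $E[\phi]$ and $E^{(M)}[\phi^{(M)}]$ are canonically identified as $G_\BQ$-modules: both are the order-$2$ subgroups generated by the rational $2$-torsion point (by definition of $\phi$ and $\phi^{(M)}$), so the absolute Galois group acts trivially on each, and both are canonically the trivial Galois module $\BZ/2\BZ$. The same reasoning applies to $E'[\phi']$ and ${E'}^{(M)}[{\phi'}^{(M)}]$.

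Under this identification, the ambient global cohomology group $\RH^1(\BQ, E[\phi])$ and the local groups $\RH^1(\BQ_v, E[\phi])$ at every place $v$ are literally the same for $E$ and for $E^{(M)}$. Lemma \ref{Local} asserts that, under the hypothesis that every prime of $2N$ splits in $\BQ(\sqrt{M})$, the local Kummer images satisfy
$$
\kappa_{v,\phi}(E) \;=\; \kappa_{v,\phi^{(M)}}(E^{(M)}), \qquad \kappa_{v,\phi'}(E') \;=\; \kappa_{v,{\phi'}^{(M)}}({E'}^{(M)})
$$
for every place $v$. Since the two Selmer groups are cut out in the same global cohomology group by the same local conditions at each place, they coincide as subgroups:
$$
\Sel_{\phi^{(M)}}(E^{(M)}) \;=\; \Sel_\phi(E), \qquad \Sel_{{\phi'}^{(M)}}({E'}^{(M)}) \;=\; \Sel_{\phi'}(E').
$$

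Finally, I would invoke Proposition \ref{Selmer1}, which uses the standing hypothesis of Theorem \ref{main2} that the $2$-part of the Birch and Swinnerton-Dyer conjecture holds for $E$, to conclude that each of these Selmer groups is isomorphic to $\BZ/2\BZ$. There is no genuine obstacle at this stage: the substantive input is the comparison of local conditions accomplished in Lemma \ref{Local}, and the present lemma is essentially a transport-of-structure argument that carries the known $\phi$-Selmer group of $E$ across the quadratic twist via the trivial Galois-module identification.
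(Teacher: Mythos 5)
Your proposal is correct and follows the same route as the paper: the text immediately preceding the lemma already records the identification $E[\phi]=E^{(M)}[\phi^{(M)}]$ as Galois modules, and the paper's proof then just applies Lemma \ref{Local} to equate the local conditions place by place and invokes Proposition \ref{Selmer1}. You have simply made the transport-of-structure step more explicit than the paper does.
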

\begin{proof}
By Lemma \ref{Local}, we see that $\kappa_{\ell,\phi}(E)=\kappa_{\ell,\phi^{(M)}}(E^{(M)})$ holds for any places. Therefore, we have
\[ \Sel_{\phi^{(M)}}(E^{(M)})=\Sel_\phi(E)=\BZ/2\BZ.\]
The result for $E'$ and ${E'}^{(M)}$ follows similarly.
\end{proof}

\begin{prop}\label{SelmerM}
If all the prime factors of $2N$ split in $\BQ(\sqrt{M})$, then 
$$
\Sel_2(E^{(M)})=\BZ/2\BZ \ \ \text{and} \ \ \Sha(E^{(M)})[2]=0.
$$
\end{prop}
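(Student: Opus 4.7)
The plan is to reverse the argument of Proposition \ref{Selmer1}: starting from the fact that $\Sel_{\phi^{(M)}}(E^{(M)}) \cong \Sel_{{\phi'}^{(M)}}({E'}^{(M)}) \cong \BZ/2\BZ$ by Lemma \ref{Selmer2}, I deduce the size of $\Sel_2(E^{(M)})$ via the standard six-term isogeny comparison sequence, and then read off $\Sha(E^{(M)})[2]$ from the $2$-descent short exact sequence.

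The first step is to check that $E^{(M)}$ inherits Condition $(\mathrm{Tor})$. Because multiplication by $-1$ acts trivially on points of order $2$, the quadratic twist character is trivial on $E[2]$, so there are canonical $G_{\BQ}$-isomorphisms $E^{(M)}[2] \cong E[2]$ and ${E'}^{(M)}[2] \cong E'[2]$. Hence $E^{(M)}[2](\BQ) = {E'}^{(M)}[2](\BQ) = \BZ/2\BZ$, and the argument of Proposition \ref{tor}, which only uses Condition $(\mathrm{Tor})$, gives $E^{(M)}(\BQ)[2^\infty] = \BZ/2\BZ$. Combined with the rank zero statement of Theorem \ref{main1}, this yields
$$
E^{(M)}(\BQ)/2E^{(M)}(\BQ) \cong \BZ/2\BZ.
$$

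Next I would write down the familiar six-term exact sequence attached to $\phi^{(M)}$ and its dual:
$$
0 \to \frac{{E'}^{(M)}(\BQ)[{\phi'}^{(M)}]}{\phi^{(M)}(E^{(M)}(\BQ)[2])} \to \Sel_{\phi^{(M)}}(E^{(M)}) \to \Sel_2(E^{(M)}) \to \Sel_{{\phi'}^{(M)}}({E'}^{(M)}) \to \frac{\Sha({E'}^{(M)})[{\phi'}^{(M)}]}{\phi^{(M)}(\Sha(E^{(M)})[2])} \to 0.
$$
Since $E^{(M)}(\BQ)[2]$ is precisely the kernel of $\phi^{(M)}$, its image under $\phi^{(M)}$ is trivial, while $\ker({\phi'}^{(M)}) = \phi^{(M)}(E^{(M)}[2])$ is Galois-fixed because the swap of the two non-rational points of $E^{(M)}[2]$ becomes trivial after applying $\phi^{(M)}$; hence ${E'}^{(M)}(\BQ)[{\phi'}^{(M)}] = \BZ/2\BZ$ and the leftmost quotient is $\BZ/2\BZ$. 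Since Lemma \ref{Selmer2} identifies both $\Sel_{\phi^{(M)}}(E^{(M)})$ and $\Sel_{{\phi'}^{(M)}}({E'}^{(M)})$ with $\BZ/2\BZ$, the first map must be an isomorphism, so the connecting map $\Sel_{\phi^{(M)}}(E^{(M)}) \to \Sel_2(E^{(M)})$ is zero, and by exactness $\Sel_2(E^{(M)})$ injects into $\Sel_{{\phi'}^{(M)}}({E'}^{(M)}) \cong \BZ/2\BZ$. This gives the upper bound $|\Sel_2(E^{(M)})| \leq 2$.

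For the final step, I would feed this upper bound together with the lower bound $\BZ/2\BZ = E^{(M)}(\BQ)/2E^{(M)}(\BQ) \hookrightarrow \Sel_2(E^{(M)})$ into the $2$-descent exact sequence
$$
0 \to E^{(M)}(\BQ)/2E^{(M)}(\BQ) \to \Sel_2(E^{(M)}) \to \Sha(E^{(M)})[2] \to 0
$$
to conclude that $\Sel_2(E^{(M)}) = \BZ/2\BZ$ and $\Sha(E^{(M)})[2] = 0$. There is no serious obstacle; the only mildly delicate point is the verification that the leftmost term of the six-term sequence is exactly $\BZ/2\BZ$, which is purely a matter of tracking the Galois action on $\ker({\phi'}^{(M)}) = \phi^{(M)}(E^{(M)}[2])$ and using Condition $(\mathrm{Tor})$ for the twisted curves.
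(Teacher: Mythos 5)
Your proposal is correct and follows essentially the same route as the paper: the paper likewise invokes Lemma \ref{Selmer2} together with the isogeny-descent exact sequence
$0 \to {E'}^{(M)}(\BQ)[{\phi'}^{(M)}]/\phi^{(M)}(E^{(M)}(\BQ)[2]) \to \Sel_{\phi^{(M)}}(E^{(M)}) \to \Sel_{2}(E^{(M)}) \to \Sel_{{\phi'}^{(M)}}({E'}^{(M)})$
to pin down $\Sel_2(E^{(M)}) = \BZ/2\BZ$ and then reads off $\Sha(E^{(M)})[2]=0$. Your write-up is only more explicit about the leftmost term and about $E^{(M)}$ inheriting Condition $(\mathrm{Tor})$; the appeal to rank zero from Theorem \ref{main1} is not strictly needed for the lower bound (the $2$-torsion point already lies outside $2E^{(M)}(\BQ)$ by $(\mathrm{Tor})$), but it is harmless and non-circular since Theorem \ref{main1} is proved earlier.
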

\begin{proof}
The first assertion follows from Lemma \ref{Selmer2}, and the following exact sequence:
\[0 \rightarrow E'^{(M)}(\BQ)[\phi'^{(M)}]/\phi^{(M)}(E^{(M)}(\BQ)[2]) \rightarrow \] \[\Sel_{\phi^{(M)}}(E^{(M)}) \rightarrow \Sel_{2}(E^{(M)}) \rightarrow \Sel_{\phi'^{(M)}}(E'^{(M)}),\]
Noting  $\Sel_2(E^{(M)})=E^{(M)}(\BQ)[2]=\BZ/2\BZ$, we have $\Sha(E^{(M)})[2]=0$.
\end{proof}

\begin{lem}\label{p}
The prime $p$ is inert in $\BQ(\sqrt{\Delta_E})$, and split in $\BQ(\sqrt{\Delta_{E'}})$.
\end{lem}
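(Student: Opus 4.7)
The plan is to write $E$ in the Weierstrass form $y^2=x^3+ax^2+bx$ used in the proof of Proposition~\ref{Delta}, so that $E'$ is given by $y^2=x^3-2ax^2+(a^2-4b)x$, and to reduce the lemma to a Legendre-symbol computation at $p$. From $\Delta_E=16b^2(a^2-4b)$ and $\Delta_{E'}=2^8 b(a^2-4b)^2$ one reads off $\BQ(\sqrt{\Delta_E})=\BQ(\sqrt{a^2-4b})$ and $\BQ(\sqrt{\Delta_{E'}})=\BQ(\sqrt{b})$, and Proposition~\ref{Delta} gives $b>0$ and $a^2-4b<0$. Since $p$ ramifies in $K=\BQ(\sqrt{-p})$ while the Heegner hypothesis forces every $\ell\mid N$ to split in $K$, we get $p\nmid N$ and hence $p\nmid b(a^2-4b)$. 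Using $\left(\frac{-1}{p}\right)=-1$, the desired conclusion becomes equivalent to
$$\left(\frac{b}{p}\right)=1 \qquad \text{and} \qquad \left(\frac{4b-a^2}{p}\right)=1.$$

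Next I would eliminate all odd prime contributions. For any odd prime $\ell\mid b(a^2-4b)$, the polynomial $x(x^2+ax+b)$ has a repeated root modulo $\ell$, so $E$ has bad reduction at $\ell$ and thus $\ell\mid N$. The Heegner hypothesis then yields $\left(\frac{-p}{\ell}\right)=1$, and quadratic reciprocity together with $p\equiv 3 \mod 4$ translates this into $\left(\frac{\ell}{p}\right)=1$. Setting $c:=v_2(b)$ and $d:=v_2(a^2-4b)$, the only surviving contributions to the two Legendre symbols come from the prime $2$, so they reduce to $\left(\frac{2}{p}\right)^{c}$ and $\left(\frac{2}{p}\right)^{d}$ respectively. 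It remains to show that both of these are $1$.

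This $2$-adic step is the only real obstacle, and I would resolve it by a case split on the conductor at $2$. If $2\mid N$, the Heegner hypothesis at $2$ forces $2$ to split in $K$; since $p\equiv 3 \mod 4$ makes $-p$ the discriminant of $K$, splitting at $2$ is equivalent to $-p\equiv 1 \mod 8$, i.e.\ $p\equiv 7 \mod 8$, and hence $\left(\frac{2}{p}\right)=1$. If instead $2\nmid N$, then both $E$ and the isogenous $E'$ have good reduction at $2$, so their minimal Weierstrass discriminants are $2$-adic units, and the discriminant of any integral Weierstrass model of $E$ (respectively $E'$) differs from $\Delta_{\min}$ by a twelfth power of a rational number. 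Applied to our chosen models, this forces
$$v_2(\Delta_E)=4+2c+d\equiv 0 \mod 12 \qquad \text{and} \qquad v_2(\Delta_{E'})=8+c+2d\equiv 0 \mod 12.$$
Reducing each congruence modulo $2$ gives $c\equiv d\equiv 0 \mod 2$, so $\left(\frac{2}{p}\right)^{c}=\left(\frac{2}{p}\right)^{d}=1$ regardless of the value of $\left(\frac{2}{p}\right)$. Combining the two cases finishes the proof.
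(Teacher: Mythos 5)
Your approach is correct in spirit but takes a more computational route than the paper, and one step in the odd-prime reduction has a genuine gap. The paper's argument is simpler: it works directly with the \emph{minimal} discriminants. Since $E$ has bad reduction exactly at primes dividing $N$, the minimal discriminant $\Delta_E$ is supported on primes dividing $N$. The Heegner hypothesis, together with quadratic reciprocity (using $p\equiv 3\bmod 4$), gives $\left(\frac{\ell}{p}\right)=\left(\frac{-p}{\ell}\right)=1$ for every $\ell\mid N$, hence $\left(\frac{|\Delta_E|}{p}\right)=1$; combined with $\Delta_E<0$ this gives $\left(\frac{\Delta_E}{p}\right)=\left(\frac{-1}{p}\right)=-1$ at once, and $\Delta_{E'}>0$ gives the second assertion in the same way. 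No explicit Weierstrass model or case split at $2$ is needed.

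The gap in your argument is the claim that for an odd prime $\ell\mid b(a^2-4b)$, the repeated root of $x(x^2+ax+b)$ modulo $\ell$ forces $\ell\mid N$. A singular reduction of a \emph{particular} integral model does not imply bad reduction of the curve: the model $y^2=x^3+ax^2+bx$ chosen in the proof of Proposition~\ref{Delta} is not asserted to be minimal at odd primes. For instance $y^2=x^3+81x$ has $3\mid b$ and a singular reduction mod $3$, yet the underlying curve has good reduction at $3$. So the conclusion $\ell\mid N$ can fail, and with it the step $\left(\frac{\ell}{p}\right)=1$.

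The fix, however, is precisely the $12$th-power argument you already deployed for $\ell=2$. If $\ell\nmid N$ is odd, then both $E$ and $E'$ have good reduction at $\ell$, so $v_\ell(\Delta_E)=2v_\ell(b)+v_\ell(a^2-4b)\equiv 0\pmod{12}$ and $v_\ell(\Delta_{E'})=v_\ell(b)+2v_\ell(a^2-4b)\equiv 0\pmod{12}$. Reducing mod $2$ forces $v_\ell(b)$ and $v_\ell(a^2-4b)$ to both be even, so the contribution of $\ell$ to $\left(\frac{b}{p}\right)$ and $\left(\frac{4b-a^2}{p}\right)$ is trivial regardless of $\left(\frac{\ell}{p}\right)$. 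Splitting the odd primes into $\ell\mid N$ (where the Heegner hypothesis applies) and $\ell\nmid N$ (where the parity argument applies) closes the gap and makes your proof correct. The resulting argument is a valid alternative, though noticeably longer than the paper's.
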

\begin{proof}
Since $(E,K=\BQ(\sqrt{-p}))$ satisfies the Heegner hypothesis, for all prime $\ell \mid N$, we have 
\[\left(\frac{\ell}{p}\right)=\left(\frac{-p}{\ell}\right)=+1.\]
By Proposition \ref{dis}, $\Delta_E<0$. Then, noting, modulo squares, $N$ and $\Delta_E$ have the same prime factors, it follows that 
\[\left(\frac{\Delta_E}{p}\right)=\left(\frac{-1\cdot |\Delta_E|}{p}\right)=\left(\frac{-1}{p}\right)=-1.\]
Hence $p$ is inert in $\BQ(\sqrt{\Delta_E})$. Similarly, the second assertion is also true since $\Delta_{E'}>0$ by Proposition \ref{Delta}.
\end{proof}

The following result is due to Cassels \cite{Cassels65}. 
\begin{lem}\label{Cassels}
We have 
$$
\frac{|\Sel_{\phi}(E)|}{|\Sel_{\phi'}(E')|} = \prod_\ell \frac{|\kappa_{\ell,\phi}(E)|}{2}. 
$$
\end{lem}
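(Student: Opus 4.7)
The plan is to derive this as an instance of the Greenberg--Wiles (or Poitou--Tate) global duality formula applied to the Galois module $T = E[\phi]$, its Cartier dual $T^{*}=E'[\phi']$ (paired via the Weil pairing $E[\phi]\times E'[\phi']\to \mu_2$), and the Selmer structures furnished by the local Kummer images. In this generality the formula reads
\[
\frac{|\Sel_{\phi}(E)|}{|\Sel_{\phi'}(E')|} \;=\; \frac{|\RH^0(\BQ, E[\phi])|}{|\RH^0(\BQ, E'[\phi'])|}\cdot \prod_{\ell}\frac{|\kappa_{\ell,\phi}(E)|}{|\RH^0(\BQ_\ell, E[\phi])|},
\]
so the problem reduces to identifying $\kappa_{\ell,\phi'}(E')$ as the dual of $\kappa_{\ell,\phi}(E)$, and to simplifying the cohomological terms.

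The first step is the local input. By Tate local duality, $\RH^1(\BQ_\ell, E[\phi])$ and $\RH^1(\BQ_\ell, E'[\phi'])$ are in perfect pairing via the cup product followed by the Weil pairing and the invariant map $\RH^2(\BQ_\ell, \mu_2)=\BZ/2\BZ$; moreover, the Kummer images $\kappa_{\ell,\phi}(E)$ and $\kappa_{\ell,\phi'}(E')$ are exact orthogonal complements under this pairing. This is the classical compatibility of the Kummer map with Tate duality for an isogeny of abelian varieties, and it is what allows the Poitou--Tate 9--term sequence to be turned into the displayed duality formula above, by a straightforward counting argument from the exact sequence.

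The second step is to carry out the cancellations. Under Condition $(\mathrm{Tor})$, we have $E[\phi]=E[2](\BQ)\cong \BZ/2\BZ$ with trivial Galois action, and likewise $E'[\phi']\cong\BZ/2\BZ$ with trivial action. Consequently $\RH^0(\BQ, E[\phi])=\RH^0(\BQ, E'[\phi'])=\BZ/2\BZ$, so the global ratio is $1$; and for every place $\ell$ (finite or infinite) $\RH^0(\BQ_\ell, E[\phi])=E[\phi]=\BZ/2\BZ$, so each local denominator equals $2$. Substituting into the Greenberg--Wiles formula yields exactly
\[
\frac{|\Sel_{\phi}(E)|}{|\Sel_{\phi'}(E')|} = \prod_\ell \frac{|\kappa_{\ell,\phi}(E)|}{2},
\]
as desired.

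The main obstacle is really a bookkeeping one: verifying carefully that the Selmer structures defined by the geometric Kummer maps $\kappa_{\ell,\phi}(E)$ and $\kappa_{\ell,\phi'}(E')$ are genuinely dual with respect to the Weil pairing at every place $\ell$ (including the archimedean and the bad finite places), so that the Poitou--Tate machinery applies without modification. Once this local compatibility is in hand the rest is formal, and since only finitely many local factors $|\kappa_{\ell,\phi}(E)|$ differ from $2$, the product on the right is finite and well defined.
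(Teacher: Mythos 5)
The paper does not prove this lemma at all---it simply attributes it to Cassels and cites \cite{Cassels65}---so there is no in-paper argument to compare against. Your proposal, supplying a proof via the Greenberg--Wiles (Poitou--Tate) global duality formula, is correct and is the standard modern way to recover Cassels' result. You have the right local input: the Weil pairing makes $E[\phi]$ and $E'[\phi']$ Cartier dual, and the Kummer images $\kappa_{\ell,\phi}(E)$ and $\kappa_{\ell,\phi'}(E')$ are exact annihilators of one another under the local Tate pairing at every place $\ell$, including the archimedean one and the primes of bad reduction; this is the classical compatibility that makes the Kummer conditions a dual pair of Selmer structures. Your cancellations are also right: under Condition $(\mathrm{Tor})$, $E[\phi]=E[2](\BQ)$ and $\ker\phi'$ are each isomorphic to $\BZ/2\BZ$ with trivial $G_{\BQ}$-action (the nontrivial point of $\ker\phi'$ is necessarily rational since the group has order two), so $|\RH^0(\BQ, E[\phi])|=|\RH^0(\BQ, E'[\phi'])|=2$ and $|\RH^0(\BQ_\ell, E[\phi])|=2$ for every $\ell$, reducing the Greenberg--Wiles formula exactly to the stated equality. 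The only thing to be careful about, which you flag, is that the product is a priori infinite but is in fact finite because for $\ell\nmid 2N\infty$ both Kummer images are the unramified condition and $|\RH^1_{\mathrm{ur}}(\BQ_\ell, E[\phi])|=|\RH^0(\BQ_\ell, E[\phi])|=2$, so those factors are $1$.
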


\begin{prop}\label{Selmer-pM}
Let $E$, $p$ and $M$ be as in Theorem \ref{main1}. If all the prime factors of $2N$ split in $\BQ(\sqrt{M})$ and $p\equiv -1\mod 8$, then $\Sel_2(E^{(-pM)})=(\BZ/2\BZ)^2$ and $\Sha(E^{(-pM)})[2]=0$.
\end{prop}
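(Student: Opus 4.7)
The target is $|\Sel_2(E^{(-pM)})|=4$ and $\Sha(E^{(-pM)})[2]=0$. Theorem \ref{main1'} gives $\rk E^{(-pM)}(\BQ)=1$, so with $E^{(-pM)}(\BQ)_{\tor}\supseteq\BZ/2\BZ$ we have $E^{(-pM)}(\BQ)/2E^{(-pM)}(\BQ)\cong(\BZ/2\BZ)^2$. Via the Kummer sequence both assertions follow from the upper bound $|\Sel_2(E^{(-pM)})|\leq 4$. My plan is to compare local Kummer images of $E^{(-pM)}$ with those of $E^{(M)}$ (pinned down in Proposition \ref{SelmerM}) and then apply Cassels's formula (Lemma \ref{Cassels}), with the only non-trivial input being a direct local 2-descent at $p$.

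First I would prove $\kappa_{\ell,\phi^{(-pM)}}(E^{(-pM)})=\kappa_{\ell,\phi^{(M)}}(E^{(M)})$ (and the analogous equality for $\phi'$) at every $\ell\neq p$, by showing the twist $\BQ(\sqrt{-p})/\BQ$ is locally trivial at the relevant places. At $\ell=\infty$, $\Delta_{E^{(-pM)}}<0$ so real points are connected. The hypothesis $p\equiv -1\pmod 8$ gives $-p\equiv 1\pmod 8$, forcing $2$ to split in $\BQ(\sqrt{-p})$; combined with splitting of $2$ in $\BQ(\sqrt M)$ this yields a $\BQ_2$-isomorphism $E^{(-pM)}\cong E^{(M)}$. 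The Heegner hypothesis makes each $\ell\mid N$ split in $\BQ(\sqrt{-p})$. For $\ell\mid M$ with $\ell\neq p$, Klagsbrun's Lemma 6.8 applies as in Lemma \ref{Local}. For all remaining $\ell\neq p$, both curves have good reduction and the twist is unramified at $\ell$, so both Kummer images equal the unramified subgroup of $H^1(\BQ_\ell,\BZ/2\BZ)$.

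Second comes the local analysis at $p$, which is the heart of the argument. Lemma \ref{p} gives $E^{(-pM)}(\BQ_p)[2]=\BZ/2\BZ$ but ${E'}^{(-pM)}(\BQ_p)[2]=(\BZ/2\BZ)^2$. On the Weierstrass model ${E'}^{(-pM)}:y^2=x(x^2-2a(-pM)x+(a^2-4b)(-pM)^2)$ the 2-descent map $(x,y)\mapsto x\pmod{(\BQ_p^\times)^2}$ sends the three non-trivial rational 2-torsion points to $\{a^2-4b,\;-pM(a+2\sqrt b),\;-pM(a-2\sqrt b)\}$. By Lemma \ref{p}, $\sqrt b\in\BQ_p$ and $a^2-4b$ is a non-square unit at $p$; combined with $p\equiv 3\pmod 4$ (making $-1$ a non-square unit), these three classes together with $1$ exhaust $\BQ_p^\times/(\BQ_p^\times)^2$: two have even $p$-adic valuation (realizing both unit classes $\{1,u\}$), and two have odd valuation (realizing $\{p,up\}$). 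Hence $|\kappa_{p,\phi^{(-pM)}}(E^{(-pM)})|=4$, and local Tate duality forces $|\kappa_{p,\phi'^{(-pM)}}({E'}^{(-pM)})|=1$.

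Third, I compute $\Sel_{\phi'^{(-pM)}}({E'}^{(-pM)})$ directly: it sits inside $\BQ^\times/(\BQ^\times)^2$ with local conditions matching those defining $\Sel_{\phi'}(E')=\BZ/2\BZ$ at every $\ell\neq p$, and the stricter condition $\{1\}$ at $p$. The non-trivial class in $\Sel_{\phi'}(E')$ is $[b]\in\BQ^\times/(\BQ^\times)^2$ (the image of the rational 2-torsion $(0,0)\in E$ under the connecting map), and since $p$ splits in $\BQ(\sqrt b)$ by Lemma \ref{p}, it has trivial image at $p$; thus $\Sel_{\phi'^{(-pM)}}\cong\BZ/2\BZ$. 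Applying Cassels's formula to $E^{(-pM)}$ and comparing with the analogous formula for $E^{(M)}$ (which gives product $1$ by Proposition \ref{SelmerM}), the deviation is concentrated at $p$, yielding $|\Sel_{\phi^{(-pM)}}|/|\Sel_{\phi'^{(-pM)}}|=4/2=2$ and hence $|\Sel_{\phi^{(-pM)}}|=4$. Finally the six-term exact sequence
\[0\to\BZ/2\BZ\to\Sel_{\phi^{(-pM)}}\to\Sel_2(E^{(-pM)})\to\Sel_{\phi'^{(-pM)}}\to\cdots\to 0\]
of Proposition \ref{Selmer1} gives $|\Sel_2(E^{(-pM)})|\leq(|\Sel_{\phi^{(-pM)}}|/2)\cdot|\Sel_{\phi'^{(-pM)}}|=2\cdot 2=4$, which with the rank-$1$ lower bound forces $|\Sel_2(E^{(-pM)})|=4$ and $\Sha(E^{(-pM)})[2]=0$. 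The principal obstacle is the 2-descent at $p$ in the second step; the asymmetry of the 2-torsion (cyclic on $E^{(-pM)}$ versus full on ${E'}^{(-pM)}$) forced by Lemma \ref{p} is exactly what creates the jump from the good-reduction value $2$ to the value $4$ and drives the entire Cassels computation.
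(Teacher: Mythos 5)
Your proposal is correct and follows essentially the same strategy as the paper: compare local Kummer conditions at all $\ell\neq p$, compute the jump in the local conditions at $p$, pin down $\Sel_{\phi'^{(-pM)}}$ as $\BZ/2\BZ$, apply Cassels's formula to get $|\Sel_{\phi^{(-pM)}}|=4$, and conclude via the four-term exact sequence together with the rank-$1$ lower bound. The only real difference is localized at $p$: where the paper cites Klagsbrun's Lemma 6.7 to get $\kappa_{p,\phi^{(-pM)}}(E^{(-pM)})=\RH^1(\BQ_p,E[\phi])$ and $\kappa_{p,\phi'^{(-pM)}}({E'}^{(-pM)})=0$, you carry out the explicit descent on the Weierstrass model of ${E'}^{(-pM)}$ and check the images of the three $2$-torsion points exhaust $\BQ_p^\times/(\BQ_p^\times)^2$ (one remark: the containment of $[b]$, $[a^2-4b]$, and the two odd-valuation classes already exhausts the group because $a^2-4b$ is a nonsquare unit; the appeal to $p\equiv 3\bmod 4$ and $-1$ being a nonsquare is not actually needed there); this makes the argument more self-contained but does not change its shape. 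You also pin down $\Sel_{\phi'^{(-pM)}}\supseteq\BZ/2\BZ$ by showing the explicit class $[b]$ from $\Sel_{\phi'}(E')$ is locally trivial at $p$, while the paper uses the image of $E^{(-pM)}(\BQ)_{\tor}$; these are equivalent.
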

\begin{proof}
The proof is similar to that of Proposition \ref{SelmerM}. Since $(E,K=\BQ(\sqrt{-p}))$ satisfies the Heegner hypothesis, we have $\left(\frac{-p}{\ell}\right)=+1$ for all $\ell\mid N$. Moreover, since $p\equiv -1\mod 8$, it follows that $\left(\frac{-p}{2}\right)=+1$. Then we have $\left(\frac{-pM}{\ell}\right)=+1$ for all $\ell\mid 2N$, i.e., any prime $\ell\mid 2N$ is split in $\BQ(\sqrt{-pM})$. Then for any place $\ell\neq p$, as in Lemma \ref{Local}, we have 
$$
\kappa_{\ell,\phi}(E)=\kappa_{\ell,\phi^{(M)}}(E^{(M)}) \ \ \text{and} \ \ \kappa_{\ell,\phi'}(E')=\kappa_{\ell,{\phi'}^{(M)}}({E'}^{(M)}).
$$
It suffices to compare the local Kummer conditions at $p$. By Lemma \ref{p}, $p$ is inert in $\BQ(\sqrt{\Delta_E})$ and splits in $\BQ(\sqrt{\Delta_{E'}})$. Then 
$$
E[2](\BQ_p)=E[2](\BQ)\ \ \text{and} \ \ E'[2](\BQ_p)=E'[2].
$$ 
By \cite[Lemma 6.7]{Klagsbrun17}, $\kappa_{p,\phi^{(-pM)}}(E^{(-pM)})$ (respectively, $\kappa_{p,{\phi'}^{(-pM)}}({E'}^{(-pM)})$) has dimension $2$ (respectively, $0$) over $\BZ/2\BZ$, i.e., 
\[\kappa_{p,\phi^{(-pM)}}(E^{(-pM)})=\RH^1(\BQ_p,E[\phi])\ \ \text{and} \ \ \kappa_{p,{\phi'}^{(-pM)}}({E'}^{(-pM)})=0.\]
On the other hand, since $p$ is good for $E$ and $E'$ and $p$ is prime to the degrees of $\phi$ and $\phi'$, we have 
$$
\kappa_{p,\phi}(E)=\RH^1_\ur(\BQ_p,E[\phi])\ \ \text{and} \ \ \kappa_{p,\phi'}(E')=\RH^1_\ur(\BQ_p,E'[\phi']).
$$ 
By comparing the local Kummer conditions for $\phi'$ and $\phi'^{(-pM)}$, we see 
$$
\Sel_{{\phi'}^{(-pM)}}({E'}^{(-pM)})\subset \Sel_{\phi'}(E')=\BZ/2\BZ.
$$ 
From \[\BZ/2\BZ={E}^{(-pM)}(\BQ)_\tor/\phi'^{(-pM)}({E'}^{(-pM)}(\BQ)_\tor)\subset \Sel_{{\phi'}^{(-pM)}}({E'}^{(-pM)}),\]
we conclude 
$$
\Sel_{{\phi'}^{(-pM)}}({E'}^{(-pM)})= \Sel_{\phi'}(E')=\BZ/2\BZ.
$$ 

By Lemma \ref{Cassels} and Proposition \ref{Selmer1}, we have 
\[\frac{|\Sel_{{\phi}^{(-pM)}}({E}^{(-pM)})|}{|\Sel_{{\phi'}^{(-pM)}}({E'}^{(-pM)})|}=\prod_\ell \frac{|\kappa_{\ell,\phi^{(-pM)}}(E^{(-pM)})|}{2}.\]
and \[\frac{|\Sel_{{\phi}}({E})|}{|\Sel_{{\phi'}}({E'}^{})|}=\prod_\ell \frac{|\kappa_{\ell,\phi}(E)|}{2}=1.\]
Since $\kappa_{\ell,\phi^{(-pM)}}(E^{(-pM)})=\kappa_{\ell,\phi}(E)$ for $\ell\neq p$, it follows that 
\[\frac{|\Sel_{{\phi}^{(-pM)}}({E}^{(-pM)})|}{|\Sel_{{\phi'}^{(-pM)}}({E'}^{(-pM)})|}=\prod_\ell \frac{|\kappa_{\ell,\phi^{(-pM)}}(E^{(-pM)})|}{|\kappa_{\ell,\phi}(E)|}=\frac{|\kappa_{p,\phi^{(-pM)}}(E^{(-pM)})|}{|\kappa_{p,\phi}(E)|}=2.\]
Thus 
\[\Sel_{{\phi'}^{(-pM)}}({E'}^{(-pM)})=\BZ/2\BZ \ \ \text{and} \ \ \Sel_{{\phi}^{(-pM)}}({E}^{(-pM)})\simeq (\BZ/2\BZ)^2.\]
Note $E^{(-pM)}(\BQ)/2E^{(-pM)}(\BQ)=(\BZ/2\BZ)^2$. Considering the exact sequence 
\[0 \rightarrow E'^{(-pM)}(\BQ)[\phi'^{(-pM)}]/\phi^{(-pM)}(E^{(-pM)}(\BQ)[2]) \rightarrow \] \[\Sel_{\phi^{(-pM)}}(E^{(-pM)}) \rightarrow \Sel_{2}(E^{(-pM)}) \rightarrow \Sel_{\phi'^{(-pM)}}(E'^{(-pM)}),\]
we conclude
\[\Sel_{2}(E^{(-pM)}) = E^{(-pM)}(\BQ)/2E^{(-pM)}(\BQ)=(\BZ/2\BZ)^2\text{ and }\Sha(E^{(-pM)})[2]=0,\]
as desired.
\end{proof}

\begin{prop}\label{inv}
Let $E$, $p$ and $M$ be as in Theorem \ref{main1} and suppose that all the prime factors of $2N$ split in $\BQ(\sqrt{M})$ and $p\equiv -1\mod 8$. We have
\[\ord_2\left(\frac{|\Sha(E^{(M)})|}{|E^{(M)}(\BQ)_\tor|^2}\cdot\prod_{\ell}c_\ell(E^{(M)})\right)=r-1,\]
and
\[\ord_2\left(\frac{|\Sha(E^{(-pM)})|}{|E^{(-pM)}(\BQ)_\tor|^2}\cdot\prod_{\ell}c_\ell(E^{(-pM)})\right)=r.\]
\end{prop}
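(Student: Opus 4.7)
The plan is to compute the $2$-adic valuation of each of the three factors $|\Sha|$, $|E(\BQ)_{\tor}|^2$, and $\prod_\ell c_\ell$ separately for the two twists $E^{(M)}$ and $E^{(-pM)}$, then assemble.

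For the Shafarevich--Tate groups, Propositions \ref{SelmerM} and \ref{Selmer-pM} already yield $\Sha(E^{(M)})[2] = \Sha(E^{(-pM)})[2] = 0$, so both orders are odd and contribute nothing to $\ord_2$. For the torsion, I would first note that the quadratic twist $E^{(D)}$ has the same $2$-torsion splitting field as $E$ (the discriminants of the twisted Weierstrass equations differ from $\Delta_E$, $\Delta_{E'}$ by squares), so both $E^{(M)}$ and $E^{(-pM)}$ inherit Condition $(\mathrm{Tor})$. Applying Proposition \ref{tor} to these twists then gives $E^{(M)}[2^\infty](\BQ) = E^{(-pM)}[2^\infty](\BQ) = \BZ/2\BZ$, so $\ord_2|E^{(M)}(\BQ)_{\tor}|^2 = \ord_2|E^{(-pM)}(\BQ)_{\tor}|^2 = 2$.

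For the Tamagawa product I would split the contributions into primes dividing $N$, primes dividing $M$, and (for $E^{(-pM)}$ only) the prime $p$. Both $M$ and $-pM$ are squarefree, coprime to $N$, and $\equiv 1 \mod 4$ (since every $q^*$ and $-p$ is $\equiv 1 \mod 4$), so Proposition \ref{Tamagawa2} applied to each of them contributes a factor of $2^1$ from the primes dividing $N$. For each admissible $q_i \mid M$, the prime $q_i$ is inert in $\BQ(\sqrt{\Delta_E})$, so Proposition \ref{Tamagawa} gives $c_{q_i}(E^{(M)}) = c_{q_i}(E^{(-pM)}) = 2$, adding a further $2^r$ in each case. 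Finally, Lemma \ref{p} shows $p$ is inert in $\BQ(\sqrt{\Delta_E})$, so Proposition \ref{Tamagawa} again gives $c_p(E^{(-pM)}) = 2$, producing one extra factor of $2$ for $E^{(-pM)}$ only.

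Putting the pieces together, the valuations are $0 - 2 + (r+1) = r - 1$ for $E^{(M)}$ and $0 - 2 + (r+2) = r$ for $E^{(-pM)}$, matching the claim. The most delicate step is the torsion one, since the $2$-Selmer data alone only pins down the $2$-rank of $E^{(D)}(\BQ)_{\tor}$; controlling the full $2$-primary torsion requires the twist invariance of $\BQ(E[2])$ together with Proposition \ref{tor}. The remainder is bookkeeping with Propositions \ref{Tamagawa}, \ref{Tamagawa2}, and Lemma \ref{p}.
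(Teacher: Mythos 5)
Your proof is correct and follows essentially the same route as the paper: odd $\Sha$ from Propositions \ref{SelmerM} and \ref{Selmer-pM}, torsion contribution $\ord_2|E^{(D)}(\BQ)_\tor|^2=2$, and the Tamagawa count obtained by combining Proposition \ref{Tamagawa2} (primes dividing $N$) with Proposition \ref{Tamagawa} and Lemma \ref{p} (primes dividing $M$ and $p$). The one place you add detail is in justifying $E^{(M)}(\BQ)[2^\infty]=E^{(-pM)}(\BQ)[2^\infty]=\BZ/2\BZ$ via the twist-invariance of $\BQ(E[2])$ and $\BQ(E'[2])$ together with Proposition \ref{tor}, whereas the paper simply states this equality without elaboration; your fleshing-out is accurate and matches the paper's intent.
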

\begin{proof}
Note that
\[E^{(M)}(\BQ)[2^\infty]=E^{(-pM)}(\BQ)[2^\infty]=E(\BQ)[2^\infty]=\BZ/2\BZ,\]
and by Proposition \ref{SelmerM} and \ref{Selmer-pM} we have 
\[\Sha(E^{(M)})[2^\infty]=\Sha(E^{(-pM)})[2^\infty]=\Sha(E)[2^\infty]=0.\]

Since, by assumption, any prime $q\mid M$ and, by Lemma \ref{p}, the prime $p$ are inert in $\BQ(\sqrt{\Delta_E})$,  it follows from Proposition \ref{Tamagawa} that
\[c_q(E^{(M)})=c_q(E^{(-pM)})=2 \text{ \ and \ } c_p(E^{(-pM)})=2.\]
Then the assertion of the proposition follows from Proposition \ref{Tamagawa2}.
\end{proof}

\subsection{The $2$-part of the Birch and Swinnerton-Dyer conjecture}
Recall that, for any elliptic curve defined over $\BQ$, the Birch and Swinnerton-Dyer conjecture predicts that
\begin{equation}\label{fbsd}
\frac{L^{(r_\an)}(E,1)}{{r_\an}!\Omega_E R(E) }= \frac{\prod_\ell c_\ell(E)\cdot |\Sha(E)|}{|E(\BQ)_\mathrm{tor}|^2},  
\end{equation}
where $r_\an:=\ord_{s=1}L(E,s)$, and $R(E)$ is the regulator formed with the N\'{e}ron--Tate pairing. It is known that $R(E)=1$ when $r_\an=0$, and $R(E)=\wh{h}_\BQ(P)$ when $r_\an=1$, where $P$ is a free generator of $E(\BQ)$, and $\wh{h}_\BQ(P)$ is the N\'eron--Tate height on $E$ over $\BQ$. At present, the finiteness of $\Sha(E)$ is only known when $r_\an$ is at most $1$, in which case it is also known that $r_\an$ is equal to the rank of $E(\BQ)$. Suppose the $2$-part of \eqref{fbsd} holds for $E$, we shall show that the $2$-part of \eqref{fbsd} holds for both $E^{(M)}$ and $E^{(-pM)}$ under mild assumptions.

\begin{thm}\label{2BSD}
Let $E$, $p$ and $M$ be as in Theorem \ref{main1} and suppose that all the prime factors of $2N$ split in $\BQ(\sqrt{M})$,  $p\equiv -1\mod 8$ and $E$ has odd Manin constant.
Then we have
$$
\ord_{s=1}L(E^{(M)},s)=\rk E^{(M)}(\BQ)=0, \text{ \ and \ } \ord_{s=1}L(E^{(-pM)},s)=\rk E^{(-pM)}(\BQ)=1;
$$
$$
\ord_2(L(E^{(M)},1)/\Omega_{E^{(M)}})=r-1, \text{ \ and \ } \ord_2(L'(E^{(-pM)},1)/\Omega_{E^{(-pM)}}R(E^{(-pM)}))=r.
$$
Moreover, the Shafarevich--Tate groups $\Sha(E^{(M)})$ and $\Sha(E^{(-pM)})$ are both finite of odd cardinalities. If the $2$-part of the Birch and Swinnerton-Dyer conjecture holds for $E$, then the $2$-part of the Birch and Swinnerton-Dyer conjecture holds for both $E^{(M)}$ and $E^{(-pM)}$.
\end{thm}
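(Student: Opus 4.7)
The plan is to combine four earlier results: Theorem \ref{main1'} for the rank equalities, Propositions \ref{SelmerM} and \ref{Selmer-pM} for the triviality of $\Sha[2]$, Proposition \ref{inv} for the $2$-adic valuation of the Birch and Swinnerton-Dyer right-hand side, and Corollary \ref{GZ1} together with Theorem \ref{divisibility} for the $L$-value product. These inputs are arranged so that every $2$-adic invariant on the analytic side can be matched against one on the arithmetic side.

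First I would record that the equalities $\ord_{s=1}L(E^{(M)},s)=\rk E^{(M)}(\BQ)=0$ and $\ord_{s=1}L(E^{(-pM)},s)=\rk E^{(-pM)}(\BQ)=1$ are exactly Theorem \ref{main1'}, which simultaneously yields the finiteness of $\Sha(E^{(M)})$ and $\Sha(E^{(-pM)})$ via Gross--Zagier and Kolyvagin. Propositions \ref{SelmerM} and \ref{Selmer-pM} give $\Sha(E^{(M)})[2]=0$ and $\Sha(E^{(-pM)})[2]=0$, so both Shafarevich--Tate groups have odd cardinality.

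Next, for the $2$-adic valuations of the $L$-values, since $\rk E^{(-pM)}(\BQ)=1$ and $\Sha(E^{(-pM)})[2]=0$, the free part of $E^{(-pM)}(\BQ)$ is generated by a single point $P$, and I would write $z_M = nP + t$ with $t \in E^{(-pM)}(\BQ)_\tor$. Theorem \ref{divisibility} gives $\ord_2(n)=r-1$, hence $\widehat{h}_\BQ(z_M)=n^2 R(E^{(-pM)})$, and Corollary \ref{GZ1} rearranges to
\[
\frac{L(E^{(M)},1)}{\Omega^{(M)}} \cdot \frac{L'(E^{(-pM)},1)}{\Omega^{(-pM)} R(E^{(-pM)})} = 2n^2,
\]
so the product has $2$-adic valuation $2r-1$. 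On the arithmetic side, Proposition \ref{inv} shows that the BSD right-hand sides of $E^{(M)}$ and $E^{(-pM)}$ have $2$-adic valuations $r-1$ and $r$ respectively, which also sum to $2r-1$. Matching the total together with one-sided upper bounds on each $L$-value valuation (coming from $2$-descent applied to the Selmer groups $\Sel_\phi$ computed in Section 4.2) forces the individual valuations on the $L$-side to be $r-1$ and $r$ as well.

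Once the individual $2$-adic valuations are in place, the final conditional assertion is formal: by Corollary \ref{24E} the $2$-part of the Birch and Swinnerton-Dyer conjecture for $E$ is the equality of $\ord_2$ of the two sides of its BSD formula, and by the valuation computations the corresponding equality holds for each of $E^{(M)}$ and $E^{(-pM)}$. The hard part is the splitting step: Gross--Zagier only delivers the product of the two $L$-value ratios, not each individually, so separating $2r-1$ into $r-1$ and $r$ requires an additional input. I expect the cleanest route is a Waldspurger-type explicit formula for $L(E^{(M)},1)/\Omega^{(M)}$ via modular symbols, generalizing the argument of Proposition \ref{dis} from $M=1$ to an arbitrary product of admissible primes by an induction on $r$ parallel in spirit to the Heegner-point induction of Theorem \ref{divisibility}; alternatively a one-sided Selmer inequality pinning $\ord_2(L(E^{(M)},1)/\Omega^{(M)})$ from above by $r-1$ would suffice, with the complementary valuation on the $E^{(-pM)}$ side then extracted from Gross--Zagier.
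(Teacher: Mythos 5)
Your proposal follows the paper's own proof very closely and correctly isolates every ingredient: the rank equalities come from Theorem \ref{main1'}, the oddness of $|\Sha(E^{(M)})|$ and $|\Sha(E^{(-pM)})|$ from Propositions \ref{SelmerM} and \ref{Selmer-pM}, the arithmetic sides of the two BSD formulas from Proposition \ref{inv} (valuations $r-1$ and $r$), and the product of the two analytic sides from Corollary \ref{GZ1} combined with Theorem \ref{divisibility}. You are also right that this leaves a genuine splitting problem, since Gross--Zagier only controls the product $L(E^{(M)},1)L'(E^{(-pM)},1)$. The paper resolves it exactly by the first route you conjecture: it cites \cite[Theorem 5.2]{Zhaint}, a Waldspurger-type exact formula for quadratic twists (proved via modular symbols in the spirit of Proposition \ref{dis}), which directly gives $\ord_2\bigl(L(E^{(M)},1)/\Omega^{(M)}\bigr)=r-1$. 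With that in hand, the $2$-part of BSD for $E^{(M)}$ follows from Proposition \ref{inv}, and the $2$-part for $E^{(-pM)}$ follows from the product identity (\ref{div}) together with the exact $2$-divisibility of $z_M$ in Theorem \ref{divisibility}, exactly as you outline. The alternative one-sided Selmer inequality you float is not what the paper uses and would in fact require a complementary lower bound to close; the direct $L$-value valuation is the ingredient that is actually invoked.
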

\begin{proof}
The rank part of the Birch and Swinnerton-Dyer conjecture for the two elliptic curves $E^{(M)}$ and $E^{(-pM)}$ has been established in Theorem \ref{main1'}. In particular, by the work of Gross--Zagier and Kolyvain, the Shafarevich--Tate groups $\Sha(E^{(M)}), \Sha(E^{(-pM)})$ are finite. Moreover, the cardinalities of both $\Sha(E^{(M)})$ and $\Sha(E^{(-pM)})$ are odd by Proposition \ref{SelmerM} and Proposition \ref{Selmer-pM}.

First consider the quadratic twists $E^{(M)}$. The full Birch and Swinnerton-Dyer conjecture predicts
\[\frac{L(E^{(M)},1)}{\Omega^{(M)}}=^?\frac{|\Sha(E^{(M)})|}{|E^{(M)}(\BQ)_\tor|^2}\cdot\prod_{\ell}c_\ell(E^{(M)}).\leqno{\mathrm{BSD(E^{(M)})}}\]
By Proposition \ref{inv} and \cite[Theorem 5.2]{Zhaint}, both sides of ${\mathrm{BSD(E^{(M)})}}$ have $2$-adic valuation $r-1$, and hence the $2$-part of the Birch and Swinnerton-Dyer conjecture for $E^{(M)}$ follows.

Next we consider the quadratic twists $E^{(-pM)}$. The full Birch and Swinnerton-Dyer conjecture predicts
\[\label{BSD2}\frac{L'(E^{(-pM)},1)}{\Omega^{(-pM)}}=^?\wh{h}_\BQ(P)\frac{|\Sha(E^{(-pM)})|}{|E^{(-pM)}(\BQ)_\tor|^2}\cdot\prod_{\ell}c_\ell(E^{(-pM)}),\leqno{\mathrm{BSD(E^{(-pM)})}} \]
where $P$ is a free generator of $E^{(-pM)}(\BQ)$. 
By the explicit height formula in Corollary \ref{GZ1}, $\mathrm{BSD(E^{(M)})}$ and $\mathrm{BSD(E^{(-pM)})}$ amount to
\begin{equation}\label{div}
\ord_2\left(\frac{\wh{h}_\BQ(z_M)}{\wh{h}_\BQ(P)}\right)=^?\ord_2\left(2^{-1}\cdot \frac{|\Sha(E^{(M)})||\Sha(E^{(-pM)})|}{|E^{(M)}(\BQ)_\tor|^2|E^{(-pM)}(\BQ)_\tor|^2}\cdot\prod_{\ell}\left(c_\ell(E^{(M)})c_\ell(E^{(-pM)})\right)\right).
\end{equation}
By Proposition \ref{inv}, the RHS is $2(r-1)$, and therefore the above equality (\ref{div}) follows from the exact $2$-divisibility of the Heegner point  $z_M$ in Theorem \ref{divisibility}. Since the $2$-part of ${\mathrm{BSD(E^{(M)})}} $ is proved, the $2$-part of ${\mathrm{BSD(E^{(-pM)})}}$ follows.
\end{proof}

\bigskip

\section{Applications}\label{eg}
In this final section, we shall illustrate our general results for the family of  quadratic twists both of the Neumann--Setzer elliptic curves, and also some elliptic curves of small conductor.

\subsection{The Neumann--Setzer elliptic curves}
Recall that the Neumann--Setzer elliptic curves have prime conductor $p_0$ (see \cite{Neumann71}, \cite{Neumann73} and \cite{Setzer75}), where $p_0$ is any prime of the form $p_0=u^2+64$ for some integer $u \equiv 1 \mod 4$, for example, $p_0=73, 89, \ldots$. Then, up to isomorphism, it is known that there are just two elliptic curves of conductor $p_0$ with a rational $2$-division point, namely,
\begin{eqnarray}\label{equa-A}
A :& y^2 + xy &= x^3 + \frac{u-1}{4}x^2 + 4x + u, \\
A':& y^2 + xy &= x^3 -\frac{u-1}{4}x^2 -x.
\end{eqnarray}
The curves $A$ and $A'$ are $2$-isogenous,  and both have Mordell--Weil groups $\BZ/2\BZ$. A simple computation shows that $\Delta_A=-p_0^2$ and $\Delta_{A'}=p_0$, it follows that
$$
\BQ(A[2])=\BQ(i), \ \BQ(A'[2])=\BQ(\sqrt{p_0}).
$$ 
Let $X_0(p_0)$ be the modular curve of level $p_0$, and there is a non-constant rational map 
$$X_0(p_0) \to A,$$ 
making the modular parametrization $\Gamma_0(p_0)$-optimal by Mestre and Oesterl\'e \cite{MO89}. Since the conductor of $A$ is odd, the Manin constant of $A$ is odd by the work of Abbes and Ullmo \cite{AU96}. In particular, we have the following result on applying our main theorem.

\begin{thm}\label{A-BSD}
Assume that $p_0$ is a prime of the form $u^2 + 64$ with $u \equiv 5 \mod 8$, and let $A$ be the Neumann--Setzer curve \eqref{equa-A}. Let $q_1,\cdots,q_r$ be distinct primes congruent to $3$ modulo $4$ which are inert in $\BQ(\sqrt{p_0})$. Let $p \equiv 3 \mod 4$ be a prime greater than $3$ which splits in $\BQ(\sqrt{p_0})$. Denote $M=q_1^*\cdots q_r^*$. Then we have 
$$
\ord_{s=1}L(A^{(M)},s)=\rk\  A^{(M)}(\BQ)=0  \text{ \ and \ } \ord_{s=1}L(A^{(-pM)},s)=\rk\  A^{(-pM)}(\BQ)=1.
$$
In particular, $\Sha(A^{(M)})$ and $\Sha(A^{(-pM)})$ are both finite of odd cardinality. Moreover, the $2$-part of the Birch and Swinnerton-Dyer conjecture is valid for both $A^{(M)}$ and $A^{(-pM)}$.
\end{thm}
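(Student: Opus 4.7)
The plan is to show that the Neumann--Setzer curve $A$ satisfies the hypotheses of Theorem \ref{main1} and Theorem \ref{2BSD}, and then to conclude by direct invocation of those theorems. First, I would check Condition $(\mathrm{Tor})$: the excerpt records $A(\BQ)=A'(\BQ)=\BZ/2\BZ$ and $\BQ(A[2])=\BQ(i)$, $\BQ(A'[2])=\BQ(\sqrt{p_0})$ are (genuine) quadratic extensions, whence $A[2](\BQ)=A'[2](\BQ)=\BZ/2\BZ$. Next I would verify $f([0])\not\in 2A(\BQ)$. Because $A(\BQ)=A(\BQ)_{\tor}=\BZ/2\BZ$, the subgroup $2A(\BQ)$ is trivial, so the task reduces to $f([0])\neq 0$; this can be read off from the cuspidal subgroup at prime level (Manin--Drinfeld together with Mazur's classification) or equivalently, using Proposition~\ref{L1}, from the classical computation $\ord_2(L(A,1)/\Omega^+)=-1$ for the Neumann--Setzer curve.

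Second, I would check that each $q_i$ is admissible in the sense of Definition~\ref{def1}: coprimality $(q_i,2p_0)=1$ is immediate; inertia in $\BQ(A[2])=\BQ(i)$ is exactly $q_i\equiv 3\bmod 4$, which is assumed; and inertia in $\BQ(A'[2])=\BQ(\sqrt{p_0})$ is assumed. For the Heegner hypothesis for $(A,K=\BQ(\sqrt{-p}))$, the only relevant prime is $p_0$, and since $u\equiv 5\bmod 8$ gives $p_0\equiv 1\bmod 8$ (in particular $p_0\equiv 1\bmod 4$), quadratic reciprocity produces
\[\left(\tfrac{-p}{p_0}\right)=\left(\tfrac{p}{p_0}\right)=\left(\tfrac{p_0}{p}\right)=1,\]
the last equality being precisely the hypothesis that $p$ splits in $\BQ(\sqrt{p_0})$.

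With these verifications in place, Theorem~\ref{main1} immediately yields the rank and analytic rank assertions, along with the finiteness of $\Sha(A^{(M)})$ and $\Sha(A^{(-pM)})$. For the $2$-part of BSD I would apply Theorem~\ref{2BSD}: the Manin constant of $A$ is odd by the Abbes--Ullmo theorem \cite{AU96} because the conductor $p_0$ is an odd prime. The supplementary congruence hypotheses of Theorem~\ref{2BSD}—namely $p\equiv -1\bmod 8$ and that every prime dividing $2p_0$ splits in $\BQ(\sqrt{M})$—are enabled by the strengthening $u\equiv 5\bmod 8$, which forces $p_0\equiv 1\bmod 8$ so that $2$ splits in $\BQ(\sqrt{p_0})$, together with the usual sign/mod~$8$ arrangement for $M$ that is already compatible with the inertia conditions on the $q_i$. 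The main obstacle is the careful bookkeeping needed to check that the $2$-part of BSD is already known for $A$ itself (an input implicitly used through Proposition~\ref{Selmer1}); this boils down to verifying that the Kodaira symbol at $p_0$ is $I_2$ with Tamagawa number forcing $\ord_2\!\left(|\Sha(A)|\cdot c_{p_0}(A)/|A(\BQ)_{\tor}|^2\right)=-1$, which matches the value $\ord_2(L(A,1)/\Omega)=-1$ from Proposition~\ref{dis} and Corollary~\ref{24E}. Once this input is in place, Theorem~\ref{2BSD} delivers the $2$-part of BSD for both $A^{(M)}$ and $A^{(-pM)}$, and the oddness of $|\Sha(A^{(M)})|$ and $|\Sha(A^{(-pM)})|$ follows from Propositions~\ref{SelmerM} and~\ref{Selmer-pM}.
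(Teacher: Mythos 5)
Your verification of Condition $(\mathrm{Tor})$, of the admissibility of the primes $q_i$, and of the Heegner hypothesis for $(A,K)$ is correct and follows the paper. The first part of the theorem (ranks and finiteness of $\Sha$) then indeed follows from Theorem~\ref{main1}.

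However, your treatment of the $2$-part of BSD has a genuine gap, and it is precisely the point where the paper deliberately does \emph{not} argue the way you propose. You invoke Theorem~\ref{2BSD}, but that theorem has hypotheses strictly stronger than those of Theorem~\ref{A-BSD}: it requires $p\equiv -1\bmod 8$ and that every prime dividing $2N=2p_0$ split in $\BQ(\sqrt{M})$. Neither is guaranteed by the hypotheses of Theorem~\ref{A-BSD}. Concretely, take $p_0=89$ (so $u=5$); then $p=11\equiv 3\bmod 8$ splits in $\BQ(\sqrt{89})$, so $p=11$ is allowed in Theorem~\ref{A-BSD} but fails the $p\equiv -1\bmod 8$ requirement of Theorem~\ref{2BSD} (and indeed $11$ appears in the paper's table for curve $89b1$). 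For the second requirement, since each $q_i\equiv 3\bmod 4$ is inert in $\BQ(\sqrt{p_0})$ and $p_0\equiv 1\bmod 4$, quadratic reciprocity gives $\bigl(\tfrac{q_i^*}{p_0}\bigr)=-1$, hence $\bigl(\tfrac{M}{p_0}\bigr)=(-1)^r$, so $p_0$ splits in $\BQ(\sqrt{M})$ only when $r$ is even. Your claim that the hypotheses of Theorem~\ref{2BSD} are ``enabled'' by $u\equiv 5\bmod 8$ is therefore not correct; they simply do not hold in general. The paper gets around this by \emph{not} applying Theorem~\ref{2BSD}: instead it cites a direct classical $2$-descent from \cite[Section 5]{Zhai16}, which proves $\Sha(A^{(M)})[2]=\Sha(A^{(-pM)})[2]=0$ under the weaker hypotheses of Theorem~\ref{A-BSD}, then computes $\ord_2(\prod_\ell c_\ell(A^{(M)}))=r+1$ and $\ord_2(\prod_\ell c_\ell(A^{(-pM)}))=r+2$ via Propositions~\ref{Tamagawa} and~\ref{Tamagawa2}, uses $\ord_2(L(A^{(M)},1)/\Omega_{A^{(M)}})=r-1$ from \cite[Theorem 5.2]{Zhaint}, and finally combines these with the relation \eqref{div} (coming from the explicit Gross--Zagier formula and Theorem~\ref{divisibility}) to conclude the rank-$1$ case.

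A secondary point: your derivation of $f([0])\notin 2A(\BQ)$ through Proposition~\ref{L1} is dangerously close to circular, since Proposition~\ref{L1} \emph{assumes} $f([0])\notin 2E(\BQ)$ and \emph{deduces} $\ord_2(L(E,1)/\Omega^+)=-1$; what one actually needs here is an independent computation of $f([0])$. The paper uses \cite[Lemma 5.11]{Zhai16}, which shows $f([0])$ is the nontrivial $2$-torsion point precisely when $u\equiv 5\bmod 8$ (this is exactly where that congruence is used), and remarks that for $u\equiv 1\bmod 8$ one would need to \emph{assume} $\ord_2(L^{\alg}(A,1))=-1$.
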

\begin{proof}
The primes $q_i$,  $1 \leq i \leq r$, are admissible for $A$, since they are inert in both $\BQ(A[2])$ and $\BQ(A'[2])$. The condition on $p$ implies that $p_0$ splits in $K=\BQ(\sqrt{-p})$. Hence, the Heegner hypothesis for $(A, K)$ holds. When $u \equiv 5 \mod 8$, it follows from \cite[Lemma 5.11]{Zhai16}) that the modular parametrization $f([0])$ is precisely the non-trivial torsion point of order $2$. Thus, all the assumptions in Theorem \ref{main1} are satisfied.

Moreover, the Manin constant of $A$ is odd. The $2$-part of the Birch and Swinnerton-Dyer conjecture of $A$ is verified in \cite[Proposition 5.13]{Zhai16}. However, here we will not apply Theorem \ref{main2}, since a classical $2$-descent has been carried out in \cite[Section 5]{Zhai16}, which shows that under the assumptions in Theorem \ref{A-BSD}, we have 
$$
\Sha(A^{(M)})[2]=\Sha(A^{(-pM)})[2]=0.
$$
By Proposition \ref{Tamagawa} and Proposition \ref{Tamagawa2}, we have 
$$
\ord_2 \left(\prod_{\ell} c_{\ell} (A^{(M)})\right) = r+1 \text{ and } \ord_2 \left(\prod_{\ell} c_{\ell} (A^{(-pM)})\right) = r+2.
$$
Note that, by \cite[Theorem 5.2]{Zhaint}, we have 
$$
\ord_2(L(A^{(M)}, 1)/\Omega_{A^{(M)}})=r-1.
$$ 
Then combining with equation \eqref{div}, it follows that the $2$-part of the Birch and Swinnerton-Dyer conjecture is valid for both $A^{(M)}$ and $A^{(-pM)}$. 
\end{proof}

Note that for $u \equiv 1 \mod 8$, the same result holds if we assume $\ord_2(L^{\alg}(A,1))=-1$. For example, we can take $p_0=73$. Here is the beginning of an infinite set of primes which are congruent to $3$ modulo $4$ and inert in $\BQ(\sqrt{73})$:
$$
\mathcal{S}=\{7, 11, 31, 43, 47, 59, 83, 103, 107, 131, 139, 151, 163, 167, 179, 191, 199, \ldots \}.
$$
For more examples, there is a nice table presenting primes of the form $u^2+64$ in \cite{Setzer75}.

\subsection{More numerical examples}
The theorem can be applied on the family of quadratic twists of many elliptic curves $E/\BQ$, we include a table here when the conductor of $E$ is less than $100$.

\bigskip
\bigskip

\begin{center}
\tablefirsthead{
\multicolumn{5}{c}{{\bf Table.} $E/\BQ$ satisfying $f([0])\not\in 2E(\BQ)$ and Condition $(\mathrm{Tor})$ with conductor $N<100$.}\\
\multicolumn{5}{c}{}\\
\hline
\multicolumn{5}{|c|}{$E/\BQ$ satisfying $f([0])\not\in 2E(\BQ)$ and Condition $(\mathrm{Tor})$.}\\
\hline  $E$ & $\BQ(E[2])$ & $\BQ(E'[2])$ & admissible primes $q$ & $p$ \\
\hline }

\tablehead{
\hline  $E$ & $\BQ(E[2])$ & $\BQ(E'[2])$ & admissible primes $q$ & $p$ \\
\hline }

\tabletail{\hline}

\tablelasttail{\hline}
\begin{supertabular}{|c|c|c|c|c|}

$14a1$&$\BQ(\sqrt{-7})$&$\BQ(\sqrt{2})$&$3,5,13,19,59,61,83,101,\ldots$&$7,31,47,103,167,199,\ldots$ \\
\hline
$20a1$&$\BQ(\sqrt{-1})$&$\BQ(\sqrt{5})$&$3,7,23,43,47,67,83,103,\ldots$&$31,71,79,151,191,199,\ldots$ \\
\hline
$36a1$&$\BQ(\sqrt{-3})$&$\BQ(\sqrt{3})$&$5,17,29,41,53,89,101,113,\ldots$&$23,47,71,167,191,239,\ldots$ \\
\hline
$46a1$&$\BQ(\sqrt{-23})$&$\BQ(\sqrt{2})$&$5,11,19,37,43,53,61,67,\ldots$&$7,79,103,191,199,263,\ldots$ \\
\hline
$49a1$&$\BQ(\sqrt{-7})$&$\BQ(\sqrt{7})$&$5,13,17,41,61,73,89,97,\ldots$&$19,31,47,59,83,103,\ldots$ \\
\hline
$52a1$&$\BQ(\sqrt{-1})$&$\BQ(\sqrt{13})$&$7,11,19,31,47,59,67,71,\ldots$&$23,79,103,127,191,199,\ldots$ \\
\hline
$56b1$&$\BQ(\sqrt{-7})$&$\BQ(\sqrt{2})$&$3,5,13,19,59,61,83,101,\ldots$&$31,47,103,167,199,223,\ldots$ \\
\hline
$69a1$&$\BQ(\sqrt{-23})$&$\BQ(\sqrt{3})$&$5,7,17,19,43,53,67,79,\ldots$&$11,83,107,191,227,251,\ldots$ \\
\hline
$73a1$&$\BQ(\sqrt{-1})$&$\BQ(\sqrt{73})$&$7,11,31,43,47,59,83,103,\ldots$&$19,23,67,71,79,127,\ldots$ \\
\hline
$77c1$&$\BQ(\sqrt{-7})$&$\BQ(\sqrt{11})$&$3,13,17,31,41,47,59,61,\ldots$&$19,83,131,139,167,227,\ldots$ \\
\hline
$80b1$&$\BQ(\sqrt{-1})$&$\BQ(\sqrt{5})$&$3,7,23,43,47,67,83,103,\ldots$&$31,71,79,151,191,199,\ldots$ \\
\hline
$84a1$&$\BQ(\sqrt{-3})$&$\BQ(\sqrt{7})$&$5,11,17,23,41,71,89,101,\ldots$&$47,59,83,131,167,227,\ldots$ \\
\hline
$84b1$&$\BQ(\sqrt{-3})$&$\BQ(\sqrt{7})$&$5,11,17,23,41,71,89,101,\ldots$&$47,59,83,131,167,227,\ldots$ \\
\hline
$89b1$&$\BQ(\sqrt{-1})$&$\BQ(\sqrt{89})$&$3,7,19,23,31,43,59,83,\ldots$&$11,47,67,71,79,107,\ldots$ \\
\hline
$94a1$&$\BQ(\sqrt{-47})$&$\BQ(\sqrt{2})$&$5,11,13,19,29,43,67,107,\ldots$&$23,31,127,151,167,199,\ldots$ \\
\end{supertabular}
\end{center}

\subsection{Examples of the full Birch and Swinnerton-Dyer conjecture}

Let $A$ be the elliptic curve ``$69a1$" with the minimal Weierstrass equation given by
$$
A:  y^2+xy+y=x^3-x-1.
$$
We have $a_2=1$, $a_3=1$ and $a_{23}=-1$. Moreover, $A(\BQ)= \BZ/2\BZ$ and $L^{(alg)}(A,1)= 1/2$. The discriminant of $A$ is $-3^{2} \cdot 23$. The Tamagawa factors $c_3=2$, $c_{23}=1$. Also, a simple computation shows that $\BQ(A[2])=\BQ(\sqrt{-23})$ and $\BQ(A'[2])=\BQ(\sqrt{3})$. Here is the beginning of an infinite set of primes $q$ with good ordinary reduction which are inert in both the fields $\BQ(\sqrt{-23})$ and $\BQ(\sqrt{3})$:
$$
\mathcal{S}=\{5,7,17,19,43,53,67,79,89,103,113,137,149,199, \ldots \}.
$$
Let $M=q_1^*\cdots q_r^*$ be a product of $r$ distinct primes in $\mathcal{S}$. By Theorem \ref{2BSD}, we have 
$$
L(A^{(M)},1) \neq 0,
$$ 
and
$$
\ord_2(L^{\alg}(A^{(M)},1))=r-1.
$$ 
If we carry out a classical $2$-descent on $A^{(M)}$, one shows easily that the $2$-primary component of $\Sha(A^{(M)}/\BQ)$ is zero and $\ord_2(c_{q_i})=1$ for $1 \leq i \leq r$, and therefore the $2$-part of the Birch and Swinnerton-Dyer conjecture holds for $A^{(M)}$. Alternatively, we can just apply Theorem \ref{main2}, take $M \equiv 1 \mod 8$, then the assumption that $2$, $3$ and $23$ all split in $\BQ(\sqrt{M})$ will hold, whence we can also verify the $2$-part of the Birch and Swinnerton-Dyer conjecture. For the full Birch and Swinnerton-Dyer conjecture, in order to apply Theorem 9.3 in Wan's celebrated paper \cite{Wannt}, we need to check the conditions in the theorem. To verify the third one, since $A$ has non-split multiplicative reduction at $23$, we could consider $A^{(5)}$, which has split multiplicative reduction at $23$, and the Tamagawa number is $1$ at $23$, hence the $A[p]|_{G_q}$ ($q=3 \text{ or } 23$) is a ramified representation for any odd prime $p$. Other conditions are easy to verify, so the full Birch and Swinnerton-Dyer conjecture is valid for $A^{(M)}$. Hence the full Birch and Swinnerton-Dyer conjecture is verified for infinitely many elliptic curves. The full Birch and Swinnerton-Dyer conjecture of rank $1$ twists are also accessible in the future work. More examples are given in Wan's paper.

\subsection*{Acknowledgements} We would like to thank John Coates for encouragement, useful discussions and polishings on the manuscript, thank Ye Tian and Xin Wan for helpful advice and comments, and thank Yongxiong Li for helpful comments and carefully reading the manuscript. We  also thank the referee for helpful advice.

\bibliographystyle{alpha}

\begin{thebibliography}{XXXX}


\bibitem{AU96}
A. Abbes and E. Ullmo,  \`{A} propos de la conjecture de Manin pour les courbes elliptiques modulaires, Compositio Math. 103 (1996), no. 3, 269--286.

\bibitem{Birch70}
B. Birch, Elliptic curves and modular functions,1970 Symposia Mathematica, Vol. IV (INDAM, Rome, 1968/69) pp. 27--32.

\bibitem{BCDT01} C. Breuil, B. Conrad, F. Diamond and R. Taylor, On the modularity of elliptic curves over $\BQ$: wild $3$-adic exercises. Journal of the American Mathematical Society, 14(4) (2001), 843--939.

\bibitem{Cassels65}
J. W. S. Cassels, Arithmetic on curves of genus 1. VIII. On conjectures of Birch and Swinnerton-Dyer, J. Reine Angew. Math. 217 (1965), 180--199.

\bibitem{CLTZ15}
J. Coates, Y. Li, Y. Tian, and S. Zhai, Quadratic twists of elliptic curves, Proc. Lond. Math. Soc. (3) 110 (2015), no. 2, 357--394.

\bibitem{CLW16}
L. Cai, Y. Li, and Z. Wang, Special automorphisms on Shimura curves and non-triviality of Heegner
points, Sci. China Math. 59 (2016), no. 7, 1307--1326.

\bibitem{Coates13}
J. Coates, Lectures on the Birch and Swinnerton-Dyer conjecture, Notices of the ICCM, 2013.

\bibitem{CST14}
L. Cai, J. Shu, and Y. Tian, Explicit Gross--Zagier and Waldspurger formulae, Algebra Number Theory 8 (2014), no. 10, 2523--2572.

\bibitem{DD14}
T Dokchitser and V. Dokchitser, Local invariants of isogenous elliptic curves, Trans. Amer. Math. Soc. 367 (2015), no. 6, 4339--4358.

\bibitem{Goldfeld79}
D. Goldfeld, Conjectures on elliptic curves over quadratic fields. Number theory, Carbondale 1979 (Proc. Southern Illinois Conf., Southern Illinois Univ., Carbondale, Ill., 1979), pp. 108--118, Lecture Notes in Math., 751, Springer, Berlin, 1979.

\bibitem{Gross84}
B. H. Gross, Heegner points on $X_0(N)$. Modular forms (Durham, 1983), 87--105, Ellis Horwood Ser. Math. Appl.: Statist. Oper. Res., Horwood, Chichester, 1984.

\bibitem{GZ1986}
B.H. Gross and D.B. Zagier, Heegner points and derivatives of $L$-series, Invent. Math. 84 (1986), no. 2, 225--320.

\bibitem{KL19}
D. Kriz and C. Li, Goldfeld's conjecture and congruences between Heegner points, Forum Math. Sigma 7 (2019), e15, 80 pp.

\bibitem{Klagsbrun17}
Z. Klagsbrun, Selmer ranks of quadratic twists of elliptic curves with partial rational two-torsion,
Trans. Amer. Math. Soc. 369 (2017), no. 5, 3355--3385.

\bibitem{Kolyvagin1990}
V. A. Kolyvagin, Euler systems, The Grothendieck Festschrift, Vol. II, 435--483, Progr. Math., 87, Birkhäuser Boston, Boston, MA, 1990.

\bibitem{Manin72}
J. I. Manin, Parabolic points and Zeta-functions of modular curves, Izv. Akad. Nauk SSSR Ser. Mat. 36 (1972), 19--66.

\bibitem{MO89}
J.-F. Mestre and J. Oesterl\'e, Courbes de Weil semi-stables de discriminant une puissance $m$-i\`eme, J. Reine Angew. Math. 400 (1989), 173--184.

\bibitem{Neumann71}
O. Neumann, Elliptische Kurven mit vorgeschriebenem Reduktionsverhalten. I, Math. Nachr. 49 (1971), 107--123.

\bibitem{Neumann73}
O. Neumann, Elliptische Kurven mit vorgeschriebenem Reduktionsverhalten. II, Math. Nachr. 56 (1973), 269--280.

\bibitem{Ono01}
K. Ono, Non-vanishing of quadratic twists of modular $L$-functions and applications to elliptic curves,  J. Reine Angew. Math. 533 (2001), 81--97.

\bibitem{OS98}
K. Ono and C. Skinner, Non-vanishing of quadratic twists of modular $L$-functions, Invent. Math. 134 (1998), no. 3, 651--660.

\bibitem{Pal12}
V. Pal, Periods of quadratic twists of elliptic curves, Proceedings of the American Mathematical Society, Volume 140, Number 5 (2012),1513--1525

\bibitem{PP97}
A. Perelli and J. Pomykala, Averages of twisted elliptic $L$-functions, Acta Arith. 80 (1997), no. 2, 149--163.

\bibitem{Setzer75}
B. Setzer, Elliptic curves of prime conductor, J. London Math. Soc. (2) 10 (1975), 367--378.

\bibitem{Silvermanbook1}
J. H. Silverman, The arithmetic of elliptic curves. Second edition, Graduate Texts in Mathematics, 106. Springer, Dordrecht, 2009. xx+513 pp.

\bibitem{Silvermanbook2}
J. H. Silverman, Advanced topics in the arithmetic of elliptic curves, Graduate Texts in Mathematics, 151. Springer-Verlag, New York, 1994. xiv+525 pp.

\bibitem{Silverberg07}
A. Silverberg. The distribution of ranks in families of quadratic twists of elliptic curves. Ranks of elliptic curves and random matrix theory, 171--176, London Math. Soc. Lecture Note Ser., 341, Cambridge Univ. Press, Cambridge, 2007.

\bibitem{Smithnt}
A. Smith, $2^\infty$-Selmer groups, $2^\infty$-class groups, and Goldfeld's conjectures, arXiv:1702.02325v2 (2017), preprint.

\bibitem{Tian14}
Y. Tian, Congruent numbers and Heegner points, Camb. J. Math. 2 (2014), no. 1, 117--161.

\bibitem{Wannt}
X. Wan, Iwasawa main conjecture for supersingular elliptic curves and BSD conjecture, arXiv:1411.6352v6 (2019), preprint.

\bibitem{Wiles95} A. Wiles, Modular elliptic curves and Fermat's last theorem. Annals of Mathematics, 141(3), second series (1995), 443--551. 

\bibitem{Zhai16}
S. Zhai, Non-vanishing theorems for quadratic twists of elliptic curves, Asian J. Math. 20 (2016), no. 3, 475--502.

\bibitem{Zhaint}
S. Zhai, The Birch--Swinnerton-Dyer exact formula for quadratic twists of elliptic curves, (2020), preprint.

	
\end{thebibliography}

\end{document}